\newtheorem{theorem}{Theorem}[section]
\newtheorem{proposition}[theorem]{Proposition}
\newtheorem{lemma}[theorem]{Lemma}
\newtheorem{corollary}[theorem]{Corollary}
\newtheorem{construction}[theorem]{Construction}
\newtheorem{definition}[theorem]{Definition}
\newtheorem{example}[theorem]{Example}
\newtheorem{observation}[theorem]{Observation}
\DeclareMathOperator{\supp}{\mathrm{supp}}
\newcommand{\N}{\mathbb{N}} 
\newcommand{\Z}{\mathbb{Z}}
\newcommand{\C}{\mathcal{C}}
\newcommand{\R}{\mathbb{R}}
\newcommand{\conv}{\mathop{\rm conv}\nolimits}
\newcommand{\Pc}{\mathcal{P}}
\newcommand{\ESP}{\mathrm{ESP}}
\newcommand{\CBP}{\mathrm{CBP}}
\newcommand{\CSP}{\mathrm{CSP}}
\newcommand{\TECSP}{\mathrm{TECSP}}
\renewcommand{\P}{\mathcal{P}}
\newcommand{\K}{\mathcal{K}}
\newcommand{\G}{\mathcal{G}}
\newcommand{\I}{\mathcal{I}}
\newcommand{\T}{\mathcal{T}}
\newcommand{\B}{\mathcal{B}}
\newcommand{\A}{\mathcal{A}}
\renewcommand{\H}{\mathcal{H}}
\newcommand{\F}{\mathcal{F}}
\newcommand{\init}{\mathrm{in}}
\newcommand{\rev}{\mathrm{rev}}
\newcommand{\Xr}[1]{}
\newcommand{\Xn}[1]{{\protect#1}}
\title{On the connected blocks polytope}
\author{Justus Bruckamp, Markus Chimani, Martina Juhnke}
\address{Faculty of Mathematics/Computer Science, University of Osnabr\"{u}ck, Germany}
\email{justus.bruckamp@uni-osnabrueck.de}
\email{markus.chimani@uni-osnabrueck.de}
\email{martina.juhnke@uni-osnabrueck.de}
\begin{document}

\maketitle
\begin{abstract}
In this paper, we study the connected blocks polytope, which, apart from its own merits, can be seen as the generalization of certain connectivity based or Eulerian subgraph polytopes. We provide a complete facet description of this polytope, characterize its edges and show that it is Hirsch. We also show that connected blocks polytopes admit a regular unimodular triangulation by constructing a squarefree Gr\"obner basis. In addition, we prove that the polytope is Gorenstein of index $2$ and that its $h^\ast$-vector is unimodal.
\end{abstract}

\section{Introduction}
We study the \emph{connected blocks polytope} $\CBP(G)$ of a connected graph $G$, which is closely related to several polytopes from literature. Our study generalizes some known results about these polytopes and also considers new aspects that have not been studied so far at all. Our aim is to understand connectivity in a polyhedral way and we are interested in finding sets of blocks of a graph that induce a connected subgraph. This generalizes the problem of finding a maximum weighted subtree of a weighted tree. \Xn{For the exact definitions of the following mentioned related polytopes, as well as of the connected blocks polytope and their connections to each other, see \Cref{sectionConsideredPolytopes}. All of theses polytopes are defined over a graph or, more general, a matroid, and most of them coincide if the underlying graph is a bridgeless \emph{cactus graph}, i.e., a graph in which two cycles have at most one vertex in common.}

In \cite{DKN15}, the \textit{connected subgraph polytope} $\CSP(G)$ of a graph $G$ is studied, where a complete inequality description for the connected subgraph polytope of a tree is provided. \Xn{As we will see in \Cref{proposition connection}, the connected subgraph polytope and the connected blocks polytope coincide if the underlying graph is a tree. In \Cref{completefacets}, we provide a complete facet description for the connected blocks polytope, generalizing the aforementioned result in \cite{DKN15}.}\Xr{Furthermore, the connected blocks polytope of a Eulerian cactus graph is unimodular equivalent to its corresponding \textit{Eulerian subgraph polytope} and to its \textit{2-edge-connected subgraph polytope}.} 

In literature, the \Xr{Eulerian subgraph polytope}\Xn{\emph{Eulerian subgraph polytope}} mostly appears as a special case of the \textit{cycle polytope} of a binary matroid, e.g., see \cite{cyclepolytope}. In this case the considered subgraphs have components that are Eulerian but these subgraphs are not necessarily connected. To the best of our knowledge, the actual Eulerian subgraph polytope -- where only Eulerian subgraphs are considered -- has not been studied so far. This can be of interest, e.g., for optimization purposes such as finding a minimum/maximum weighted Eulerian subgraph in a weighted graph $G$. Our research covers the case that the underlying graph is a Eulerian cactus graph. In this case the cycle polytope would be given by the unit cube of some dimension, but the connected blocks polytope is more complicated \Xn{and is affinely isomorphic to the actual Eulerian subgraph polytope} in the considered case\Xn{, see \Cref{proposition connection}}. In \cite{cyclepolytope}, a complete facet description of the cycle polytope of a binary matroid without some forbidden minors is given. Furthermore, the polyhedral edges are characterized and it is shown that the cycle polytope is Hirsch if the binary matroid does not have a special minor. \Xn{We obtain similar results for the connected blocks polytope which directly implies that same results for the actual Eulerian subgraph polytope hold if the underlying graph is a Eulerian cactus graph.}

The \Xr{$2$-edge-connected subgraph polytope}\Xn{\emph{$2$-edge-connected subgraph polytope}} has mostly been studied with the additional condition that the considered subgraphs are spanning subgraphs \cite{2connected1,2connected}. We want to look at the case where all $2$-edge-connected subgraphs are allowed. Therefore consider the case where the underlying graph is a $2$-edge-connected cactus graph. In this case the spanning $2$-edge-connected subgraph polytope has dimension zero since the only spanning $2$-edge-connected subgraph is the graph $G$ itself. The $2$-edge-connected subgraph polytope, as we will consider it, however, is more complex \Xn{and is affinely isomorphic to the connected blocks polytope, if the underlying graph is a $2$-edge-connected cactus graph, see \Cref{proposition connection}}. In \cite{2connected1,2connected}, several facet-defining inequalities for the spanning $2$-edge-connected subgraph polytope are studied. Furthermore, a complete inequality description is provided for some classes of graphs such as series-parallel graphs and Halin graphs. \Xn{Since we also provide a complete inequality description of the connected blocks polytope, see \Cref{completefacets}, this directly implies the same result for the $2$-edge-connected subgraph polytope, if the underlying graph is a $2$-edge-connected cactus graph.}

There exists no study of Ehrhart theoretical aspects for any of the above mentioned polytopes so far.

\subsubsection*{\bf Our Contribution and Organization}
In \Cref{sectionPreliminaries}, we give all the needed preliminaries on graph theory and introduce some notations. In \Cref{sectionConsideredPolytopes}, we formally define the connected blocks polytope and explain its relation to other polytopes of interest. We then start with some basic results on the connected blocks polytope in \Cref{sectionPolyhedralWarmUp}, where we determine the dimension of $\CBP(G)$ and characterize when connected blocks polytopes are simple and simplicial, respectively. Afterwards we study the facets in \Cref{sectionFacets}, where we give a complete facet description of $\CBP(G)$ in \Cref{completefacets}. This also yields a complete facet description for the Eulerian ($2$-edge-connected) subgraph polytope of Eulerian ($2$-edge-connected, respectively) cactus graphs and in particular generalizes the mentioned main result of \cite{DKN15}. In \Cref{sectionEdges}, motivated by \cite{cyclepolytope}, we characterize the polyhedral edges of connected blocks polytopes and show that the polytopes are Hirsch. In \Cref{sectionUnimodularTriangulations}, we initiate the study of Ehrhart theoretical aspects of connected blocks polytopes. We show that connected blocks polytopes admit a regular unimodular triangulation and are Gorenstein, which implies unimodality of its $h^\ast$-vector. Thus, these polytopes are not only interesting from an optimization point of view but also from a discrete geometry and Ehrhart theoretical point of view. We give the Ehrhart theoretical preliminaries in the beginning of that section since there is no knowledge on Ehrhart theory needed in the rest of the paper.

\section{Preliminaries}

\label{sectionPreliminaries}

By $G$ we denote an undirected graph with vertex set $V(G)$ and edge~set~$E(G)$. A connected graph $G$ is \emph{$2$-connected} (or \emph{biconnected}) if $\vert V(G) \vert \ge 3$ and $G-v$ is connected for every vertex $v \in V(G)$. Here, $G-v$ denotes the graph where $v$ and all its incident edges are deleted from~$G$. An edge $e \in E(G)$ is called a \emph{bridge} if the graph $G-e$, given by the graph with vertex set $V(G)$ and edge set $E(G) \setminus \{e\}$, has more connected components than $G$. A \emph{block} of $G$ is a maximal $2$-connected subgraph of $G$ or a bridge together with its two incident vertices, and by $\B$ we denote the set of blocks of $G$. If $H \subseteq G$ is a subgraph of $G$ whose blocks are contained in $\B$, we write $\B[H]$ for the set of blocks of $H$. For a subset of the blocks $\A \subseteq \B$ we denote by $G[\A]$ the graph that is induced by $\A$, i.e., $G[\A] \coloneqq \bigcup_{B \in \A}B$, and  with $\B \langle \A \rangle \subseteq \B$ we denote the smallest set of blocks of $\B$ that induces a connected subgraph of $G$ containing $\A$. A vertex $v \in V(G)$ is a \emph{cut vertex} if $G-v$ has more components than $G$ and by $X \subseteq V(G)$ we denote the set of cut vertices of~$G$. Note that a connected graph on at least three vertices is $2$-connected if and only if it has no cut vertices. For a connected graph $G$ its \emph{block-cut tree}, denoted by $\T\coloneqq \T(G)$, is the tree with vertex set $V(\T)=\B\cup X$ and edge set $E = \left\{ \{B,v\} \mid B \in \B, \ v \in X \cap V(B) \right\}$\Xn{, see \Cref{fig:Example} for an example}. Note that the block-cut tree is indeed a tree\footnote{\Xn{The connectivity is immediate. Suppose there is a cycle in $\T(G)$. Since all edges of $\T(G)$ are between $X$ and $\B$, there are at least two cut vertices $v_1,v_2 \in X$, that belong to this cycle. But then $v_1$ and $v_2$ also lie in a common cycle in $G$, which is a contradiction.}}. We say that $G$ is a \emph{block path of length $n$} if $G$ has $n$ blocks $\B=\{B_1,\ldots,B_n\}$ such that $B_i$ and $B_{i+1}$ have a common vertex for $i \in [n-1]$, where $[\ell]$ denotes the set $\{1,\ldots,\ell\}$, and no other pair of blocks shares a common vertex. Equivalently, $G$ is a block path if and only if its block-cut tree $\T$ is a path. A \emph{cycle} in $G$ is a subgraph $C \subseteq G$ with pairwise distinct vertices $v_1,\ldots,v_n$, $n \ge 3$, and edge set $E(C)=\{v_iv_{i+1 \bmod n} \mid i \in [n]\}$. A graph $G$ is called a \emph{cactus graph} if any two cycles have at most one vertex in common. A graph $G$ is called \emph{Eulerian} if there exists a Eulerian tour in $G$, i.e., a route starting and ending at the same vertex traversing every edge of $G$ exactly once. Equivalently, a graph is Eulerian if and only if it is connected (except for possibly some isolated vertices) and every vertex has even degree. Observe that a Eulerian cactus graph is precisely one without any bridges and only one component containing edges.

\begin{figure}
    \centering
    \begin{tikzpicture}
        \coordinate[label=center:$G$] (A) at (-1,1);
    
        \draw[red,thick] (0,0) -- (1,1);
        \draw[red,thick] (0,0) -- (2,0);
        \draw[red,thick] (2,0) -- (1,1);
        \draw[blue,thick] (2,0) -- (1,-1);
        \draw[blue,thick] (2,0) -- (3,-1);
        \draw[blue,thick] (1,-1) -- (3,-1);
        \draw[blue,thick] (2,0) -- (1.5,-1);
        \draw[blue,thick] (2,0) -- (2,-1);
        \draw[blue,thick] (2,-0.5) -- (3,-1);
        \draw[green!80!black,thick] (2,0) -- (3,1);
        \draw[green!80!black,thick] (2,0) -- (4,0);
        \draw[green!80!black,thick] (3,1) -- (4,0);
        \draw[green!80!black,thick] (3,1) -- (3,0);
        \draw[orange,thick] (4,0) -- (4,-1);
        \draw[violet,thick] (4,0) -- (5.5,0);
        \draw[yellow!80!black,thick] (5.5,0) -- (6.5,1);
        \draw[yellow!80!black,thick] (5.5,0) -- (6.5,-1);
        \draw[yellow!80!black,thick] (7.5,0) -- (6.5,1);
        \draw[yellow!80!black,thick] (7.5,0) -- (6.5,-1);
        \draw[yellow!80!black,thick] (6.5,1) -- (6.5,-1);
        \draw (6.5,-1)[brown,thick] -- (5.5,-2);
        \draw (6.5,-1)[brown,thick] -- (7.5,-2);
        \draw (7.5,-2)[brown,thick] -- (5.5,-2);
        \draw[teal,thick] (7.5,0) -- (9,0);

        \fill (0,0) circle (2pt);
        \fill (2,0) circle (2pt);
        \coordinate[label=above:$v_1$] (D) at (2,0);
        \fill (1,-1) circle (2pt);
        \fill (1,1) circle (2pt);
        \fill (4,0) circle (2pt);
        \coordinate[label=above:$v_2$] (D) at (4,0);
        \fill (3,1) circle (2pt);
        \fill (3,-1) circle (2pt);
        \fill (6.5,1) circle (2pt);
        \fill (6.5,-1) circle (2pt);
        \coordinate[label=left:$v_4$] (D) at (6.5,-1);
        \fill (7.5,0) circle (2pt);
        \coordinate[label=above:$v_5$] (D) at (7.5,0);
        \fill (9,0) circle (2pt);
        \fill (5.5,-2) circle (2pt);
        \fill (7.5,-2) circle (2pt);
        \fill (1.5,-1) circle (2pt);
        \fill (2,-1) circle (2pt);
        \fill (2,-0.5) circle (2pt);
        \fill (5.5,0) circle (2pt);
        \coordinate[label=above:$v_3$] (D) at (5.5,0);
        \fill (3,0) circle (2pt);
        \fill (4,-1) circle (2pt);

    \end{tikzpicture}
    
    \bigskip

        \begin{tikzpicture}[scale=1.2]
    \begin{scope}[yscale=1,xscale=-1,shift={(-8.5cm,0cm)}]
        \coordinate (O) at (150:0.5);
        \coordinate (P) at (150:0.5);
        \coordinate (B) at (0.25,0.433);
        \coordinate (C) at (0.25,-0.433);
        
        \fill[green!80!black!10,line width=0.5mm] (-1,0) .. controls ($(-1,0)+(45:0.95)$) and ($(0,0)+(100:0.4)$) .. (0,0);
        \fill[green!80!black!10,line width=0.5mm] (-1,0) .. controls ($(-1,0)+(-45:0.95)$) and ($(0,0)+(-100:0.4)$) .. (0,0);
        
        \draw[green!80!black,line width=0.5mm] (-1,0) .. controls ($(-1,0)+(45:0.95)$) and ($(0,0)+(100:0.4)$) .. (0,0);
        \draw[green!80!black,line width=0.5mm] (-1,0) .. controls ($(-1,0)+(-45:0.95)$) and ($(0,0)+(-100:0.4)$) .. (0,0);

        \fill[red!10,line width=0.5mm] (0,0) .. controls (105:0.95) and ($2*(B)+(150:0.4)$) .. ($2*(B)$);
        \fill[red!10,line width=0.5mm] (0,0) .. controls (15:0.95) and ($2*(B)-(150:0.4)$) .. ($2*(B)$);
        
        \draw[red,line width=0.5mm] (0,0) .. controls (105:0.95) and ($2*(B)+(150:0.4)$) .. ($2*(B)$);
        \draw[red,line width=0.5mm] (0,0) .. controls (15:0.95) and ($2*(B)-(150:0.4)$) .. ($2*(B)$);

        \fill[blue!10,line width=0.5mm] (0,0) .. controls (-15:0.95) and ($2*(C)-(210:0.4)$) .. ($2*(C)$);
        \fill[blue!10,line width=0.5mm] (0,0) .. controls (255:0.95) and ($2*(C)+(210:0.4)$) .. ($2*(C)$);
        
        \draw[blue,line width=0.5mm] (0,0) .. controls (-15:0.95) and ($2*(C)-(210:0.4)$) .. ($2*(C)$);
        \draw[blue,line width=0.5mm] (0,0) .. controls (255:0.95) and ($2*(C)+(210:0.4)$) .. ($2*(C)$);
        
        \fill[violet!10, line width=0.5mm] (-1,0) .. controls ($(-1,0)+(135:0.95)$) and ($(-2,0)+(85:0.4)$) .. (-2,0);
        \fill[violet!10, line width=0.5mm] (-1,0) .. controls ($(-1,0)+(-135:0.95)$) and ($(-2,0)-(85:0.4)$) .. (-2,0);
        
        \draw[violet, line width=0.5mm] (-1,0) .. controls ($(-1,0)+(135:0.95)$) and ($(-2,0)+(85:0.4)$) .. (-2,0);
        \draw[violet, line width=0.5mm] (-1,0) .. controls ($(-1,0)+(-135:0.95)$) and ($(-2,0)-(85:0.4)$) .. (-2,0);

        \fill[orange!10, line width=0.5mm] (-1,0) .. controls ($(-1,0)+(230:0.95)$) and ($(-1,-1)+(180:0.4)$) .. (-1,-1);
        \fill[orange!10, line width=0.5mm] (-1,0) .. controls ($(-1,0)+(-50:0.95)$) and ($(-1,-1)+(0:0.4)$) .. (-1,-1);
        
        \draw[orange, line width=0.5mm] (-1,0) .. controls ($(-1,0)+(230:0.95)$) and ($(-1,-1)+(180:0.4)$) .. (-1,-1);
        \draw[orange, line width=0.5mm] (-1,0) .. controls ($(-1,0)+(-50:0.95)$) and ($(-1,-1)+(0:0.4)$) .. (-1,-1);

        \fill[yellow!80!black!10,line width=0.5mm] (-2.5,0) circle (0.5);
        \fill[teal!10,line width=0.5mm] (-3.5,0) circle (0.5);
        \fill[brown!10,line width=0.5mm] (-2.5,-1) circle (0.5);
        
        \draw[yellow!80!black,line width=0.5mm] (-2.5,0) circle (0.5);
        \draw[teal,line width=0.5mm] (-3.5,0) circle (0.5);
        \draw[brown,line width=0.5mm] (-2.5,-1) circle (0.5);
        \fill (0,0) circle (2pt);
        \fill (-1,0) circle (2pt);
        \fill (-2,0) circle (2pt);
        \fill (-3,0) circle (2pt);
        \fill (-2.5,-0.5) circle (2pt);
        \coordinate[label=center:\small{$B_1$}] (A1) at (B);
        \coordinate[label=center:\small{$B_6$}] (A1) at (C);
        \coordinate[label=center:\small{$B_2$}] (A1) at (-0.5,0);
        \coordinate[label=center:\small{$B_3$}] (A1) at (-1.5,0);
        \coordinate[label=center:\small{$B_4$}] (A1) at (-2.5,0);
        \coordinate[label=center:\small{$B_5$}] (A1) at (-3.5,0);
        \coordinate[label=center:\small{$B_8$}] (A1) at (-2.5,-1);
        \coordinate[label=center:\small{$B_7$}] (A1) at (-1,-0.5);
        \end{scope}
    \end{tikzpicture}
    
    \bigskip
    
    \begin{tikzpicture}
        \coordinate[label=center:$\T(G)$] (A) at (-2,1);

        \coordinate (A) at (45:0.13);
    
        \draw (-0.7,0.7) -- (0,0);
        \draw (-0.7,-0.7) -- (0,0);
        \draw (0,0) -- (1,0);
        \draw (1,0) -- (2,0);
        \draw (2,0) -- (2,-0.7);
        \draw (2,0) -- (3,0);
        \draw (3,0) -- (4,0);
        \draw (4,0) -- (5,0);
        \draw (5,0) -- (5.7,0.7);
        \draw (5,0) -- (5.7,-0.7);
        \draw (5.7,0.7) -- (6.7,0.7);
        \draw (5.7,-0.7) -- (6.7,-0.7);
        
        \fill[red] ($(-0.7,0.7)+(A)$) rectangle ($(-0.7,0.7)-(A)$);
        \coordinate[label=above:$B_1$] (D) at (-0.7,0.7);
        \fill[blue] ($(-0.7,-0.7)+(A)$) rectangle ($(-0.7,-0.7)-(A)$);
        \coordinate[label=below:$B_6$] (D) at (-0.7,-0.7);
        \fill (0,0) circle (2pt);
        \coordinate[label=above:$v_1$] (D) at (0,0);
        \fill[green!80!black] ($(1,0)+(A)$) rectangle ($(1,0)-(A)$);
        \coordinate[label=below:$B_2$] (D) at (1,0);
        \fill (2,0) circle (2pt);
        \coordinate[label=above:$v_2$] (D) at (2,0);
        \fill[orange] ($(2,-0.7)+(A)$) rectangle ($(2,-0.7)-(A)$);
        \coordinate[label=below:$B_7$] (D) at (2,-0.7);
        \fill[violet] ($(3,0)+(A)$) rectangle ($(3,0)-(A)$);
        \coordinate[label=below:$B_3$] (D) at (3,0);
        \fill (4,0) circle (2pt);
        \coordinate[label=above:$v_3$] (D) at (4,0);
        \fill[yellow!80!black] ($(5,0)+(A)$) rectangle ($(5,0)-(A)$);
        \coordinate[label=below:$B_4$] (D) at (5,0);
        \fill (5.7,0.7) circle (2pt);
        \coordinate[label=above:$v_5$] (D) at (5.7,0.7);
        \fill (5.7,-0.7) circle (2pt);
        \coordinate[label=below:$v_4$] (D) at (5.7,-0.7);
        \fill[teal] ($(6.7,0.7)+(A)$) rectangle ($(6.7,0.7)-(A)$);
        \coordinate[label=above:$B_5$] (D) at (6.7,0.7);
        \fill[brown] ($(6.7,-0.7)+(A)$) rectangle ($(6.7,-0.7)-(A)$);
        \coordinate[label=below:$B_8$] (D) at (6.7,-0.7);
    \end{tikzpicture}
    \caption{Example for a graph $G$, its block structure, and its block-cut tree $\T(G)$.}
    \label{fig:Example}
\end{figure}

\section{Considered polytopes}

\label{sectionConsideredPolytopes}

\subsection{Connected Blocks Polytope}

Let $G$ be a connected graph and $\B$ its set of blocks. For a subset of the blocks $\A \subseteq \B$ we define the incidence vector $\chi^\A \in \{0,1\}^{\B}$ by
\begin{align*}
    \chi_B^\A\coloneqq \begin{cases} 1, \text{ if } B \in \A, \\
                            0, \text{ otherwise}.\end{cases}
\end{align*}
We are interested in finding subsets of blocks that induce a connected graph. We define the \textit{connected blocks polytope} of $G$ by
\begin{align*}
    \CBP(G) \coloneqq \conv\{\chi^\A \mid \A \subseteq \B, \ G[\A] \text{ is connected} \}.
\end{align*}
We also consider the empty graph to be connected and hence, $\chi^\emptyset=\mathbf{0} \in \CBP(G)$. Given the block-cut tree $\T$ of $G$ and $\A \subseteq \B \subseteq V(\T)$, we let $\T\langle\A\rangle \subseteq \T$ denote the smallest subtree containing $\A$. Then $G[\A]$ is connected if and only if $V(\T\langle\A\rangle) \cap \B=\A$. Thus, we get
\begin{align*}
    \CBP(G) = \conv\{\chi^\A \mid \A \subseteq \B, \ V(\T\langle\A\rangle) \cap \B=\A \}.
\end{align*}
In the following, we will show that $\CBP(G)$ is closely related to several polytopes from literature.

\subsection{Connected Subgraph Polytope}

The \textit{connected subgraph polytope} $\CSP(G)$ of a graph~$G$ is defined as
\begin{align*}
    \CSP(G) \coloneqq \conv\{\chi^{E(H)} \mid H \subseteq G \text{ is connected} \},
\end{align*}
where $\chi^{E(H)} \in \{0,1\}^{E(G)}$ for a subgraph $H \subseteq G$ is defined as
\begin{align*}
         \chi^{E(H)}_e\coloneqq \begin{cases} 1, \text{ if } e \in E(H), \\
                               0, \text{ otherwise}.\end{cases}
\end{align*}
If $T$ is a tree, every block of $T$ consists of a single edge $e \in E(T)$ together with its incident vertices. We thus have a one-to-one correspondence between the blocks of $T$ and the edges of $T$ and it directly follows that
\begin{align*}
    \CBP(T)=\CSP(T)
\end{align*}
for every tree $T$. Hence, every connected subgraph polytope of a tree is a connected blocks polytope. The converse is not true since in a tree every block contains at most two cut vertices, while this does not hold for arbitrary graphs.

In \cite{DKN15}, a complete inequality description is provided for $\CSP(G)$ in the case that $G$ is a tree, a cycle, or has no matching of cardinality three. In \Cref{completefacets}, we generalize the first case (\cite[Theorem 6]{DKN15}) by giving a complete facet description for $\CBP(G)$ for general graphs, i.e., the connected blocks polytopes that are not necessarily given by the connected subgraph polytope of a tree.

\subsection{Eulerian Subgraph Polytope}

For a graph $G$ the \textit{Eulerian subgraph polytope} is defined as
\begin{align}
    \label{defESP}
    \mathrm{ESP}(G) \coloneqq \conv\{ \chi^{E(H)} \mid H \subseteq G \text{ is Eulerian} \}.
\end{align}
In the literature, the term Eulerian subgraph polytope is also used for the cycle polytope of a graphic matroid; see for example \cite{cyclepolytope} for a definition and more details. However, that definition also allows subgraphs that have multiple non-trivial components (each of which is Eulerian). Thus those graphs are not really Eulerian and the naming is misleading. Our definition in \eqref{defESP} is thus different but arguably more sound in terms of graph theory.

Suppose now that $G$ is a connected Eulerian cactus graph. Then the blocks of $G$ are exactly the cycles of $G$. Given a cycle $C$ of $G$, every Eulerian subgraph $H$ of $G$ either contains all edges of $C$ or none, since otherwise there would exist a bridge in $H$. Hence, $x_e=x_f$ for all $x \in \mathrm{ESP}(G)$ and all $e,f \in E(C)$. Thus it is not necessary to have one variable per edge in $G$, but it suffices to have one variable for each cycle in $G$. For a Eulerian cactus graph $G$, let $\C \coloneqq \C(G)$ denote the set of cycles of $G$. The previous discussion gives rise to what we call the \textit{projected Eulerian subgraph polytope} of a Eulerian cactus graph $G$. Namely,
\begin{align*}
    \mathrm{ESP}_p(G)\coloneqq \conv\{\chi^{\C(H)} \mid H \subseteq G \text{ Eulerian} \},
\end{align*}
where $\chi^{\C(H)} \in \{0,1\}^\C$ is defined by
\begin{align*}
    \chi^{\C(H)}_C \coloneqq \begin{cases} 1, \text{ if } C \in \C(H), \\
                                   0, \text{ otherwise.}\end{cases}
\end{align*}
By the above discussion, the two polytopes $\ESP_p(G)$ and $\ESP(G)$ are unimodular equivalent. Since every block of a Eulerian cactus graph is a cycle and every Eulerian subgraph of a Eulerian cactus graph is precisely a subset of cycles whose union is connected, it directly follows that
\begin{align*}
    \CBP(G)=\ESP_p(G)
\end{align*}
for every Eulerian cactus graph $G$.

As already mentioned, the cycle polytope of a graphic matroid has been studied in literature. \cite{cyclepolytope} provides a nonredundant linear system of inequalities describing the cycle polytope of a graphic matroid. If $G$ is a graph such that every subgraph whose components are Eulerian already is connected except possibly some isolated vertices, the cycle polytope and the Eulerian subgraph polytope defined in \eqref{defESP} coincide and Theorem~4.23 of \cite{cyclepolytope} is true in this case. Note that this is only the case if and only if all two edge-disjoint cycles of $G$ share a common vertex. To the best of our knowledge, there is no work on the Eulerian subgraph polytope as defined in \eqref{defESP}. We start to close this gap by considering the case that the underlying graph is a Eulerian cactus graph. Note that the cycle polytope of a Eulerian cactus graph would be unimodular equivalent to the trivial unit cube $Q_n\coloneqq [0,1]^n$ of dimension~$n$, where $n$ is given by the number of cycles of $G$. In our case, however, we typically deal with strict subpolytopes of $Q_n$.

\subsection{2-edge-connected Subgraph Polytope}

We say that $G$ is \textit{$2$-edge-connected} if $G$ is connected and contains no bridges. We define the \textit{$2$-edge-connected subgraph polytope} by
\begin{align}
    \label{defTECSP}
    \mathrm{TECSP}(G) \coloneqq \conv\{ \chi^{E(H)} \mid H \subseteq G \text{ is } 2\text{-edge-connected} \}.
\end{align}
In the literature, the $2$-edge-connected subgraph polytope is typically only considered with the additional restriction that the subgraphs need to be spanning, see \cite{2connected}, i.e., the subgraphs must contain all vertices of the underlying graph. As there is no work on the $2$-edge-connected subgraph polytope as defined in \eqref{defTECSP}, we start to close this gap by considering the case that the underlying graph is a $2$-edge-connected cactus graph. Note that the spanning $2$-edge-connected subgraph polytope of a $2$-edge-connected cactus graph $G$ would be $\{\chi^{E(G)}\}=\{\mathbf{1}\}$, since $G$ is the only spanning $2$-edge-connected subgraph of $G$. In our case, however, the polytope is much more interesting.

Suppose that $G$ is a $2$-edge-connected cactus graph. By definition a connected cactus graph is $2$-edge-connected if it contains no bridges, i.e., if and only if it is Eulerian. Analogously to the Eulerian subgraph polytope of a Eulerian cactus graph we observe that for $2$-edge-connected cactus graphs the $2$-edge-connected subgraph polytope is unimodular equivalent to the connected blocks polytope.

\bigskip

Summing up, we obtain the following relations, where $\P_1 \cong \P_2$ denotes that the two polytopes $\P_1$ and $\P_2$ are unimodular equivalent:
\begin{proposition}
    \label{proposition connection}
    \begin{enumerate}[(i)]
        \item Let $T$ be a tree. Then we have
            \begin{align*}
                \CBP(T)=\CSP(T).
            \end{align*}
        \item Let $G$ be a connected Eulerian cactus graph. Then we have
            \begin{align*}
                \ESP_p(G)=\CBP(G) \cong \TECSP(G)=\ESP(G).
            \end{align*}
    \end{enumerate}
\end{proposition}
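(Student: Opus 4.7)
The plan is to verify each equality or unimodular equivalence in the statement by exhibiting an explicit vertex-to-vertex bijection between the relevant polytopes; most of the substantive observations have already been made in \Cref{sectionConsideredPolytopes}, so the proof is largely a matter of collation.

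For part (i), I would first note that every block of a tree $T$ is a single edge $e \in E(T)$ together with its two endpoints, so there is a canonical bijection $\B \leftrightarrow E(T)$ and an identification $\R^\B = \R^{E(T)}$. Under this identification one has $\chi^\A = \chi^{E(G[\A])}$ for every $\A \subseteq \B$, and $G[\A]$ is a connected subgraph of $T$ precisely when $\A$, read as an edge set, is empty or forms a subtree. Any connected subgraph of $T$ either has empty edge set, contributing $\mathbf{0} = \chi^\emptyset$, or is a subtree, so the vertex sets of $\CBP(T)$ and $\CSP(T)$ coincide.

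For part (ii), assume $G$ is a connected Eulerian cactus graph, so its blocks are precisely its cycles and $\B = \C$. The basic observation used throughout is that in a cactus graph any bridgeless subgraph $H \subseteq G$ is a union of whole cycles of $G$: if $H$ contained only some edges of a cycle $C$, an endpoint of one of the resulting arcs would force a bridge in $H$ (namely the edge of $H$ at that endpoint), because in a cactus the only shared structure between distinct cycles is a single cut vertex. From this it follows at once that the Eulerian subgraphs of $G$ are exactly the connected unions of whole cycles, giving $\ESP_p(G) = \CBP(G)$, and likewise that the $2$-edge-connected subgraphs of $G$ are exactly these same subgraphs, yielding $\TECSP(G) = \ESP(G)$.

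It remains to establish the unimodular equivalence $\CBP(G) \cong \TECSP(G)$. Fix for each block $B \in \B$ a representative edge $e_B \in E(B)$ and consider the coordinate projection $\pi \colon \R^{E(G)} \to \R^\B$, $(x_e)_{e \in E(G)} \mapsto (x_{e_B})_{B \in \B}$. Every vertex $\chi^{E(H)}$ of $\TECSP(G)$ is constant on the edges of each block (since $H$ is a union of whole cycles), so $\pi$ maps it to $\chi^{\B[H]}$ and restricts to a vertex-set bijection $\TECSP(G) \to \CBP(G)$, with integer linear inverse $\chi^\A \mapsto \sum_{B \in \A} \chi^{E(B)}$. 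The affine hull of $\TECSP(G)$ is cut out by the equations $x_e = x_f$ for $e,f$ in the same block, so $\pi$ is a lattice isomorphism from $\mathrm{aff}(\TECSP(G)) \cap \Z^{E(G)}$ onto $\Z^\B$, making $\pi$ a unimodular equivalence. The only mildly delicate point in the whole proposition is this lattice-matching step; everything else is the content of \Cref{sectionConsideredPolytopes}.
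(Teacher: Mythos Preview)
Your proposal is correct and follows the same approach as the paper: the proposition is stated there as a summary of the preceding discussion in \Cref{sectionConsideredPolytopes}, with no separate proof, and you have simply collated those observations with a bit of added care on the lattice-isomorphism step for the unimodular equivalence. The only thing the paper leaves more implicit than you do is the explicit projection map and its integrality, which you spell out cleanly.
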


Thus, in the following, it will suffice to consider the general connected blocks polytope, to directly obtain results on the other above mentioned polytopes.

\section{Polyhedral Warm-up}

\label{sectionPolyhedralWarmUp}

In this section we determine the dimension of the connected blocks polytope and characterize when those polytopes are simple and simplicial, respectively.

\begin{proposition}
    \label{dimension}
    Let $G$ be a connected graph and $\B$ its set of blocks. Then we have
    \begin{align*}
        \dim(\CBP(G))=\vert \B \vert.
    \end{align*}
    In particular, $\CBP(G)$ is full-dimensional.
\end{proposition}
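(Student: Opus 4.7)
The plan is to establish full-dimensionality by exhibiting $|\B|+1$ affinely independent vertices of $\CBP(G)$. Since $\CBP(G) \subseteq \R^\B$, the dimension is automatically at most $|\B|$, so only the lower bound requires work.

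The natural candidates are the $|\B|+1$ incidence vectors $\chi^\emptyset = \mathbf{0}$ together with $\chi^{\{B\}}$ for each block $B \in \B$. First I would verify that each of these is indeed a vertex (or at least a point) of $\CBP(G)$: the empty graph is connected by convention, and any single block $B$ is itself a $2$-connected subgraph (or a bridge together with its endpoints) and therefore connected. Hence all of these vectors lie in $\CBP(G)$ by definition.

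Next I would verify affine independence. Translating by $-\mathbf{0}$, it suffices to check that the vectors $\chi^{\{B\}}$, for $B \in \B$, are linearly independent in $\R^\B$. But these are precisely the standard basis vectors of $\R^\B$, so the claim is immediate. This yields $\dim(\CBP(G)) \geq |\B|$, and combined with the trivial upper bound $\dim(\CBP(G)) \leq |\B|$, gives the desired equality.

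There is essentially no obstacle here; the argument is a one-step dimension count. The only thing to be careful about is the convention that $\chi^\emptyset \in \CBP(G)$, which was explicitly adopted in the definition, and the fact that a single block always induces a connected subgraph, which follows from the definition of a block as either a maximal $2$-connected subgraph or a bridge with its two endpoints.
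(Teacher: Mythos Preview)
Your proof is correct and takes exactly the same approach as the paper: the paper's proof is the single sentence that the $|\B|+1$ vectors $\{\chi^{\{B\}}\}_{B\in\B}\cup\{\mathbf{0}\}$ are affinely independent vertices of $\CBP(G)$. You have simply spelled out the verification of affine independence and membership in $\CBP(G)$ that the paper leaves implicit.
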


\begin{proof}
    The $\vert \B \vert +1$ vectors $\{\chi^{\{B\}}\}_{B \in \B} \cup \{\mathbf{0}\}$ are affinely independent vertices of $\CBP(G)$.
\end{proof}

\begin{proposition}
    \label{propsimplesimplicial}
    Let $G$ be a connected graph and $\B$ its set of blocks.
    \begin{enumerate}[(i)]
        \item $\CBP(G)$ is simple if and only if $G$ has at most one cut vertex.
        \item $\CBP(G)$ is simplicial if and only if $\vert \B \vert \le 2$.
    \end{enumerate}
    
\end{proposition}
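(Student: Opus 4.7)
The plan is to treat (i) and (ii) by first settling the two easy directions via direct inspection of the unit cube, and then, for the harder converses, invoke the following standard fact: if $P$ is a polytope, $F$ is a face of $P$, and a vertex $v\in F$ is not simple in $F$, then $v$ is not simple in $P$. This is cleanest to see via vertex figures, since $F/v$ is a face of $P/v$ and every face of a simplex is itself a simplex.

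For the easy directions, if $G$ has at most one cut vertex, then either $|\B|\le 1$ (so $\CBP(G)$ is a point or an interval) or $|X|=1$ with the unique cut vertex $v$ lying in every block (since $\T$ is connected, every block-node must be adjacent to $v$). In the second case every $\A\subseteq\B$ induces a connected subgraph, so $\CBP(G)=[0,1]^{|\B|}$. In all subcases $\CBP(G)$ is a cube, hence simple, and when $|\B|\le 2$ also simplicial; this settles ``$\Leftarrow$'' of both (i) and (ii).

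For the ``$\Rightarrow$'' of (i), I assume $G$ has at least two cut vertices and choose $v,v'\in X$ minimizing $d_\T(v,v')$. Since the block-cut tree alternates blocks and cut vertices, $d_\T(v,v')=2$, so there is a unique block $B_2$ containing both. Pick any block $B_1\ne B_2$ with $v\in B_1$ and any $B_3\ne B_2$ with $v'\in B_3$; two distinct blocks share at most one vertex, and uniqueness of paths in $\T$ then forces $B_1,B_3$ to be distinct with $B_1\cap B_3=\emptyset$. Let $F:=\CBP(G)\cap\bigcap_{B\notin\{B_1,B_2,B_3\}}\{x_B=0\}$; this is a face whose vertices are exactly the $\chi^\A$ with $\A\subseteq\{B_1,B_2,B_3\}$ connected, giving all seven $\{0,1\}^3$-corners except $\chi^{\{B_1,B_3\}}$ (missing because $\{B_1,B_3\}$ is disconnected). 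In this cube-minus-one-vertex face, $\chi^{\{B_1\}}$ has four edge-neighbors $\mathbf{0}$, $\chi^{\{B_1,B_2\}}$, $\chi^{\{B_3\}}$, and $\chi^{\{B_1,B_2,B_3\}}$; the less obvious edges are each certified by a short linear functional (for example $x_{B_1}+x_{B_3}-2x_{B_2}$ attains its maximum $1$ on exactly $\chi^{\{B_1\}}$ and $\chi^{\{B_3\}}$). Since $\dim F=3$ while $\chi^{\{B_1\}}$ has four edges in $F$, it is not simple in $F$, and hence not simple in $\CBP(G)$.

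For the ``$\Rightarrow$'' of (ii) with $|\B|\ge 3$, I would split on the number of cut vertices. If $G$ has at most one cut vertex, $\CBP(G)=[0,1]^{|\B|}$ has facets that are $(|\B|-1)$-cubes with $2^{|\B|-1}>|\B|$ vertices, so no facet is a simplex. If $G$ has at least two cut vertices, I reuse the blocks $B_1,B_2,B_3$ from (i). The hyperplane $\{x_{B_1}=0\}$ is facet-defining (as $\mathbf{0}$ together with the $|\B|-1$ singletons $\chi^{\{B'\}}$, $B'\ne B_1$, give $|\B|$ affinely independent points in it), and this facet contains at least $|\B|+1$ distinct vertices of $\CBP(G)$: those $|\B|$ together with the extra vertex $\chi^{\{B_2,B_3\}}$ (valid because $B_2\cap B_3\ni v'$). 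That is too many vertices for a simplex of dimension $|\B|-1$. The only obstacle I would flag is the face-to-polytope transfer of non-simplicity used in (i), which is standard polytope theory; everything else is bookkeeping around the three chosen blocks.
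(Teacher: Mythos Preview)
Your argument is correct. Part~(i) is essentially identical to the paper's proof: both isolate a block path $B_1,B_2,B_3$ of length three, pass to the $3$-dimensional face obtained by setting all other coordinates to zero, and observe that $\chi^{\{B_1\}}$ has four neighbours there. The paper simply asserts the existence of such a block path once there are two cut vertices and defers the edge verification to its later edge characterization, whereas you locate the path explicitly via a minimal-distance pair in $\T$ and certify one of the edges by an explicit functional; these are cosmetic differences.

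Part~(ii) is where you diverge. The paper gives a one-line argument that avoids your case split on $|X|$: for $|\B|\ge 3$, pick any two blocks $B_1,B_2$ sharing a cut vertex (these exist because $\T$ is connected), and note that the face $\{x\in\CBP(G)\mid x_B=0\text{ for }B\notin\{B_1,B_2\}\}$ is a proper face isomorphic to the square $Q_2$, hence not a simplex. Your route---showing a specific facet carries too many vertices, separately for $|X|\le 1$ and $|X|\ge 2$---is valid but more laborious; in particular the detour through the three blocks from part~(i) is unnecessary, since two adjacent blocks already suffice. The paper's version has the advantage of using only a $2$-face and no case analysis, while yours has the minor advantage of exhibiting an explicit non-simplex \emph{facet}.
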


\begin{proof}
    \begin{enumerate}[$(i)$]
        \item The polytope is trivial if $G$ has no cut vertex. Suppose $G$ has precisely one cut vertex $v \in V(G)$. Then $\CBP(G)$ is given by the $\vert \B \vert$-dimensional unit cube since every subset $\A \subseteq \B$ induces a connected subgraph of $G$. In particular, $\CBP(G)$ is simple.
    
        Suppose now that there are at least two distinct cut vertices $v,w \in V(G)$. Since these two vertices are connected in $G$ by a block path of length $\ge 1$, there must exist a block path $P_3=B_1 \cup B_2 \cup B_3 \subseteq G$ of length $3$. Consider the face $\F$ of $\CBP(G)$ given by 
        \begin{align*}
            \F \coloneqq \{ x\in \CBP(G) \mid x_B=0 \text{ for all } B \in \B \backslash \{B_1,B_2,B_3\}\}.
        \end{align*}
        Then $\F$ is unimodular equivalent to the $3$-dimensional connected blocks polytope of~$P_3$. It is easy to verify that the vertex $\chi^{\{B_1\}}$ of $\CBP(P_3)$ lies in an edge with each of the vertices $\chi^\emptyset, \chi^{\{B_3\}}, \chi^{\{B_1,B_2\}}$ and $\chi^{\{B_1,B_2,B_3\}}$, see also \Cref{sectionEdges}. Thus, by Proposition 2.4 of \cite{Ziegler} the vertex figure of $\chi^{\{B_1\}}$ is not a simplex, $\F$ is not simple, and thus $\CBP(G)$ is not simple.
        
        \item For $\vert \B \vert \in \{1,2\}$, we have $\CBP(G)=Q_1$ or $\CBP(G)=Q_2$, respectively, which are simplicial.
        
        Suppose $\vert \B \vert \ge 3$. Let $B_1$ and $B_2$ be blocks sharing a common vertex. Since $\vert \B \vert \ge 3$,
            \begin{align*}
                \F \coloneqq \{x \in \CBP(G) \mid x_B=0 \text{ for all } B \in \B \backslash \{B_1,B_2\} \}
          \end{align*}
        defines a proper face of $\CBP(G)$. Since $\F$ is affinely isomorphic to $Q_2$, it follows that $\CBP(G)$ is not simplicial. \qedhere
    \end{enumerate}
    \end{proof}

\section{Facets}

\label{sectionFacets}

In this section we will provide a complete and irredundant facet description for the connected blocks polytope.

\subsection*{Box inequalities}

We first show that the box inequalities $0 \le x_B \le 1$ for $B \in \B$ define facets of $\CBP(G)$.

\begin{theorem}
    \label{theoremBox}
    Let $G$ be a connected graph, $\B$ its set of blocks and $B \in \B$. Then the inequalities
    \begin{align*}
        x_B \ge 0 \text{ and } x_B \le 1
    \end{align*}
    define facets of $\CBP(G)$.
\end{theorem}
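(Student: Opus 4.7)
The plan is to exhibit, for each of the two inequalities, a collection of $|\B|$ affinely independent vertices of $\CBP(G)$ at which the inequality is tight. Combined with $\dim(\CBP(G)) = |\B|$ from \Cref{dimension}, this shows that each defining hyperplane meets $\CBP(G)$ in a face of dimension $|\B|-1$, i.e., a facet.

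For the inequality $x_B \ge 0$, the candidates are immediate. The zero vector $\chi^\emptyset = \mathbf{0}$ and the unit vectors $\chi^{\{B'\}}$ for $B' \in \B \setminus \{B\}$ are all vertices of $\CBP(G)$ (the singleton $\{B'\}$ trivially induces the connected subgraph $B'$), and each has $B$-coordinate equal to $0$. Their pairwise differences $\chi^{\{B'\}} - \chi^\emptyset$ with $B' \neq B$ are precisely the $|\B|-1$ standard coordinate vectors indexed by $\B \setminus \{B\}$, so the $|\B|$ chosen vertices are affinely independent and the face $\{x_B = 0\} \cap \CBP(G)$ is a facet.

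For the inequality $x_B \le 1$, I would use the block-cut tree $\T$ to build the vertices. Take $\chi^{\{B\}}$, and for every $B' \in \B \setminus \{B\}$ let $\A_{B'}$ be the set of blocks lying on the unique path from $B$ to $B'$ in $\T$. Then $\T\langle \A_{B'}\rangle$ is exactly that path, so $V(\T\langle \A_{B'}\rangle) \cap \B = \A_{B'}$, which means $G[\A_{B'}]$ is connected and $\chi^{\A_{B'}}$ is a vertex of $\CBP(G)$ with $B$-coordinate equal to $1$. The only real step is to verify affine independence of the resulting $|\B|$ vertices. I would order $\B \setminus \{B\} = \{B_1, \dots, B_{|\B|-1}\}$ by nondecreasing distance from $B$ in $\T$ and consider the $(|\B|-1)\times(|\B|-1)$ matrix $M$ with $M_{ij} = (\chi^{\A_{B_i}})_{B_j}$. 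Each diagonal entry is $1$ because $B_i \in \A_{B_i}$; and if $j > i$, then $B_j$ is strictly farther from $B$ in $\T$ than $B_i$, so $B_j$ cannot lie on the $B$-to-$B_i$ path and $M_{ij} = 0$. Thus $M$ is lower triangular with unit diagonal, hence invertible, giving the required affine independence. The triangularity observation is the only point where anything genuinely needs to be checked; everything else is setup, so I do not anticipate a real obstacle.
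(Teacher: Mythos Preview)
Your proof is correct. The argument for $x_B \ge 0$ is identical to the paper's. For $x_B \le 1$ you take a slightly different route: the paper picks an ordering $B = B_1, B_2, \ldots, B_n$ of $\B$ with every prefix $\{B_1,\ldots,B_i\}$ connected and uses the chain $\chi^{\{B_1,\ldots,B_i\}}$, whose incidence matrix is visibly triangular; you instead use the block paths $\A_{B'}$ from $B$ to each $B'$ in $\T$ and obtain triangularity by sorting the $B'$ by distance from $B$. Both constructions produce a triangular system and are equally elementary. One small wording slip: with a \emph{nondecreasing} ordering, $j>i$ does not force $B_j$ to be \emph{strictly} farther than $B_i$; but the conclusion still holds, since the only block on the $B$-to-$B_i$ path at distance $d_\T(B,B_i)$ is $B_i$ itself, so a distinct $B_j$ at the same distance is not on that path either.
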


\begin{proof}
    Since $\CBP(G)$ is a $0$-$1$-polytope, it is clear that the inequalities define supporting hyperplanes of $\CBP(G)$. Let $\mathcal{F}$ be the face of $\CBP(G)$ defined by $x_B=0$, i.e.,
    \begin{align*}
        \F \coloneqq \{x \in \CBP(G) \mid x_B=0 \}.
    \end{align*}
    Then the $\vert \B \vert$ many vertices $\{\chi^{\{B'\}}\}_{B' \in \B \setminus \{B\}} \cup \{\textbf{0}\}$ lie in $\F$ and are affinely independent. It follows that $\F$ is a facet, since $\dim(\CBP(G))=\vert \B \vert$.

    Now consider $x_B \le 1$ and let $\B=\{B_1,\ldots,B_n\}$. After possibly reordering the blocks we can assume that $B=B_1$ and $G[\{B_1,\ldots,B_i\}]$ is connected for all $i \in [n]$. Then the incidence vectors $\chi^{\{B_1,\ldots,B_i\}}$ are vertices of the face defined by $x_B=1$ for all $i \in [n]$. Since these $\vert \B \vert$ many vectors are affinely independent, the inequality $x_B \le 1$ defines a facet of $\CBP(G)$.
\end{proof}

\subsection*{Independent blocks inequalities}

We will motivate the next type of inequalities by a few examples. Consider a block path $P_3=B_1 \cup B_2 \cup B_3$ of length $3$\Xn{, which can also be seen as a subgraph of the graph in \Cref{fig:Example}}. \Xr{We have to ensure that it is not possible to choose both $B_1$ and $B_3$ without choosing $B_2$.}\Xn{Since the blocks $B_1$ and $B_3$ do not share a common vertex, we have to find an inequality that is feasible for $\CBP(P_3)$, but not for $\chi^{\{B_1,B_3\}}$.} This can be achieved by imposing the inequality
\begin{align*}
    x_{B_1}-x_{B_2}+x_{B_3} \le 1.
\end{align*}
\Xr{since this forbids the incidence vector $\chi^{\{B_1,B_3\}}$.} For an arbitrary connected graph $G$ with set of blocks $\B$ this can be generalized to
\begin{align}
    \label{ILP}
    x_{B_1}-x_{B_2}+x_{B_3} \le 1,
\end{align}
for any $B_1,B_2,B_3 \in \B$, with $V(B_1) \cap V(B_3)=\emptyset$ and  $B_2 \in \B \langle\{B_1,B_3\}\rangle\setminus \{B_1,B_3\}$. It is easy to check that together with the box inequalities this already describes all feasible lattice points
\begin{align*}
    \CBP(G) \cap \Z^\B.
\end{align*}
However, these inequalities do not suffice to describe $\CBP(G)$, i.e., the convex hull of these points. For example consider a block path $P_5=B_1\cup B_2\cup B_3\cup B_4\cup B_5$ of length $5$\Xn{, which again can be seen as a subgraph of the graph in \Cref{fig:Example}}. Then the inequality
\begin{align*}
    x_{B_1}-x_{B_2}+x_{B_3}-x_{B_4}+x_{B_5} \le 1
\end{align*}
is valid for $\CBP(P_5)$ and forbids the point $(\frac{1}{2},0,\frac{1}{2},0,\frac{1}{2})^T$, which would remain feasible if only the inequalities in \eqref{ILP} and the box inequalities were imposed. Similarly, if $G$ is a graph with blocks $\B=\{B_1,B_2,B_3,B\}$ such that $V(B) \cap V(B_i) \neq \emptyset$ for all $i \in [3]$ and $V(B_i) \cap V(B_j)=\emptyset$ for $i,j \in [3]$, $i\neq j$, then also the inequality
\begin{align*}
    x_{B_1}+x_{B_2}+x_{B_3}-2x_{B} \le 1
\end{align*}
defines a supporting hyperplane of $\CBP(G)$ and cuts off the otherwise feasible point $(\frac{1}{2},\frac{1}{2},\frac{1}{2},0)^T$. \Xn{(For the graph in \Cref{fig:Example} a subgraph of this type would be given by $B_3 \cup B_4 \cup B_5 \cup B_8$, where $B=B_4$.)}

We now generalize these examples as follows. For a connected graph $G$ and its set of blocks~$\B$ we call $\I \subseteq \B$ an \textit{independent set of blocks} if there are no two blocks in $\I$ that share a common vertex. \Xn{Equivalently, all blocks of $\I$ have distance at least four to each other in the block-cut tree $\T(G)$.} If $\A \subseteq \B$ is a set of blocks such that $G[\A]$ is connected and $\A$ contains an independent set of blocks $\I \subseteq \A$, then $\A$ also contains all blocks of $\B\langle \I \rangle$. On the other hand, if $\A$ does not induce a connected subgraph, then there always exists an independent set of blocks $\I \subseteq \B$, such that $\I \subseteq \A$ but $\B\langle\I\rangle \nsubseteq \A$. This motivates the \textit{independent blocks inequalities}, defined as follows.

\begin{definition}(Independent Blocks Inequality)
    Let $G$ be a connected graph, $\B$ its set of blocks, and $\I \subseteq \B$ an independent set of blocks. We call
    \begin{align}
        \label{ineqindcyc}
        \sum_{B \in \B}\alpha_Bx_B \le 1
    \end{align}
    an \emph{independent blocks inequality} if
    \begin{itemize}
        \item for $B \notin \B\langle \I \rangle$ we have $\alpha_B=0$,
        \item for $B \in \I$ we have $\alpha_B=1$, and
        \item for $B \in \B\langle \I \rangle \setminus \I$ we have $\alpha_B \in \Z_{\le 0}$,
    \end{itemize}
    such that
    \begin{itemize}
        \item $\sum_{B \in \B\langle\I\rangle \backslash \I}\alpha_B=-(\vert \I \vert-1)$, and
        \item for all $I \subseteq \I$ we have
        \begin{align}
            \sum_{B \in \B\langle I\rangle\backslash \I}\alpha_B \le -(\vert I \vert-1).
            \label{condIBI}
        \end{align}
    \end{itemize}
We will also denote an independent blocks inequality by the pair $(\I,\alpha)$.
\end{definition}

Observe that for $\I=\{B\} \subseteq \B$ we get the box inequality $x_B \le 1$.

\Xn{\begin{example}
    Consider again the graph $G$ of \Cref{fig:Example}. We will give some examples of independent blocks inequalities of $G$ here, by writing the coefficient $\alpha_B$ in the corresponding block $B$.

\begin{center}
    \begin{tikzpicture}[scale=1.2]
    \begin{scope}[yscale=1,xscale=-1,shift={(-8.5cm,0cm)}]
        \coordinate (O) at (150:0.5);
        \coordinate (P) at (150:0.5);
        \coordinate (B) at (0.25,0.433);
        \coordinate (C) at (0.25,-0.433);
        
        \fill[green!80!black!10,line width=0.5mm] (-1,0) .. controls ($(-1,0)+(45:0.95)$) and ($(0,0)+(100:0.4)$) .. (0,0);
        \fill[green!80!black!10,line width=0.5mm] (-1,0) .. controls ($(-1,0)+(-45:0.95)$) and ($(0,0)+(-100:0.4)$) .. (0,0);
        
        \draw[green!80!black,line width=0.5mm] (-1,0) .. controls ($(-1,0)+(45:0.95)$) and ($(0,0)+(100:0.4)$) .. (0,0);
        \draw[green!80!black,line width=0.5mm] (-1,0) .. controls ($(-1,0)+(-45:0.95)$) and ($(0,0)+(-100:0.4)$) .. (0,0);

        \fill[red!10,line width=0.5mm] (0,0) .. controls (105:0.95) and ($2*(B)+(150:0.4)$) .. ($2*(B)$);
        \fill[red!10,line width=0.5mm] (0,0) .. controls (15:0.95) and ($2*(B)-(150:0.4)$) .. ($2*(B)$);
        
        \draw[red,line width=0.5mm] (0,0) .. controls (105:0.95) and ($2*(B)+(150:0.4)$) .. ($2*(B)$);
        \draw[red,line width=0.5mm] (0,0) .. controls (15:0.95) and ($2*(B)-(150:0.4)$) .. ($2*(B)$);

        \fill[blue!10,line width=0.5mm] (0,0) .. controls (-15:0.95) and ($2*(C)-(210:0.4)$) .. ($2*(C)$);
        \fill[blue!10,line width=0.5mm] (0,0) .. controls (255:0.95) and ($2*(C)+(210:0.4)$) .. ($2*(C)$);
        
        \draw[blue,line width=0.5mm] (0,0) .. controls (-15:0.95) and ($2*(C)-(210:0.4)$) .. ($2*(C)$);
        \draw[blue,line width=0.5mm] (0,0) .. controls (255:0.95) and ($2*(C)+(210:0.4)$) .. ($2*(C)$);
        
        \fill[violet!10, line width=0.5mm] (-1,0) .. controls ($(-1,0)+(135:0.95)$) and ($(-2,0)+(85:0.4)$) .. (-2,0);
        \fill[violet!10, line width=0.5mm] (-1,0) .. controls ($(-1,0)+(-135:0.95)$) and ($(-2,0)-(85:0.4)$) .. (-2,0);
        
        \draw[violet, line width=0.5mm] (-1,0) .. controls ($(-1,0)+(135:0.95)$) and ($(-2,0)+(85:0.4)$) .. (-2,0);
        \draw[violet, line width=0.5mm] (-1,0) .. controls ($(-1,0)+(-135:0.95)$) and ($(-2,0)-(85:0.4)$) .. (-2,0);

        \fill[orange!10, line width=0.5mm] (-1,0) .. controls ($(-1,0)+(230:0.95)$) and ($(-1,-1)+(180:0.4)$) .. (-1,-1);
        \fill[orange!10, line width=0.5mm] (-1,0) .. controls ($(-1,0)+(-50:0.95)$) and ($(-1,-1)+(0:0.4)$) .. (-1,-1);
        
        \draw[orange, line width=0.5mm] (-1,0) .. controls ($(-1,0)+(230:0.95)$) and ($(-1,-1)+(180:0.4)$) .. (-1,-1);
        \draw[orange, line width=0.5mm] (-1,0) .. controls ($(-1,0)+(-50:0.95)$) and ($(-1,-1)+(0:0.4)$) .. (-1,-1);

        \fill[yellow!80!black!10,line width=0.5mm] (-2.5,0) circle (0.5);
        \fill[teal!10,line width=0.5mm] (-3.5,0) circle (0.5);
        \fill[brown!10,line width=0.5mm] (-2.5,-1) circle (0.5);
        
        \draw[yellow!80!black,line width=0.5mm] (-2.5,0) circle (0.5);
        \draw[teal,line width=0.5mm] (-3.5,0) circle (0.5);
        \draw[brown,line width=0.5mm] (-2.5,-1) circle (0.5);
        \fill (0,0) circle (2pt);
        \fill (-1,0) circle (2pt);
        \fill (-2,0) circle (2pt);
        \fill (-3,0) circle (2pt);
        \fill (-2.5,-0.5) circle (2pt);
        \coordinate[label=center:\small{$1$}] (A1) at (B);
        \coordinate[label=center:\small{$0$}] (A1) at (C);
        \coordinate[label=center:\small{$-1$}] (A1) at (-0.5,0);
        \coordinate[label=center:\small{$0$}] (A1) at (-1.5,0);
        \coordinate[label=center:\small{$0$}] (A1) at (-2.5,0);
        \coordinate[label=center:\small{$0$}] (A1) at (-3.5,0);
        \coordinate[label=center:\small{$1$}] (A1) at (-2.5,-1);
        \coordinate[label=center:\small{$0$}] (A1) at (-1,-0.5);
        \coordinate[label=center:\small{$\le 1$,}] (A1) at (-4.5,0);
        \end{scope}
    \end{tikzpicture} \begin{tikzpicture}[scale=1.2]
    \begin{scope}[yscale=1,xscale=-1,shift={(-8.5cm,0cm)}]
        \coordinate (O) at (150:0.5);
        \coordinate (P) at (150:0.5);
        \coordinate (B) at (0.25,0.433);
        \coordinate (C) at (0.25,-0.433);
        
        \fill[green!80!black!10,line width=0.5mm] (-1,0) .. controls ($(-1,0)+(45:0.95)$) and ($(0,0)+(100:0.4)$) .. (0,0);
        \fill[green!80!black!10,line width=0.5mm] (-1,0) .. controls ($(-1,0)+(-45:0.95)$) and ($(0,0)+(-100:0.4)$) .. (0,0);
        
        \draw[green!80!black,line width=0.5mm] (-1,0) .. controls ($(-1,0)+(45:0.95)$) and ($(0,0)+(100:0.4)$) .. (0,0);
        \draw[green!80!black,line width=0.5mm] (-1,0) .. controls ($(-1,0)+(-45:0.95)$) and ($(0,0)+(-100:0.4)$) .. (0,0);

        \fill[red!10,line width=0.5mm] (0,0) .. controls (105:0.95) and ($2*(B)+(150:0.4)$) .. ($2*(B)$);
        \fill[red!10,line width=0.5mm] (0,0) .. controls (15:0.95) and ($2*(B)-(150:0.4)$) .. ($2*(B)$);
        
        \draw[red,line width=0.5mm] (0,0) .. controls (105:0.95) and ($2*(B)+(150:0.4)$) .. ($2*(B)$);
        \draw[red,line width=0.5mm] (0,0) .. controls (15:0.95) and ($2*(B)-(150:0.4)$) .. ($2*(B)$);

        \fill[blue!10,line width=0.5mm] (0,0) .. controls (-15:0.95) and ($2*(C)-(210:0.4)$) .. ($2*(C)$);
        \fill[blue!10,line width=0.5mm] (0,0) .. controls (255:0.95) and ($2*(C)+(210:0.4)$) .. ($2*(C)$);
        
        \draw[blue,line width=0.5mm] (0,0) .. controls (-15:0.95) and ($2*(C)-(210:0.4)$) .. ($2*(C)$);
        \draw[blue,line width=0.5mm] (0,0) .. controls (255:0.95) and ($2*(C)+(210:0.4)$) .. ($2*(C)$);
        
        \fill[violet!10, line width=0.5mm] (-1,0) .. controls ($(-1,0)+(135:0.95)$) and ($(-2,0)+(85:0.4)$) .. (-2,0);
        \fill[violet!10, line width=0.5mm] (-1,0) .. controls ($(-1,0)+(-135:0.95)$) and ($(-2,0)-(85:0.4)$) .. (-2,0);
        
        \draw[violet, line width=0.5mm] (-1,0) .. controls ($(-1,0)+(135:0.95)$) and ($(-2,0)+(85:0.4)$) .. (-2,0);
        \draw[violet, line width=0.5mm] (-1,0) .. controls ($(-1,0)+(-135:0.95)$) and ($(-2,0)-(85:0.4)$) .. (-2,0);

        \fill[orange!10, line width=0.5mm] (-1,0) .. controls ($(-1,0)+(230:0.95)$) and ($(-1,-1)+(180:0.4)$) .. (-1,-1);
        \fill[orange!10, line width=0.5mm] (-1,0) .. controls ($(-1,0)+(-50:0.95)$) and ($(-1,-1)+(0:0.4)$) .. (-1,-1);
        
        \draw[orange, line width=0.5mm] (-1,0) .. controls ($(-1,0)+(230:0.95)$) and ($(-1,-1)+(180:0.4)$) .. (-1,-1);
        \draw[orange, line width=0.5mm] (-1,0) .. controls ($(-1,0)+(-50:0.95)$) and ($(-1,-1)+(0:0.4)$) .. (-1,-1);

        \fill[yellow!80!black!10,line width=0.5mm] (-2.5,0) circle (0.5);
        \fill[teal!10,line width=0.5mm] (-3.5,0) circle (0.5);
        \fill[brown!10,line width=0.5mm] (-2.5,-1) circle (0.5);
        
        \draw[yellow!80!black,line width=0.5mm] (-2.5,0) circle (0.5);
        \draw[teal,line width=0.5mm] (-3.5,0) circle (0.5);
        \draw[brown,line width=0.5mm] (-2.5,-1) circle (0.5);
        \fill (0,0) circle (2pt);
        \fill (-1,0) circle (2pt);
        \fill (-2,0) circle (2pt);
        \fill (-3,0) circle (2pt);
        \fill (-2.5,-0.5) circle (2pt);
        \coordinate[label=center:\small{$0$}] (A1) at (B);
        \coordinate[label=center:\small{$0$}] (A1) at (C);
        \coordinate[label=center:\small{$0$}] (A1) at (-0.5,0);
        \coordinate[label=center:\small{$1$}] (A1) at (-1.5,0);
        \coordinate[label=center:\small{$-2$}] (A1) at (-2.5,0);
        \coordinate[label=center:\small{$1$}] (A1) at (-3.5,0);
        \coordinate[label=center:\small{$1$}] (A1) at (-2.5,-1);
        \coordinate[label=center:\small{$0$}] (A1) at (-1,-0.5);
        \coordinate[label=center:\small{$\le 1$,}] (A1) at (-4.5,0);
        \end{scope}
    \end{tikzpicture}
    \end{center}

    \begin{center}
    \begin{tikzpicture}[scale=1.2]
    \begin{scope}[yscale=1,xscale=-1,shift={(-8.5cm,0cm)}]
        \coordinate (O) at (150:0.5);
        \coordinate (P) at (150:0.5);
        \coordinate (B) at (0.25,0.433);
        \coordinate (C) at (0.25,-0.433);
        
        \fill[green!80!black!10,line width=0.5mm] (-1,0) .. controls ($(-1,0)+(45:0.95)$) and ($(0,0)+(100:0.4)$) .. (0,0);
        \fill[green!80!black!10,line width=0.5mm] (-1,0) .. controls ($(-1,0)+(-45:0.95)$) and ($(0,0)+(-100:0.4)$) .. (0,0);
        
        \draw[green!80!black,line width=0.5mm] (-1,0) .. controls ($(-1,0)+(45:0.95)$) and ($(0,0)+(100:0.4)$) .. (0,0);
        \draw[green!80!black,line width=0.5mm] (-1,0) .. controls ($(-1,0)+(-45:0.95)$) and ($(0,0)+(-100:0.4)$) .. (0,0);

        \fill[red!10,line width=0.5mm] (0,0) .. controls (105:0.95) and ($2*(B)+(150:0.4)$) .. ($2*(B)$);
        \fill[red!10,line width=0.5mm] (0,0) .. controls (15:0.95) and ($2*(B)-(150:0.4)$) .. ($2*(B)$);
        
        \draw[red,line width=0.5mm] (0,0) .. controls (105:0.95) and ($2*(B)+(150:0.4)$) .. ($2*(B)$);
        \draw[red,line width=0.5mm] (0,0) .. controls (15:0.95) and ($2*(B)-(150:0.4)$) .. ($2*(B)$);

        \fill[blue!10,line width=0.5mm] (0,0) .. controls (-15:0.95) and ($2*(C)-(210:0.4)$) .. ($2*(C)$);
        \fill[blue!10,line width=0.5mm] (0,0) .. controls (255:0.95) and ($2*(C)+(210:0.4)$) .. ($2*(C)$);
        
        \draw[blue,line width=0.5mm] (0,0) .. controls (-15:0.95) and ($2*(C)-(210:0.4)$) .. ($2*(C)$);
        \draw[blue,line width=0.5mm] (0,0) .. controls (255:0.95) and ($2*(C)+(210:0.4)$) .. ($2*(C)$);
        
        \fill[violet!10, line width=0.5mm] (-1,0) .. controls ($(-1,0)+(135:0.95)$) and ($(-2,0)+(85:0.4)$) .. (-2,0);
        \fill[violet!10, line width=0.5mm] (-1,0) .. controls ($(-1,0)+(-135:0.95)$) and ($(-2,0)-(85:0.4)$) .. (-2,0);
        
        \draw[violet, line width=0.5mm] (-1,0) .. controls ($(-1,0)+(135:0.95)$) and ($(-2,0)+(85:0.4)$) .. (-2,0);
        \draw[violet, line width=0.5mm] (-1,0) .. controls ($(-1,0)+(-135:0.95)$) and ($(-2,0)-(85:0.4)$) .. (-2,0);

        \fill[orange!10, line width=0.5mm] (-1,0) .. controls ($(-1,0)+(230:0.95)$) and ($(-1,-1)+(180:0.4)$) .. (-1,-1);
        \fill[orange!10, line width=0.5mm] (-1,0) .. controls ($(-1,0)+(-50:0.95)$) and ($(-1,-1)+(0:0.4)$) .. (-1,-1);
        
        \draw[orange, line width=0.5mm] (-1,0) .. controls ($(-1,0)+(230:0.95)$) and ($(-1,-1)+(180:0.4)$) .. (-1,-1);
        \draw[orange, line width=0.5mm] (-1,0) .. controls ($(-1,0)+(-50:0.95)$) and ($(-1,-1)+(0:0.4)$) .. (-1,-1);

        \fill[yellow!80!black!10,line width=0.5mm] (-2.5,0) circle (0.5);
        \fill[teal!10,line width=0.5mm] (-3.5,0) circle (0.5);
        \fill[brown!10,line width=0.5mm] (-2.5,-1) circle (0.5);
        
        \draw[yellow!80!black,line width=0.5mm] (-2.5,0) circle (0.5);
        \draw[teal,line width=0.5mm] (-3.5,0) circle (0.5);
        \draw[brown,line width=0.5mm] (-2.5,-1) circle (0.5);
        \fill (0,0) circle (2pt);
        \fill (-1,0) circle (2pt);
        \fill (-2,0) circle (2pt);
        \fill (-3,0) circle (2pt);
        \fill (-2.5,-0.5) circle (2pt);
        \coordinate[label=center:\small{$1$}] (A1) at (B);
        \coordinate[label=center:\small{$0$}] (A1) at (C);
        \coordinate[label=center:\small{$-1$}] (A1) at (-0.5,0);
        \coordinate[label=center:\small{$-1$}] (A1) at (-1.5,0);
        \coordinate[label=center:\small{$0$}] (A1) at (-2.5,0);
        \coordinate[label=center:\small{$1$}] (A1) at (-3.5,0);
        \coordinate[label=center:\small{$0$}] (A1) at (-2.5,-1);
        \coordinate[label=center:\small{$1$}] (A1) at (-1,-0.5);
        \coordinate[label=center:\small{$\le 1$,}] (A1) at (-4.5,0);
        \end{scope}
    \end{tikzpicture} \begin{tikzpicture}[scale=1.2]
    \begin{scope}[yscale=1,xscale=-1,shift={(-8.5cm,0cm)}]
        \coordinate (O) at (150:0.5);
        \coordinate (P) at (150:0.5);
        \coordinate (B) at (0.25,0.433);
        \coordinate (C) at (0.25,-0.433);
        
        \fill[green!80!black!10,line width=0.5mm] (-1,0) .. controls ($(-1,0)+(45:0.95)$) and ($(0,0)+(100:0.4)$) .. (0,0);
        \fill[green!80!black!10,line width=0.5mm] (-1,0) .. controls ($(-1,0)+(-45:0.95)$) and ($(0,0)+(-100:0.4)$) .. (0,0);
        
        \draw[green!80!black,line width=0.5mm] (-1,0) .. controls ($(-1,0)+(45:0.95)$) and ($(0,0)+(100:0.4)$) .. (0,0);
        \draw[green!80!black,line width=0.5mm] (-1,0) .. controls ($(-1,0)+(-45:0.95)$) and ($(0,0)+(-100:0.4)$) .. (0,0);

        \fill[red!10,line width=0.5mm] (0,0) .. controls (105:0.95) and ($2*(B)+(150:0.4)$) .. ($2*(B)$);
        \fill[red!10,line width=0.5mm] (0,0) .. controls (15:0.95) and ($2*(B)-(150:0.4)$) .. ($2*(B)$);
        
        \draw[red,line width=0.5mm] (0,0) .. controls (105:0.95) and ($2*(B)+(150:0.4)$) .. ($2*(B)$);
        \draw[red,line width=0.5mm] (0,0) .. controls (15:0.95) and ($2*(B)-(150:0.4)$) .. ($2*(B)$);

        \fill[blue!10,line width=0.5mm] (0,0) .. controls (-15:0.95) and ($2*(C)-(210:0.4)$) .. ($2*(C)$);
        \fill[blue!10,line width=0.5mm] (0,0) .. controls (255:0.95) and ($2*(C)+(210:0.4)$) .. ($2*(C)$);
        
        \draw[blue,line width=0.5mm] (0,0) .. controls (-15:0.95) and ($2*(C)-(210:0.4)$) .. ($2*(C)$);
        \draw[blue,line width=0.5mm] (0,0) .. controls (255:0.95) and ($2*(C)+(210:0.4)$) .. ($2*(C)$);
        
        \fill[violet!10, line width=0.5mm] (-1,0) .. controls ($(-1,0)+(135:0.95)$) and ($(-2,0)+(85:0.4)$) .. (-2,0);
        \fill[violet!10, line width=0.5mm] (-1,0) .. controls ($(-1,0)+(-135:0.95)$) and ($(-2,0)-(85:0.4)$) .. (-2,0);
        
        \draw[violet, line width=0.5mm] (-1,0) .. controls ($(-1,0)+(135:0.95)$) and ($(-2,0)+(85:0.4)$) .. (-2,0);
        \draw[violet, line width=0.5mm] (-1,0) .. controls ($(-1,0)+(-135:0.95)$) and ($(-2,0)-(85:0.4)$) .. (-2,0);

        \fill[orange!10, line width=0.5mm] (-1,0) .. controls ($(-1,0)+(230:0.95)$) and ($(-1,-1)+(180:0.4)$) .. (-1,-1);
        \fill[orange!10, line width=0.5mm] (-1,0) .. controls ($(-1,0)+(-50:0.95)$) and ($(-1,-1)+(0:0.4)$) .. (-1,-1);
        
        \draw[orange, line width=0.5mm] (-1,0) .. controls ($(-1,0)+(230:0.95)$) and ($(-1,-1)+(180:0.4)$) .. (-1,-1);
        \draw[orange, line width=0.5mm] (-1,0) .. controls ($(-1,0)+(-50:0.95)$) and ($(-1,-1)+(0:0.4)$) .. (-1,-1);

        \fill[yellow!80!black!10,line width=0.5mm] (-2.5,0) circle (0.5);
        \fill[teal!10,line width=0.5mm] (-3.5,0) circle (0.5);
        \fill[brown!10,line width=0.5mm] (-2.5,-1) circle (0.5);
        
        \draw[yellow!80!black,line width=0.5mm] (-2.5,0) circle (0.5);
        \draw[teal,line width=0.5mm] (-3.5,0) circle (0.5);
        \draw[brown,line width=0.5mm] (-2.5,-1) circle (0.5);
        \fill (0,0) circle (2pt);
        \fill (-1,0) circle (2pt);
        \fill (-2,0) circle (2pt);
        \fill (-3,0) circle (2pt);
        \fill (-2.5,-0.5) circle (2pt);
        \coordinate[label=center:\small{$0$}] (A1) at (B);
        \coordinate[label=center:\small{$1$}] (A1) at (C);
        \coordinate[label=center:\small{$-1$}] (A1) at (-0.5,0);
        \coordinate[label=center:\small{$1$}] (A1) at (-1.5,0);
        \coordinate[label=center:\small{$-2$}] (A1) at (-2.5,0);
        \coordinate[label=center:\small{$1$}] (A1) at (-3.5,0);
        \coordinate[label=center:\small{$1$}] (A1) at (-2.5,-1);
        \coordinate[label=center:\small{$0$}] (A1) at (-1,-0.5);
        \coordinate[label=center:\small{$\le 1$.}] (A1) at (-4.5,0);
        \end{scope}
    \end{tikzpicture}
    \end{center}
\end{example}}

\Xr{Generally, t}\Xn{T}he necessary condition \eqref{condIBI} ensures that inequality \eqref{ineqindcyc} defines a supporting hyperplane of $\CBP(G)$:

\begin{lemma}
    Let $G$ be a connected graph. The independent blocks inequalities define supporting hyperplanes of $\CBP(G)$.
    \label{supporting}
\end{lemma}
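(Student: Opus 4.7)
The plan is to establish two things: first, that $\sum_{B\in\B}\alpha_B x_B\le 1$ is valid on all of $\CBP(G)$; and second, that this inequality is attained with equality by some point of $\CBP(G)$. Because $\CBP(G)$ is the convex hull of incidence vectors $\chi^\A$ with $G[\A]$ connected, it suffices in both parts to test on such vertices, that is, to verify $\sum_{B\in\A}\alpha_B\le 1$ for every connected $\A\subseteq\B$ and to exhibit one $\A$ where equality holds.

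For the validity step I would fix a connected $\A\subseteq\B$ and set $I := \A\cap\I$. The key structural observation is that since $I\subseteq\A$ and $G[\A]$ is connected, the minimality of $\B\langle I\rangle$ in the block-cut tree forces $\B\langle I\rangle\subseteq\A$. Using in addition that $\alpha_B=0$ outside $\B\langle\I\rangle$, I would decompose
\begin{align*}
\sum_{B\in\A}\alpha_B
= \sum_{B\in I}\alpha_B
\;+\;\sum_{B\in \B\langle I\rangle\setminus\I}\alpha_B
\;+\;\sum_{B\in \A\cap(\B\langle\I\rangle\setminus(\B\langle I\rangle\cup\I))}\alpha_B.
\end{align*}
The first sum equals $|I|$ because $\alpha_B=1$ on $\I$; the second is at most $-(|I|-1)$ by condition \eqref{condIBI} applied at the subset $I\subseteq\I$; the third is nonpositive because $\alpha_B\in\Z_{\le 0}$ on $\B\langle\I\rangle\setminus\I$. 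Adding the three bounds gives at most $|I|-(|I|-1)+0 = 1$.

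For the equality step I would take $\A^\ast := \B\langle\I\rangle$. Since $G[\B\langle\I\rangle]$ is connected by the very definition of $\B\langle\cdot\rangle$, the vector $\chi^{\A^\ast}$ is a vertex of $\CBP(G)$. Splitting its evaluation into contributions from $\I$ and from $\B\langle\I\rangle\setminus\I$ yields $|\I| + (-(|\I|-1)) = 1$, where the second summand is exactly the defining equation of an independent blocks inequality.

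The main bookkeeping obstacle is recognizing that $\A$ can legitimately contain blocks in $\B\langle\I\rangle\setminus\B\langle I\rangle$ (blocks only needed to connect a larger subset of $\I$ than $I$, or sitting on detours through $\B\langle\I\rangle$), so one cannot simply apply \eqref{condIBI} with $\I$ in place of $I$; the correct invocation is at the specific $I = \A\cap\I$ determined by the vertex, while the ``bonus'' blocks outside $\B\langle I\rangle\cup\I$ are absorbed into the sign hypothesis $\alpha_B\le 0$. Once the correct decomposition is spotted, the whole argument reduces to a one-line arithmetic estimate.
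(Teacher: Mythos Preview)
Your proof is correct and follows essentially the same approach as the paper: set $I=\A\cap\I$, use connectedness of $G[\A]$ to get $\B\langle I\rangle\subseteq\A$, then bound the positive part $|I|$ against condition \eqref{condIBI} while absorbing the remaining blocks via $\alpha_B\le 0$. The paper compresses your three-term decomposition into the single inequality $\sum_{B\in\A}\alpha_B\le\sum_{B\in\B\langle I\rangle}\alpha_B$ and omits the explicit equality witness (which is immediate, e.g., at $\chi^{\{B\}}$ for any $B\in\I$), but the argument is the same.
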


\begin{proof}
    Let $\B$ be the set of blocks of $G$. Consider an independent blocks inequality $(\I,\alpha)$ and let $\A \subseteq \B$ be a set of blocks inducing a connected subgraph of $G$. The only positive summands of $\sum_{B \in \B}\alpha_B\chi_B^\A$ are $\alpha_B\chi_B^\A=1$ for $B \in \I \cap \A$. Since $G[\A]$ is connected we have $\chi^\A_B=1$ for $B \in \B\langle \I \cap \A \rangle$ and, hence, it follows
\begin{align*}
    \sum_{B \in \B}\alpha_B \chi^\A_B =\sum_{B \in \A}\alpha_B \le \sum_{B \in \B\langle \I \cap \A\rangle}\alpha_B = \vert \I \cap \A\vert+\sum_{B \in \B\langle \I \cap \A\rangle \setminus \I}\alpha_B \le 1,
\end{align*}
where the last inequality follows by condition \eqref{condIBI} setting $I \coloneqq \I \cap \A$.
\end{proof}

In the following the next observation will be useful that is easy to verify.

\begin{observation}
    \label{sum=1}
    Let $G$ be a connected graph, $\B$ its set of blocks, and $(\I,\alpha)$ an independent blocks inequality. Then
    \begin{align*}
        \sum_{B \in \B}\alpha_B=1.
    \end{align*}
\end{observation}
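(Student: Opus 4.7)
The plan is straightforward: directly partition the sum $\sum_{B \in \B}\alpha_B$ into three parts according to the three cases of the definition of an independent blocks inequality, namely $B \notin \B\langle\I\rangle$, $B \in \I$, and $B \in \B\langle\I\rangle \setminus \I$, and then apply each of the four bullet points of the definition in turn.

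First I would write
\[
\sum_{B \in \B}\alpha_B \;=\; \sum_{B \notin \B\langle\I\rangle}\alpha_B \;+\; \sum_{B \in \I}\alpha_B \;+\; \sum_{B \in \B\langle\I\rangle\setminus\I}\alpha_B.
\]
The first sum vanishes since $\alpha_B = 0$ for $B \notin \B\langle\I\rangle$. The second sum equals $|\I|$, since $\alpha_B = 1$ for every $B \in \I$. The third sum equals $-(|\I|-1)$ by the first equality condition in the definition. Adding these three contributions gives $0 + |\I| - (|\I|-1) = 1$, which is exactly the claim.

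The only potential subtlety is noting that $\I \subseteq \B\langle\I\rangle$ (so that the three index sets are indeed a disjoint partition of $\B$), but this is immediate from the definition of $\B\langle\I\rangle$ as the smallest block set inducing a connected subgraph containing $\I$. There is no real obstacle here; the observation is essentially a bookkeeping consequence of how the coefficients $\alpha_B$ are defined, and no use of condition~\eqref{condIBI} (nor of connectivity in $G$ beyond what is encoded in $\B\langle\I\rangle$) is needed.
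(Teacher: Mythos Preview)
Your argument is correct and is exactly the direct computation the paper has in mind; the paper states this as an observation that is ``easy to verify'' and gives no separate proof, and your partition into the three cases followed by applying the defining conditions is the natural way to verify it.
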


We will prove in \Cref{indblockfacet} that the independent blocks inequalities define facets. To simplify the proof, we will first describe an inductive construction for these inequalities; see \Cref{fig:Construction} for an illustration. Afterwards, we show that this construction actually yields all possible independent blocks inequalities. We also introduce the following notation. For a connected graph $G$ and a cut vertex $v \in X$ we denote by $\K_v$ the set of components of $G-v$ after adding back $v$ into each component together with its correspondent incident edges\Xn{, see \Cref{fig:K_v} for an example}. Furthermore, for an independent set of blocks $\I=\{B_1,\ldots,B_k\} \subseteq \B$ we denote with $\I_\ell$ the subset $\I_\ell=\{B_1,\ldots,B_\ell\} \subseteq \I$ for $\ell \in [k]$.

\begin{figure}
    \centering
    \begin{tikzpicture}
        \coordinate[label=center:$G$] (A) at (-1,1);
        \fill (0,0) circle (2pt);
        \fill (2,0) circle (2pt);
        \fill (1,-1) circle (2pt);
        \fill (1,1) circle (2pt);
        \fill (4,0) circle (2pt);
        \coordinate[label=above:$v$] (D) at (4,0);
        \fill (3,1) circle (2pt);
        \fill (3,-1) circle (2pt);
        \fill (6.5,1) circle (2pt);
        \fill (6.5,-1) circle (2pt);
        \fill (7.5,0) circle (2pt);
        \fill (9,0) circle (2pt);
        \fill (5.5,-2) circle (2pt);
        \fill (7.5,-2) circle (2pt);
        \fill (1.5,-1) circle (2pt);
        \fill (2,-1) circle (2pt);
        \fill (2,-0.5) circle (2pt);
        \fill (5.5,0) circle (2pt);
        \fill (3,0) circle (2pt);
        \fill (4,-1) circle (2pt);
        
        \draw (0,0) -- (1,1);
        \draw (0,0) -- (2,0);
        \draw (2,0) -- (1,1);
        \draw (2,0) -- (1,-1);
        \draw (2,0) -- (3,-1);
        \draw (1,-1) -- (3,-1);
        \draw (2,0) -- (3,1);
        \draw (2,0) -- (4,0);
        \draw (3,1) -- (4,0);
        \draw (5.5,0) -- (6.5,1);
        \draw (5.5,0) -- (6.5,-1);
        \draw (7.5,0) -- (6.5,1);
        \draw (7.5,0) -- (6.5,-1);
        \draw (6.5,-1) -- (5.5,-2);
        \draw (6.5,-1) -- (7.5,-2);
        \draw (7.5,-2) -- (5.5,-2);
        \draw (7.5,0) -- (9,0);
        \draw (4,0) -- (5.5,0);
        \draw (2,0) -- (1.5,-1);
        \draw (2,0) -- (2,-1);
        \draw (2,-0.5) -- (3,-1);
        \draw (6.5,1) -- (6.5,-1);
        \draw (3,1) -- (3,0);
        \draw (4,0) -- (4,-1);
    \end{tikzpicture}
    
    \bigskip
    
    \bigskip
    
    \begin{tikzpicture}
        \coordinate[label=center:$\K_v$] (A) at (-1,1);
        \fill (0,0) circle (2pt);
        \fill (2,0) circle (2pt);
        \fill (1,-1) circle (2pt);
        \fill (1,1) circle (2pt);
        \fill (4,0) circle (2pt);
        \coordinate[label=right:$v$] (D) at (4,0);
        \fill (3,1) circle (2pt);
        \fill (3,-1) circle (2pt);
        \fill (1.5,-1) circle (2pt);
        \fill (2,-1) circle (2pt);
        \fill (2,-0.5) circle (2pt);
        \fill (3,0) circle (2pt);
        
        \draw (0,0) -- (1,1);
        \draw (0,0) -- (2,0);
        \draw (2,0) -- (1,1);
        \draw (2,0) -- (1,-1);
        \draw (2,0) -- (3,-1);
        \draw (1,-1) -- (3,-1);
        \draw (2,0) -- (3,1);
        \draw (2,0) -- (4,0);
        \draw (3,1) -- (4,0);
        \draw (2,0) -- (1.5,-1);
        \draw (2,0) -- (2,-1);
        \draw (2,-0.5) -- (3,-1);
        \draw (3,1) -- (3,0);

        \fill (5,0) circle (2pt);
        \coordinate[label=above:$v$] (D) at (5,0);
        \fill (5,-1) circle (2pt);
        \draw (5,0) -- (5,-1);

        \fill (6,0) circle (2pt);
        \coordinate[label=left:$v$] (D) at (6,0);
        \fill (8.5,1) circle (2pt);
        \fill (8.5,-1) circle (2pt);
        \fill (9.5,0) circle (2pt);
        \fill (11,0) circle (2pt);
        \fill (7.5,-2) circle (2pt);
        \fill (9.5,-2) circle (2pt);
        \fill (7.5,0) circle (2pt);
        
        \draw (7.5,0) -- (8.5,1);
        \draw (7.5,0) -- (8.5,-1);
        \draw (9.5,0) -- (8.5,1);
        \draw (9.5,0) -- (8.5,-1);
        \draw (8.5,-1) -- (7.5,-2);
        \draw (8.5,-1) -- (9.5,-2);
        \draw (9.5,-2) -- (7.5,-2);
        \draw (9.5,0) -- (11,0);
        \draw (6,0) -- (7.5,0);
        \draw (8.5,1) -- (8.5,-1);
        
    \end{tikzpicture}
    \caption{Example for a graph $G$ and the components of $\K_v$.}
    \label{fig:K_v}
\end{figure}
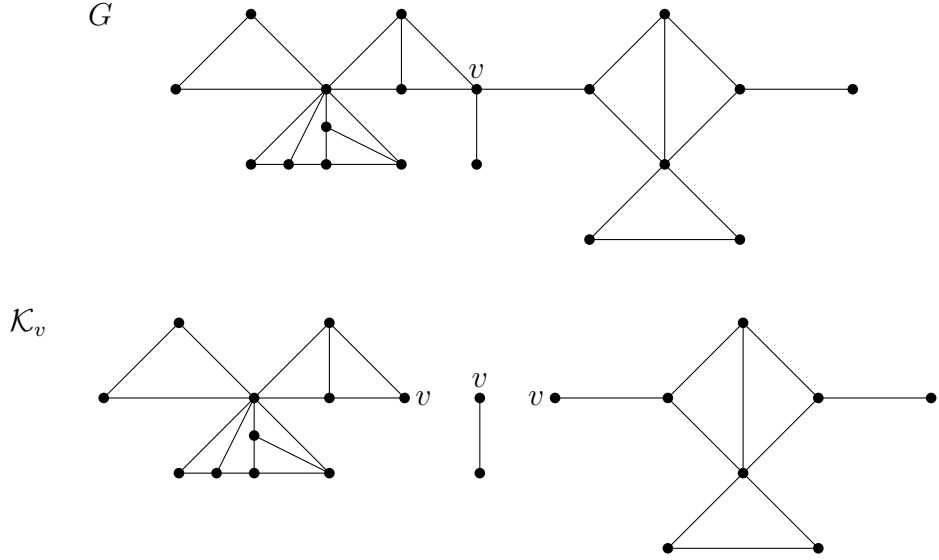

\begin{construction}
\label{construction}
Let $G$ be a connected graph, $\B$ its set of blocks, and $\I=\{B_1,\ldots,B_k\} \subseteq \B$ be an independent set of blocks. We construct pairs $(\I^{(\ell)},\alpha^{(\ell)})$, where $\I^{(\ell)} \subseteq \I_\ell$ and $\alpha^{(\ell)} \in \Z^\B$, for $\ell\in[k]$, such that the pair $(\I^{(\ell)},\alpha^{(\ell)})$ defines an independent blocks inequality. After possibly renumbering the blocks we can assume that $\B \langle\I_\ell \rangle \subsetneq \B \langle \I_{\ell+1} \rangle$ for all $\ell \in [k-1]$.
\begin{description}
    \item[\Xn{Base case}] \, \\ We start by setting $\I^{(1)}=\{B_1\}$, $\alpha^{(1)}_{B_1}=1$, and $\alpha^{(1)}_B=0$ for all $B \in \B \backslash \{B_1\}$. Hence, the first inequality is given by
\begin{align*}
    x_{B_1} \le 1,
\end{align*}
and this defines an independent blocks inequality.
\end{description}

\begin{description}
    \item[\Xn{Construction Step}]\, \\ Assume that $(\I^{(\ell)},\alpha^{(\ell)})$ has already been determined and by induction defines an independent blocks inequality. We obtain $(\I^{(\ell+1)},\alpha^{(\ell+1)})$ as follows. \\
    Consider the unique block path $P \subseteq G$ connecting $B_{\ell+1}$ with $\B\langle\I_\ell\rangle$ not including $B_{\ell+1}$ or any block of $\B\langle\I_\ell\rangle$. \\
    Let $v \in V(G)$ be the cut vertex joining $\B\langle\I_\ell\rangle$ with $P$ (or $B_{\ell+1}$ if $P=\emptyset$). \\
    Let $H \in \K_v$ such that $\sum_{B \in \B[H]} \alpha^{(\ell)}_B=1$.\footnote{\Xn{We will see in \Cref{weight1} that such $H$ exists and is uniquely determined.}}\Xr{We will see in Lemma 5.7 that such $H$ exists and is uniquely determined.} \Xr{Furthermore, we have}\Xn{Observe, that} $B_{\ell+1} \notin \B[H]$. \\
    Let $B'$ be the block of $H$ containing $v$.
    \begin{itemize}
    \item \Xn{Choose $A_\ell \in \B[P] \cup \{B'\}$}.
    \item \Xr{Let $A_\ell \in \B[P] \cup \{B'\}$ and s}\Xn{S}et $\alpha^{(\ell+1)}_{B_{\ell+1}}=1$, $\alpha^{(\ell+1)}_{A_\ell}=\alpha^{(\ell)}_{A_\ell}-1$, and $\alpha^{(\ell+1)}_{B}=\alpha^{(\ell)}_{B}$ for all $B \in \B \setminus \{B_{\ell+1},A_\ell\}$.
    \item If $A_\ell=B'$ and $\alpha^{(\ell)}_{B'}=1$, we set $\I^{(\ell+1)}=(\I^{(\ell)}\setminus \{B'\})\cup \{B_{\ell+1}\}$; otherwise we set $\I^{(\ell+1)}=\I^{(\ell)}\cup \{B_{\ell+1}\}$.
    \end{itemize}
\end{description}
\end{construction}

\begin{figure}
    \centering
    \begin{tikzpicture}
        
        \coordinate (O) at (150:0.5);
        \coordinate (P) at (150:0.5);
        \coordinate (B) at (0.25,0.433);
        \coordinate (C) at (0.25,-0.433);
        
        \draw[orange!20,thick] (0,0) -- (-1,0);
        \fill[orange!20] (0,0) .. controls (120:0.95) and ($(-1,0)+(90:0.4)$) .. (-1,0);
        \fill[orange!20] (0,0) .. controls (-120:0.95) and ($(-1,0)-(90:0.4)$) .. (-1,0);
        \draw (0,0) .. controls (120:0.95) and ($(-1,0)+(90:0.4)$) .. (-1,0);
        \draw (0,0) .. controls (-120:0.95) and ($(-1,0)-(90:0.4)$) .. (-1,0);
        
        \fill[orange!20] (-1.5,0) circle (0.5);
        \fill[orange!20] (-2.5,0) circle (0.5);
        
        \draw (-1.5,0) circle (0.5);
        \draw (-2.5,0) circle (0.5);
        \draw[line width=0.5mm] (-3.5,0) circle (0.5);
        
        \fill (-1,0) circle (2pt);
        \fill (-2,0) circle (2pt);
        \fill (-3,0) circle (2pt);
        
        \coordinate[label=center:$B_5$] (D) at (-3.5,0);
        
        \draw (-2.5,1) circle (0.5);
        \fill (-2.5,0.5) circle (2pt);
        
        \draw ($(-3.5,0)+(120:1)$) circle (0.5);
        \fill ($(-3.5,0)+(120:0.5)$) circle (2pt);
        
        \draw ($(-3.5,0)+(240:1)$) circle (0.5);
        \fill ($(-3.5,0)+(240:0.5)$) circle (2pt);
        
        \draw[cyan!20,thick] (0,0) -- ($2*(B)$);
        \fill[cyan!20] (0,0) .. controls (120:0.95) and ($2*(B)+(150:0.4)$) .. ($2*(B)$);
        \fill[cyan!20] (0,0) .. controls (0:0.95) and ($2*(B)-(150:0.4)$) .. ($2*(B)$);
        \draw (0,0) .. controls (120:0.95) and ($2*(B)+(150:0.4)$) .. ($2*(B)$);
        \draw (0,0) .. controls (0:0.95) and ($2*(B)-(150:0.4)$) .. ($2*(B)$);
        
        \fill[cyan!20] ($3*(B)$) circle (0.5);
        \fill[cyan!20] ($5*(B)$) circle (0.5);
        
        \fill ($2*(B)$) circle (2pt);
        \fill ($4*(B)$) circle (2pt);

        \draw ($3*(B)$) circle (0.5);
        \draw ($5*(B)$) circle (0.5);

        \coordinate[label=center:$B'$] (K) at (B);
        \coordinate[label=center:$A_{2,3}$] (H) at ($5*(B)$);

        \fill[cyan!20] ($5*(B)+(40:1)$) circle (0.5);
        \draw[line width=0.5mm] ($5*(B)+(40:1)$) circle (0.5);
        \fill ($5*(B)+(40:0.5)$) circle (2pt);
        \coordinate[label=center:$B_3$] (G) at ($5*(B)+(40:1)$);
        
        \fill[cyan!20] ($5*(B)+(330:1)$) circle (0.5);
        \draw[line width=0.5mm] ($5*(B)+(330:1)$) circle (0.5);
        \fill ($5*(B)+(330:0.5)$) circle (2pt);
        \coordinate[label=center:$B_4$] (I) at ($5*(B)+(330:1)$);
        
        \fill[cyan!20] ($5*(B)+(110:1)$) circle (0.5);
        \draw[line width=0.5mm] ($5*(B)+(110:1)$) circle (0.5);
        \fill ($5*(B)+(110:0.5)$) circle (2pt);
        \coordinate[label=center:$B_2$] (J) at ($5*(B)+(110:1)$);
        
        \draw ($(I)+(1,0)$) circle (0.5);
        \fill ($(I)+(0.5,0)$) circle (2pt);
        
        \coordinate[label=left:$-2$] (M) at ($5*(B)-(0.5,0)$);
        \coordinate[label=left:$+1$] (N) at ($(J)+(-0.5,0)$);
        \coordinate[label=right:$+1$] (O) at ($(G)+(0.5,0)$);
        \coordinate[label=below:$+1$] (P) at ($(I)+(0,-0.5)$);
        
        \draw[cyan!20,thick] (0,0) -- ($2*(C)$);
        \fill[cyan!20] (0,0) .. controls (0:0.95) and ($2*(C)-(210:0.4)$) .. ($2*(C)$);
        \fill[cyan!20] (0,0) .. controls (240:0.95) and ($2*(C)+(210:0.4)$) .. ($2*(C)$);
        \draw (0,0) .. controls (0:0.95) and ($2*(C)-(210:0.4)$) .. ($2*(C)$);
        \draw (0,0) .. controls (240:0.95) and ($2*(C)+(210:0.4)$) .. ($2*(C)$);
        
        \fill[cyan!20] ($3*(C)$) circle (0.5);
        \draw ($3*(C)$) circle (0.5);
        \fill[cyan!20] ($5*(C)$) circle (0.5);
        \draw[line width=0.5mm] ($5*(C)$) circle (0.5);
        
        \fill ($2*(C)$) circle (2pt);
        \fill ($4*(C)$) circle (2pt);
        \fill ($3*(C)+(210:0.5)$) circle (2pt);
        
        \draw ($3*(C)+(210:1)$) circle (0.5);
        
        \coordinate[label=center:$A_1$] (F) at ($3*(C)$);
        \coordinate[label=center:$B_1$] (E) at ($5*(C)$);
        
        \coordinate[label=right:$-1$] (L) at ($3*(C)+(0.5,0)$);
        \coordinate[label=right:$+1$] (L) at ($5*(C)+(0.5,0)$);
        
        \coordinate[label=left:$v$] (A) at (0,0);
        \fill (A) circle (2pt);
        
        \draw[decorate,decoration={brace,mirror,amplitude=12pt}] (-3,-0.4) -- (-0.23,-0.4) node[midway, below,yshift=-12pt,]{$P$};
        \draw[decorate,decoration={brace,mirror,amplitude=12pt}] (0.45,0) -- ($(I)+(1.65,-0.15)$) node[midway, below,yshift=-12pt,]{$H$};
    \end{tikzpicture}
    \caption{This illustrates an example for a construction step in \Cref{construction}. The blocks are depicted by cycles and the only vertices drawn are the cut vertices. With the notation of \Cref{construction}, we have $\ell=4$, $\I^{(4)}=\{B_1,B_2,B_3,B_4\}$, $\alpha^{(4)}_{B_i}=1$ for $i\in[4]$, $\alpha^{(4)}_{A_1}=-1$, $\alpha^{(4)}_{A_{2,3}}=-2$ and $\alpha^{(4)}_B=0$ for all other blocks $B \in \B$. The blocks of $\I^{(4)}$ and the block $B_5$ have thicker borders, the blocks of $\B\langle\I_4\rangle$ are colored in blue and the blocks of $P$ are colored in orange. The candidates for the block $A_4$ are the three blocks belonging to $P$ and the block $B'$. Then we set $\alpha^{(5)}_{B_5}=1$, $\alpha^{(5)}_{A_4}=-1$ and $\alpha^{(5)}_{B}=\alpha^{(4)}_B$ for all $B \in \B \setminus \{B_5,A_4\}$.}
    \label{fig:Construction}
\end{figure}
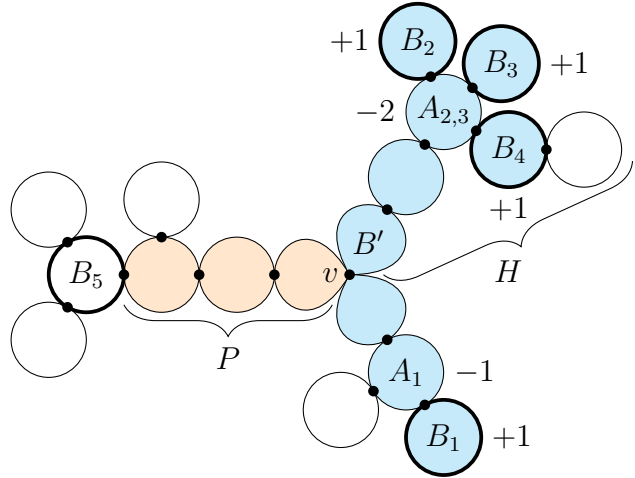

The following lemma shows that \Cref{construction} is well-defined and yields an independent blocks inequality.

\begin{lemma}
\label{weight1}
Let $G$ be a connected graph and $\B$ its set of blocks.
\begin{enumerate}[(i)]
    \item Let $v \in X$ be a cut vertex of $G$ and let $(\I,\alpha)$ be an independent blocks inequality. Then there exists a unique $H \in \K_v$ with
    \begin{align*}
        \sum_{B \in \B[H]}\alpha_B=1,
    \end{align*}
    and we have
    \begin{align*}
        \sum_{B \in \B[H']}\alpha_B=0
    \end{align*}
    for all $H' \in \K_v \setminus \{H\}$.
    \item Let $\I=\{B_1,\ldots,B_k\} \subseteq \B$ be an independent set of blocks. $(\I^{(\ell)},\alpha^{(\ell)})$, as defined by \Cref{construction}, is an independent blocks inequality for all $\ell \in [k]$.
\end{enumerate}
\end{lemma}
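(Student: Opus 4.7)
\emph{Plan.} For part (i), the key identity is Observation~\ref{sum=1}, $\sum_{B\in\B}\alpha_B=1$, combined with the fact that the components in $\K_v$ partition the blocks of $G$ (every block either sits in one component of $G-v$ or contains $v$, in which case $B-v$ lies in exactly one component). Setting $s_H\coloneqq\sum_{B\in\B[H]}\alpha_B$, one has $\sum_{H\in\K_v}s_H=1$. I would apply Lemma~\ref{supporting} twice: to the incidence vector $\chi^{\B[H]}$ (a vertex of $\CBP(G)$ since $G[\B[H]]=H$ is connected) to obtain $s_H\le 1$, and to $\chi^{\B\setminus\B[H]}$ (a vertex since all other components of $\K_v$ share $v$ and are therefore connected through it) to obtain $1-s_H\le 1$, i.e.\ $s_H\ge 0$. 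Integrality of the $\alpha_B$ then forces $s_H\in\{0,1\}$, and the sum identity singles out a unique $H$.

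For part (ii), I would argue by induction on $\ell$, the base case $\ell=1$ reducing to the box inequality $x_{B_1}\le 1$. For the inductive step, part~(i) applied to $(\I^{(\ell)},\alpha^{(\ell)})$ guarantees the unique $H$ used in Construction~\ref{construction}, and the ordering $\B\langle\I_\ell\rangle\subsetneq\B\langle\I_{\ell+1}\rangle$ together with the independence of $\I$ forces $B_{\ell+1}\notin\B[H]$ (the component containing $B_{\ell+1}$ lies outside $\B\langle\I_\ell\rangle$ and hence carries zero $\alpha^{(\ell)}$-weight). The elementary IBI axioms fall out directly from the construction: $\alpha^{(\ell+1)}_B=1$ on $\I^{(\ell+1)}$ (the ``replacement'' branch is designed precisely so that removing $B'=A_\ell$ from $\I^{(\ell)}$ coincides with $\alpha^{(\ell)}_{B'}=1$ becoming $\alpha^{(\ell+1)}_{B'}=0$), nonpositive integrality on $\B\langle\I^{(\ell+1)}\rangle\setminus\I^{(\ell+1)}$, and the preserved total sum $\sum\alpha^{(\ell+1)}_B=1$.

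Two conditions require real work. The first is that $\alpha^{(\ell+1)}_B=0$ for $B\notin\B\langle\I^{(\ell+1)}\rangle$: the delicate situation is the replacement branch with $\I^{(\ell)}\cap\B[H]=\{B'\}$, where $\B\langle\I^{(\ell+1)}\rangle$ no longer enters $H$. Here I would invoke part~(i) again: $\sum_{B\in\B[H]}\alpha^{(\ell)}_B=1$, $\alpha^{(\ell)}_{B'}=1$, and $\alpha^{(\ell)}_B\le 0$ on $\B[H]\setminus\{B'\}$ jointly force $\alpha^{(\ell)}\equiv 0$ on $\B[H]\setminus\{B'\}$, so $\alpha^{(\ell+1)}$ vanishes on all of $\B[H]$. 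The equality $\sum_{B\in\B\langle\I^{(\ell+1)}\rangle\setminus\I^{(\ell+1)}}\alpha^{(\ell+1)}_B=-(|\I^{(\ell+1)}|-1)$ then follows by summing all coordinates.

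The second and hardest step is verifying \eqref{condIBI} for every $I\subseteq\I^{(\ell+1)}$. I would split on whether $B_{\ell+1}\in I$. If $B_{\ell+1}\notin I$, then $B_{\ell+1}\notin\B\langle I\rangle$ by the ordering, and a short case check (using $\alpha^{(\ell+1)}_{B'}=0$ in the replacement branch, and the fact that $A_\ell\in\B[P]$ lies outside $\B\langle\I_\ell\rangle\supseteq\B\langle I\rangle$) shows $\sum_{B\in\B\langle I\rangle\setminus\I^{(\ell+1)}}\alpha^{(\ell+1)}_B\le\sum_{B\in\B\langle I\rangle\setminus\I^{(\ell)}}\alpha^{(\ell)}_B$, so the inductive hypothesis on $I\subseteq\I^{(\ell)}$ delivers the desired bound. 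The case $B_{\ell+1}\in I$ (with $|I|\ge 2$, the case $|I|=1$ being trivial) is the subtle one: the key trick is to apply Lemma~\ref{supporting} to $\A=\B\langle I\rangle\cup\B[H]$, which is connected because $v$ lies in both parts. This yields $\sum_{B\in\A}\alpha^{(\ell)}_B\le 1$, and combined with $\sum_{B\in\B[H]}\alpha^{(\ell)}_B=1$ from part~(i) via inclusion--exclusion gives $\sum_{B\in\B\langle I\rangle}\alpha^{(\ell)}_B\le\sum_{B\in\B\langle I\rangle\cap\B[H]}\alpha^{(\ell)}_B$. Tracking the $+1$ at $B_{\ell+1}$ and $-1$ at $A_\ell$ and using $|\B\langle I\rangle\cap\I^{(\ell+1)}|\ge|I|$ then yields \eqref{condIBI}; the main obstacle is the hard subcase $A_\ell=B'\notin\B\langle I\rangle$ (equivalently $\B\langle I\rangle\cap\B[H]=\emptyset$), in which the augmentation bound collapses to $\sum_{B\in\B\langle I\rangle}\alpha^{(\ell)}_B\le 0$, exactly closing the estimate.
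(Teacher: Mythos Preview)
Your argument for part~(i) is correct and follows the same underlying idea as the paper's proof: both rely on Observation~\ref{sum=1}, the validity of $(\I,\alpha)$ on connected subgraphs (Lemma~\ref{supporting}), and integrality of the $\alpha_B$. Your packaging is arguably cleaner --- establishing $0\le s_H\le 1$ with $s_H\in\Z$ and $\sum_H s_H=1$ in one stroke --- whereas the paper argues separately that no two components can carry weight~$1$, that at least one must, and finally that no other component can carry negative weight. The content is the same.

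For part~(ii) you go considerably further than the paper, which dispatches the entire claim as ``straight-forward induction on $\ell$'' without checking any of the IBI axioms. Your sketch actually carries out that induction, and the strategy is sound: the delicate point you correctly identify is the support condition in the replacement branch (handled via $\sum_{B\in\B[H]\setminus\{B'\}}\alpha^{(\ell)}_B=0$ together with nonpositivity) and condition~\eqref{condIBI} for subsets $I\ni B_{\ell+1}$. Your key trick there --- applying Lemma~\ref{supporting} with the inductive $\alpha^{(\ell)}$ to the connected set $\B\langle I\rangle\cup\B[H]$ and combining with $\sum_{\B[H]}\alpha^{(\ell)}_B=1$ via inclusion--exclusion --- is exactly the kind of argument needed, and the subcase $A_\ell=B'\notin\B\langle I\rangle$ with $\B\langle I\rangle\cap\B[H]=\emptyset$ does close as you describe. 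In short, your part~(ii) supplies the details the paper omits.
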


\begin{proof}
    \begin{enumerate}[$(i)$]
        \item For any $H'' \in \K_v$ we have $\sum_{B \in \B[H'']}\alpha_B=\sum_{B \in \B}\alpha_B\chi^{\B[H'']} \le 1$ since $(\I,\alpha)$ defines an independent blocks inequality. Suppose there are two subgraphs $H_1 \neq H_2 \in \K_v$ such that $\sum_{B \in \B[H_1]}\alpha_B=\sum_{B \in \B[H_2]}\alpha_B=1$. Then we would have $\sum_{B \in \B[H_1 \cup H_2]}\alpha_B=2$ which is a contradiction since $\B[H_1 \cup H_2]=\B[H_1]\cup\B[H_2]$ induces a connected subgraph of $G$ but $(\I,\alpha)$ defines a feasible independent blocks inequality. If there is no $H \in \K_v$ with $\sum_{B \in \B[H]}\alpha_B=1$, we would have $\sum_{B \in \B}\alpha_B \le 0$ which is again a contradiction due to \Cref{sum=1}. Hence, there is exactly one subgraph $H \in~\K_v$ with $\sum_{B \in \B[H]}\alpha_B=1$. Suppose now there exists an $H' \in K_v\setminus\{H\}$ such that $\sum_{B \in \B[H']}\alpha_B<0$. By \Cref{sum=1} we have
        \begin{align*}
            1=\sum_{B \in \B}\alpha_B=\sum_{B \in \B[H']}\alpha_B+\sum_{H'' \in K_v\setminus \{H'\}}\sum_{B\in\B[H'']}\alpha_B,
        \end{align*}
        and this would imply $\sum_{H'' \in K_v\setminus \{H'\}}\sum_{B\in\B[H'']}\alpha_B > 1$, which is a contradiction since $\bigcup_{H'' \in K_v\setminus \{H'\}}H''$ is a connected subgraph.
        \item This follows by straight-forward induction on $\ell$, where the base case is the fact that $x_{B_1} \le 1$ trivially forms an independent blocks inequality. \qedhere 
    \end{enumerate}
    
\end{proof}

The next lemma shows that \Cref{construction} yields all possible independent blocks inequalities.

\begin{lemma}
\label{lemma}
Let $G$ be a connected graph. Any independent blocks inequality can be obtained by \Cref{construction}.
\end{lemma}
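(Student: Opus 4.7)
The plan is to proceed by induction on $|\B\langle\I\rangle|$. In the base case $|\B\langle\I\rangle|=1$, necessarily $\I=\{B\}$ and the inequality is $x_B\le 1$, which is exactly the initialization step of \Cref{construction}. For the inductive step with $|\B\langle\I\rangle|\ge 2$ (so $|\I|\ge 2$), I observe that every leaf of $\T\langle\I\rangle$ must be a block of $\I$: any non-$\I$ block leaf or cut-vertex leaf could be pruned from $\T\langle\I\rangle$ while still containing $\I$, contradicting minimality. Pick a leaf $B_k\in\I$ of $\T\langle\I\rangle$, and let $P=(X_1,\ldots,X_m)$, $v$, and $B'$ be as in \Cref{construction}. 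Iterating \Cref{weight1}(i) at each cut vertex along $P$, the partial sums $S_i\coloneqq 1+\sum_{j\le i}\alpha_{X_j}$ begin at $S_0=1$, are non-increasing (since $\alpha_{X_j}\le 0$), and must take values in $\{0,1\}$. Thus exactly one of \emph{Case B} (a unique $j^\ast$ with $\alpha_{X_{j^\ast}}=-1$ and all other $\alpha_{X_j}=0$) or \emph{Case A} (all $\alpha_{X_j}=0$) holds. In Case A, \Cref{weight1}(i) at $v$ together with $S_m=1$ forces $\alpha_{B'}\le 0$: the alternative $\alpha_{B'}=1$, i.e.\ $B'\in\I$, would violate independence of $\I$ if $P=\emptyset$ and would violate \eqref{condIBI} at $I=\{B_k,B'\}$ if $P\ne\emptyset$. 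I split Case~A into \emph{Sub-case A2} ($\alpha_{B'}<0$) and \emph{Sub-case A1} ($\alpha_{B'}=0$). A careful application of \eqref{condIBI} to pairs $\{B_k,B_j\}$ of leaves allows me to choose $B_k$ so that the reverse step avoids the obstructing configuration ``Sub-case A1 with $P=\emptyset$'' (which would make the extended Construction input non-independent).

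With such a $B_k$, I set $A_{k-1}\coloneqq X_{j^\ast}$ in Case B or $A_{k-1}\coloneqq B'$ otherwise; $\I^{(k-1)}\coloneqq(\I\setminus\{B_k\})\cup\{B'\}$ in Sub-case A1 or $\I^{(k-1)}\coloneqq\I\setminus\{B_k\}$ otherwise; and $\alpha^{(k-1)}\coloneqq\alpha-e_{B_k}+e_{A_{k-1}}$. I then verify that $(\I^{(k-1)},\alpha^{(k-1)})$ is again an independent blocks inequality. The independence of $\I^{(k-1)}$ in Sub-case A1 follows from \eqref{condIBI} applied to $\{B_k,B_j\}$ which rules out $B'$ sharing a vertex with any $B_j\in\I\setminus\{B_k\}$. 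The support, sign, integrality, and equality conditions follow by direct bookkeeping using $B_k,X_1,\ldots,X_m\notin\B\langle\I^{(k-1)}\rangle$. The only subtle verification of \eqref{condIBI} for $I\subseteq\I^{(k-1)}$ arises in Sub-case A2 when $B'\in\B\langle I\rangle$: here the $+1$ shift at $B'$ must be absorbed, which is achieved by invoking \eqref{condIBI} for $(\I,\alpha)$ at $I\cup\{B_k\}$ instead, since $\B\langle I\cup\{B_k\}\rangle\setminus\I$ differs from $\B\langle I\rangle\setminus\I$ only by the zero-weight blocks $X_1,\ldots,X_m$ and carries the tighter right-hand side $-(|I|-1)-1$.

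Applying the induction hypothesis to $(\I^{(k-1)},\alpha^{(k-1)})$, whose $|\B\langle\I^{(k-1)}\rangle|=|\B\langle\I\rangle|-m-1<|\B\langle\I\rangle|$, yields a \Cref{construction} sequence producing it. Appending $B_k$ as the next block and using $A_{k-1}$ as the decrement reproduces $(\I,\alpha)$ via the forward update $\alpha=\alpha^{(k-1)}+e_{B_k}-e_{A_{k-1}}$: the uniqueness assertion of \Cref{weight1}(i) guarantees that \Cref{construction} selects the correct component $H\in\K_v$, the block $B'$ of $H$ containing $v$ agrees with the block denoted $B'$ above, and in Sub-case A1 the replacement rule correctly removes $B'$ from $\I^{(k-1)}$ so that $\I^{(k)}=\I$. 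The extended sequence $\I^{(k-1)}\cup\{B_k\}$ is an independent set of blocks by the leaf-selection step, so it is an admissible input to \Cref{construction}, and the ordering assumption $\B\langle\I^{(k-1)}\rangle\subsetneq\B\langle\I^{(k-1)}\cup\{B_k\}\rangle$ holds since $B_k\notin\B\langle\I^{(k-1)}\rangle$. The main obstacle in the argument is the leaf-selection step ruling out ``Sub-case A1 with $P=\emptyset$'', which hinges on careful use of \eqref{condIBI} applied to pairs of leaves of $\T\langle\I\rangle$.
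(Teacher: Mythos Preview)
Your approach is essentially the same as the paper's: induct on $|\B\langle\I\rangle|$, peel off a leaf block $B_k\in\I$, reverse one step of \Cref{construction} to obtain a smaller independent blocks inequality $(\I^{(k-1)},\alpha^{(k-1)})$, and appeal to induction. Your Case~B corresponds exactly to the paper's case $\sum_{B\in\B[K]}\alpha_B=0$, and your Case~A (Sub-cases A1/A2) to the paper's case $\sum_{B\in\B[K]}\alpha_B=1$ with $\beta_A=1$ or $\beta_A\le 0$, respectively.

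The main difference is that you are considerably more careful: the paper simply asserts that $(\I',\beta)$ ``can be easily verified'' to be an independent blocks inequality and leaves the re-attachment step implicit, whereas you spell out the bookkeeping for the support, sign, and \eqref{condIBI} conditions, and you explicitly flag the subtlety that the extended input sequence to \Cref{construction} must itself be independent. This last point---your ``Sub-case~A1 with $P=\emptyset$'' obstruction, where $B'$ and $B_k$ would share the vertex $v$---is glossed over in the paper's proof. Your resolution via leaf selection using \eqref{condIBI} on pairs is plausible, though only sketched; the paper instead sidesteps the issue by allowing $A$ to be \emph{any} block at $v$ not in $K$, which gives enough freedom but is not elaborated. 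One minor point: your phrase ``let $\ldots B'$ be as in \Cref{construction}'' is ambiguous, since in \Cref{construction} $B'$ is determined by the \emph{previous} step's $\alpha^{(\ell)}$, which you do not yet have; you should state directly that $B'$ is any block containing $v$ lying in a component of $\K_v$ other than the one containing $B_k$ (and, as the paper does, note that any such choice works).
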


\begin{proof}
We show the statement by induction on the number of blocks in $\B \langle \I \rangle$. If $\vert \B \langle \I \rangle \vert =1$, the independent blocks inequality is simply $x_B \le 1$ for $B \in \B$, which is the base case of \Cref{construction}. Suppose now that $\vert \B \langle \I \rangle \vert >1$. In the following, we deconstruct an independent blocks inequality by undoing a construction step, see \Cref{fig:Proof} for an illustration. Therefore pick a block $B' \in \I$ such that $\B\langle\I\backslash \{B'\}\rangle \subsetneq \B\langle\I\rangle$. Consider the block path $P \subseteq G$ connecting $B'$ with $\B\langle\I\backslash \{B'\}\rangle$ not including $B'$ or any block of $\B\langle\I\backslash \{B'\}\rangle$. Let $v \in V(G)$ be the cut vertex joining $\B\langle\I\backslash \{B'\}\rangle$ with $P$ (or $B'$ if $P=\emptyset$). Let $K \in \K_v$ be the component containing $B'$. By \Cref{weight1} we have $\sum_{B \in \B[K]}\alpha_B \in \{0,1\}$. If $\sum_{B \in \B[K]}\alpha_B =0$, this implies that $\alpha_A=-1$ for a unique $A \in \B[P]$, since $B'$ is the only block $B\in \B[K]$ with $\alpha_B>0$. In this case, set $\I'\coloneqq \I \setminus \{B'\}$, $\beta_{B'} \coloneqq0$,  $\beta_{A}\coloneqq 0$ and $\beta_B\coloneqq \alpha_B$ for all $B \in \B\setminus\{B',A\}$ and we claim that $(\I',\beta)$ is an independent blocks inequality. If $\sum_{B \in \B[K]}\alpha_B =1$, we pick any block $A \in \B$ containing $v$ that does not belong to $K$ and set $\beta_{B'}\coloneqq 0$, $\beta_{A}\coloneqq \alpha_{A}+1$ and $\beta_B\coloneqq \alpha_B$ for all $B \in \B \setminus \{B',A\}$. Furthermore, if $\beta_{A}=1$, we set $\I'=(\I \cup \{A\})\setminus\{B'\}$, and in the other cases, i.e.. $\beta_{A} \le 0$, we set $\I'=\I \setminus \{B'\}$. Again, we claim that $(\I',\beta)$ is an independent blocks inequality, what can be easily verified in both cases.
\end{proof}

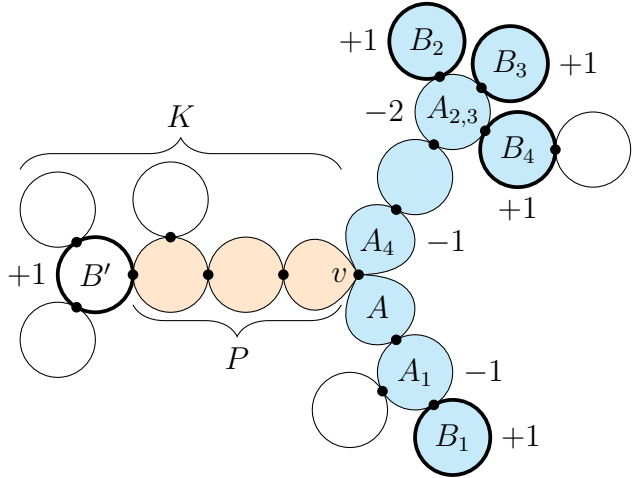
\begin{figure}
    \centering
    \begin{tikzpicture}
        
        \coordinate (O) at (150:0.5);
        \coordinate (P) at (150:0.5);
        \coordinate (B) at (0.25,0.433);
        \coordinate (C) at (0.25,-0.433);
        
        \draw[orange!20,thick] (0,0) -- (-1,0);
        \fill[orange!20] (0,0) .. controls (120:0.95) and ($(-1,0)+(90:0.4)$) .. (-1,0);
        \fill[orange!20] (0,0) .. controls (-120:0.95) and ($(-1,0)-(90:0.4)$) .. (-1,0);
        \draw (0,0) .. controls (120:0.95) and ($(-1,0)+(90:0.4)$) .. (-1,0);
        \draw (0,0) .. controls (-120:0.95) and ($(-1,0)-(90:0.4)$) .. (-1,0);
        
        \fill[orange!20] (-1.5,0) circle (0.5);
        \fill[orange!20] (-2.5,0) circle (0.5);
        
        \draw (-1.5,0) circle (0.5);
        \draw (-2.5,0) circle (0.5);
        \draw[line width=0.5mm] (-3.5,0) circle (0.5);
        
        \fill (-1,0) circle (2pt);
        \fill (-2,0) circle (2pt);
        \fill (-3,0) circle (2pt);
        
        \coordinate[label=center:$B'$] (D) at (-3.5,0);
        \coordinate[label=left:$+1$] (R) at ($(D)+(-0.5,0)$);
        
        \draw (-2.5,1) circle (0.5);
        \fill (-2.5,0.5) circle (2pt);
        
        \draw ($(-3.5,0)+(120:1)$) circle (0.5);
        \fill ($(-3.5,0)+(120:0.5)$) circle (2pt);
        
        \draw ($(-3.5,0)+(240:1)$) circle (0.5);
        \fill ($(-3.5,0)+(240:0.5)$) circle (2pt);
        
        \draw[cyan!20,thick] (0,0) -- ($2*(B)$);
        \fill[cyan!20] (0,0) .. controls (120:0.95) and ($2*(B)+(150:0.4)$) .. ($2*(B)$);
        \fill[cyan!20] (0,0) .. controls (0:0.95) and ($2*(B)-(150:0.4)$) .. ($2*(B)$);
        \draw (0,0) .. controls (120:0.95) and ($2*(B)+(150:0.4)$) .. ($2*(B)$);
        \draw (0,0) .. controls (0:0.95) and ($2*(B)-(150:0.4)$) .. ($2*(B)$);
        
        \fill[cyan!20] ($3*(B)$) circle (0.5);
        \fill[cyan!20] ($5*(B)$) circle (0.5);
        
        \fill ($2*(B)$) circle (2pt);
        \fill ($4*(B)$) circle (2pt);

        \draw ($3*(B)$) circle (0.5);
        \draw ($5*(B)$) circle (0.5);

        \coordinate[label=center:$A_4$] (K) at (B);
        \coordinate[label=center:$A_{2,3}$] (H) at ($5*(B)$);
        
        \coordinate[label=right:$-1$] (S) at ($(K)+(0.5,0)$);

        \fill[cyan!20] ($5*(B)+(40:1)$) circle (0.5);
        \draw[line width=0.5mm] ($5*(B)+(40:1)$) circle (0.5);
        \fill ($5*(B)+(40:0.5)$) circle (2pt);
        \coordinate[label=center:$B_3$] (G) at ($5*(B)+(40:1)$);
        
        \fill[cyan!20] ($5*(B)+(330:1)$) circle (0.5);
        \draw[line width=0.5mm] ($5*(B)+(330:1)$) circle (0.5);
        \fill ($5*(B)+(330:0.5)$) circle (2pt);
        \coordinate[label=center:$B_4$] (I) at ($5*(B)+(330:1)$);
        
        \fill[cyan!20] ($5*(B)+(110:1)$) circle (0.5);
        \draw[line width=0.5mm] ($5*(B)+(110:1)$) circle (0.5);
        \fill ($5*(B)+(110:0.5)$) circle (2pt);
        \coordinate[label=center:$B_2$] (J) at ($5*(B)+(110:1)$);
        
        \draw ($(I)+(1,0)$) circle (0.5);
        \fill ($(I)+(0.5,0)$) circle (2pt);
        
        \coordinate[label=left:$-2$] (M) at ($5*(B)-(0.5,0)$);
        \coordinate[label=left:$+1$] (N) at ($(J)+(-0.5,0)$);
        \coordinate[label=right:$+1$] (O) at ($(G)+(0.5,0)$);
        \coordinate[label=below:$+1$] (P) at ($(I)+(0,-0.5)$);
        
        \draw[cyan!20,thick] (0,0) -- ($2*(C)$);
        \fill[cyan!20] (0,0) .. controls (0:0.95) and ($2*(C)-(210:0.4)$) .. ($2*(C)$);
        \fill[cyan!20] (0,0) .. controls (240:0.95) and ($2*(C)+(210:0.4)$) .. ($2*(C)$);
        \draw (0,0) .. controls (0:0.95) and ($2*(C)-(210:0.4)$) .. ($2*(C)$);
        \draw (0,0) .. controls (240:0.95) and ($2*(C)+(210:0.4)$) .. ($2*(C)$);
        
        \fill[cyan!20] ($3*(C)$) circle (0.5);
        \draw ($3*(C)$) circle (0.5);
        \fill[cyan!20] ($5*(C)$) circle (0.5);
        \draw[line width=0.5mm] ($5*(C)$) circle (0.5);
        
        \fill ($2*(C)$) circle (2pt);
        \fill ($4*(C)$) circle (2pt);
        \fill ($3*(C)+(210:0.5)$) circle (2pt);
        
        \draw ($3*(C)+(210:1)$) circle (0.5);
        
        \coordinate[label=center:$A_1$] (F) at ($3*(C)$);
        \coordinate[label=center:$B_1$] (E) at ($5*(C)$);
        \coordinate[label=center:$A$] (Q) at (C);
        
        \coordinate[label=right:$-1$] (L) at ($3*(C)+(0.5,0)$);
        \coordinate[label=right:$+1$] (L) at ($5*(C)+(0.5,0)$);
        
        \coordinate[label=left:$v$] (A) at (0,0);
        \fill (A) circle (2pt);
        
        \draw[decorate,decoration={brace,mirror,amplitude=12pt}] (-3,-0.4) -- (-0.23,-0.4) node[midway, below,yshift=-12pt,]{$P$};

        \draw[decorate,decoration={brace,amplitude=12pt}] (-4.5,1.4) -- (-0.23,1.4) node[midway, above,yshift=12pt,]{$K$};
    \end{tikzpicture}
    \caption{This illustrates the second case of the proof of \Cref{lemma} in an example. The blocks are depicted by cycles and the only vertices drawn are the cut vertices. In this case we have $\I=\{B_1,B_2,B_3,B_4,B'\}$, $\alpha_{B'}=\alpha_{B_i}=1$ for $i\in[4]$, $\alpha_{A_1}=\alpha_{A_4}=-1$, $\alpha_{A_{2,3}}=-2$ and $\alpha_B=0$ for all other blocks $B \in \B$. The blocks of $\I$ have thicker borders, the blocks of $\B\langle\I\setminus\{B'\}\rangle$ are colored in blue and the blocks of $P$ are colored in orange. We then set $\beta_{B'}=0$, $\beta_{A}=1$, $\beta_B=\alpha_B$ for all $B \in \B \setminus \{B',A\}$ and $\I'=\{B_1,B_2,B_3,B_4,A\}$. It is also possible to set $\beta_{B'}=\beta_{A_4}=0$ and $\beta_B=\alpha_B$ for all $B \in \B\setminus\{B',A_4\}$. In this case $\I'$ is given by $\I'=\{B_1,B_2,B_3,B_4\}$.}
    \label{fig:Proof}
\end{figure}

\begin{theorem}
    \label{indblockfacet}
    Let $G$ be a connected graph. Each independent blocks inequality $(\I,\alpha)$ defines a facet of $\CBP(G)$.
\end{theorem}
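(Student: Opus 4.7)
The plan is to combine Lemma \ref{supporting} with Lemma \ref{lemma} to reduce the proof to an induction on $\ell$ over Construction \ref{construction}. Lemma \ref{supporting} already tells us that $\sum_{B\in\B}\alpha_Bx_B\le 1$ defines a supporting hyperplane, and Lemma \ref{lemma} says that every independent blocks inequality arises from the construction, so it suffices to prove that $(\I^{(\ell)},\alpha^{(\ell)})$ defines a facet for every $\ell$. By Proposition \ref{dimension} this amounts to exhibiting $|\B|$ affinely independent vertices $\chi^\A$ of $\CBP(G)$ with $\sum_B\alpha^{(\ell)}_B\chi^\A_B=1$. The base case $\ell=1$ is the box inequality $x_{B_1}\le 1$, which is a facet by Theorem \ref{theoremBox}.

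For the inductive step I assume the facet property for $(\I^{(\ell)},\alpha^{(\ell)})$, witnessed by an affinely independent family $\chi^{\A_1},\dots,\chi^{\A_{|\B|}}$ of vertices, and construct such a family for $(\I^{(\ell+1)},\alpha^{(\ell+1)})$. The key observation is that $\alpha^{(\ell+1)}-\alpha^{(\ell)}$ is supported on only two coordinates, namely $B_{\ell+1}$ and $A_\ell$, so for any connected $\A\subseteq\B$
\[
\sum_B\alpha^{(\ell+1)}_B\chi^\A_B \;=\; \sum_B\alpha^{(\ell)}_B\chi^\A_B \;+\; \chi^\A_{B_{\ell+1}}-\chi^\A_{A_\ell}.
\]
In particular, an old facet vertex $\chi^{\A_i}$ still lies on the new hyperplane if and only if $\chi^{\A_i}_{B_{\ell+1}}=\chi^{\A_i}_{A_\ell}$. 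For indices where $A_\ell\in\A_i$ but $B_{\ell+1}\notin\A_i$, I would replace $\A_i$ by $\A_i\cup\B[P]\cup\{B_{\ell+1}\}$; this set is connected because $A_\ell$ attaches both $\B[P]$ and $B_{\ell+1}$ to $\A_i$, and a short calculation using that the blocks of $\B[P]$ carry $\alpha^{(\ell)}$-weight $0$ shows the new set satisfies the new equation (only $B_{\ell+1}$ contributes a new $+1$, exactly offsetting the $-1$ drop at $A_\ell$). The symmetric case $A_\ell\notin\A_i$, $B_{\ell+1}\in\A_i$ is handled dually by removing $\B[P]\cup\{B_{\ell+1}\}$ from $\A_i$ (after checking that connectivity is preserved), while the other two cases need no modification.

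The main obstacle is preserving affine independence through these modifications. I plan to strengthen the inductive hypothesis so as to track a specific vertex family throughout the induction that is adapted to the step-$\ell+1$ changes. A natural candidate consists of the ``base'' vertex $\chi^{\B\langle\I^{(\ell)}\rangle}$, together with one vertex for each block $B\in\B\setminus\B\langle\I^{(\ell)}\rangle$ obtained by extending the base set along the block-cut-tree path from $\B\langle\I^{(\ell)}\rangle$ to $B$, plus further vertices capturing the ``internal'' freedom within $\B\langle\I^{(\ell)}\rangle$. Since the modification only alters coordinates on the small set $\B[P]\cup\{B_{\ell+1}\}$, affine independence of the new family reduces to a local linear-algebra check with a block-triangular structure on the incidence matrix. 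I expect the bookkeeping of which vertices get modified -- in particular guaranteeing that the tracked family always contains enough vertices of the right membership type with respect to $A_\ell$ and $B_{\ell+1}$ to enable the attachment step -- to be the most delicate part of the argument.
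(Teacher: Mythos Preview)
Your inductive strategy via Construction~\ref{construction} is precisely the paper's approach, and the key identity $\sum_B\alpha^{(\ell+1)}_B\chi^\A_B=\sum_B\alpha^{(\ell)}_B\chi^\A_B+\chi^\A_{B_{\ell+1}}-\chi^\A_{A_\ell}$ is correct. But the proof is incomplete at exactly the point you flag, and one of your two modification rules can actually fail. In the ``dual'' case ($B_{\ell+1}\in\A_i$, $A_\ell\notin\A_i$) removing $\B[P]\cup\{B_{\ell+1}\}$ from $\A_i$ need not leave a connected set: $\A_i$ may contain blocks branching off $P$ that are not in $\B\langle\I^{(\ell)}\rangle$, and these get disconnected. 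Moreover, in the ``append'' case two distinct old witnesses that differ only inside $\B[P]$ collapse to the same set after the modification, so affine independence is not automatic without further restricting the tracked family.

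The paper eliminates both difficulties with a reduction you almost wrote down but did not formalize: it first shows that if $(\I,\alpha')$ (the restriction of $\alpha$) is facet-defining for $\CBP(G[\B\langle\I\rangle])$, then $(\I,\alpha)$ is facet-defining for $\CBP(G)$. The extra $|\B|-|\B\langle\I\rangle|$ witnesses are exactly your ``external'' vertices, obtained by growing $\chi^{\B\langle\I\rangle}$ one block at a time along the block-cut tree. With this reduction in place the induction runs entirely inside the expanding subgraphs $G[\B\langle\I^{(\ell)}\rangle]$, so every old witness $\chi'$ has \emph{zero} coordinates on $\B[P]\cup\{B_{\ell+1}\}$. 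Consequently the problematic case $B_{\ell+1}\in\A_i$ never occurs, the append map $\chi'\mapsto\chi'+\chi^{\B[P]\cup\{B_{\ell+1}\}}$ is injective, and the $m{+}1$ new affinely independent witnesses needed for the $m{+}1$ new coordinates are the explicit chains $\chi^{\{B_{\ell+1},A_1,\dots,A_j\}}$ (and, when $A_\ell\in\B[P]$, also $\chi^{\B\langle\I^{(\ell)}\rangle\cup\{A_m,\dots,A_j\}}$). Affine independence is then the block-triangular check you anticipated, but now it is immediate rather than delicate.
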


\begin{proof}
    The independent blocks inequality $(\I,\alpha)$ are feasible by \Cref{supporting}, and can be obtained by \Cref{construction} by \Cref{lemma}. We show inductively that in each construction step we retain the property of being a facet-defining inequality. In \Cref{theoremBox} we have already shown that $x_B \le 1$ defines a facet for all $B \in \B$.
    
    Generally observe that it suffices to show that $(\I,\alpha')$ -- where $\alpha'\in \Z^{\B\langle\I\rangle}$ with $\alpha'_B=\alpha_B$ for $B \in \B\langle\I\rangle$ -- defines a facet for the connected blocks polytope of the graph $G[\B\langle\I\rangle]$: First of all indeed $(\I,\alpha')$ defines an independent blocks inequality for $G[\B\langle\I\rangle]$ since $\alpha_B=0$ for all $B \notin \B\langle\I\rangle$. Let $\F_{\B\langle\I\rangle}$ be the face defined by $(\I,\alpha')$ of $G[\B\langle\I\rangle]$ and $\F$ be the face defined by $(\I,\alpha)$ of $G$, respectively, and suppose that $\F_{\B\langle\I\rangle}$ is a facet. Let $v$ be a vertex of $\F_{\B\langle\I\rangle}$. Then $w \in \R^\B$, defined by $w_B\coloneqq v_B$ if $B \in \B\langle\I\rangle$ and $w_B\coloneqq 0$ if $B \notin \B\langle \I \rangle$, is a vertex of $\F$ because
    \begin{align*}
        1=\sum_{B \in \B\langle \I\rangle}\alpha_Bv_B=\sum_{B \in \B}\alpha_Bw_B.
    \end{align*}
    Furthermore, let $\B \setminus \B\langle\I\rangle=\{B_1,\ldots,B_k\}$. After possibly reordering, we can assume that $\B^{(i)}\coloneqq \B \cup \{B_1,\ldots,B_i\}$, $i \in [k]$, induce connected subgraphs of $G$. Since $\chi^{\B\langle \I \rangle}$ is a vertex of $\F$ and $\alpha_B=0$ for all $B \in \B \setminus \B \langle \I \rangle$, $\chi^{\B^{(i)}}$ is a vertex of $\F$ for $i \in [k]$. Since these vertices are affinely independet to the before described lifted vertices, we get
    \begin{align*}
        \dim(\F)=\dim(\F_{\B\langle\I\rangle})+k,
    \end{align*}
    which shows that $\F$ is a facet if $\F_{\B\langle\I\rangle}$ is a facet.
    
    Let $(\I^{(\ell)},\alpha^{(\ell)})$ be the independent blocks inequality defined in the $\ell$-th construction step of \Cref{construction} and let $\F^{(\ell)}$ be the corresponding face of $G[\B\langle\I^{(\ell)}\rangle]$. By induction, $\F^{(\ell)}$ is in fact a facet. Let $B_{\ell+1}$ be the new block of $\I^{(\ell+1)}$. There exists a block $B'$ in $\B\langle\I^{(\ell+1)}\rangle$ where the coefficient was decreased by one. Let $P=A_1 \cup \ldots \cup A_m \subseteq G$ be the block path connecting $B_{\ell+1}$ and $\B\langle \I^{(\ell)} \rangle$ not containing $B_{\ell+1}$ or any block of $\B\langle \I^{(\ell)} \rangle$, such that $A_1$ and $B_{\ell+1}$ share a common vertex. We distinguish two cases. Suppose first that $B'=A_{m'}$ for some $m' \in [m]$. Then all vertices of $\CBP(G[\B\langle \I^{(\ell)} \rangle])$ lying in $\F^{(\ell)}$ also belong to $\F^{(\ell+1)}$ by adding zeros at the new coordinates. The vectors
    \begin{align*}
        \chi^{\B\langle \I^{(\ell)} \rangle \cup \{A_j,\ldots,A_m\}}, \, j=m'+1,\ldots,m
    \end{align*}
    and
    \begin{align*}
        \chi^{\{B_{\ell+1}\} \cup \{A_1,\ldots,A_j\}}, \, j=0,\ldots, m'-1
    \end{align*}
    and also the vector $\chi^{\B\langle\I^{(\ell+1)}\rangle}$ are vertices of $\F^{(\ell+1)}$. Hence, the face $\F^{(\ell+1)}$ is a facet of $\CBP(G[\B\langle\I^{(\ell+1)}\rangle])$, since the constructed vertices are affinely independent. For the second case we suppose that the block $B'$ where the coefficient was decreased by one belongs to $\B\langle \I^{(\ell)} \rangle$. Then all vertices $\chi$ of $\F^{(\ell)}$ with $\chi_{B'}=0$ yield vertices of $\F^{(\ell+1)}$ by adding zeros at the new coordinates. For every vertex $\chi'$ of $\F^{(\ell)}$ with $\chi'_{B'}=1$ the vector $\chi'+\chi^{\B[P] \cup \{B_{\ell+1}\}}$ is a vertex of $\F^{(\ell+1)}$. Furthermore, the vectors
    \begin{align*}
        \chi^{\{B_{\ell+1}\} \cup \{A_1,\ldots,A_j\}}, \, j=0,\ldots,m,
    \end{align*}
    belong to $\mathcal{F}$. This shows that $\F^{(\ell+1)}$ is a facet, since again the constructed vertices are affinely independent.
\end{proof}

For a connected graph $G$ and its set of blocks $\B$ we let
\begin{align*}
    \P(G)=\{ x \in [0,1]^{\B} \mid x \text{ satisfies all independent blocks inequalities} \}.
\end{align*}
It is clear that $\CBP(G) \subseteq \P(G)$ and we will show in \Cref{completefacets} that they indeed coincide. For this the following will be useful.

\begin{lemma}
\label{combining}
Let $G$ be a connected graph and $\B$ its set of blocks. Let $(\I^{(1)},\alpha^{(1)}), \ldots, (\I^{(m)},\alpha^{(m)})$ induce independent blocks inequalities for $G$, such that $\B\langle \I^{(i)}\rangle \cap \B\langle \I^{(j)}\rangle= \emptyset$ for all $i,j \in [m]$, $i \neq j$, and $\I \coloneqq \I^{(1)} \cup \ldots \cup \I^{(m)}$ is an independent set of blocks. Moreover, let $\A \subseteq \B$ be a set of blocks, such that $G[\B \setminus \A]$ has exactly $m$ connected components, and each of the connected components contains the blocks $\B\langle \I^{(i)} \rangle$ for exactly one $i \in [m]$. Then there exist blocks $\{A_1,\ldots,A_t\} \subseteq \A$, $t \le m-1$, and weights $\alpha_{A_i} \in \Z_{<0}$, such that $G[\B \setminus \{A_1,\ldots,A_t\}]$ has exactly $m$ connected components $K_1,\ldots,K_m$, with $\B\langle \I^{(i)} \rangle \subseteq K_i$ for all $i \in [m]$, and $(\I,\beta)$, with
\begin{align*}
    \beta_B \coloneqq \begin{cases} \alpha^{(i)}_B, &\text{if } B \in \B\langle \I^{(i)}\rangle, \\
                                    \alpha_{A_i}, &\text{if } B=A_i, \\
                                    0, &\text{otherwise},
    \end{cases}
\end{align*}
defines an independent blocks inequality. Moreover, we have $\sum_{i=1}^t \alpha_{A_i}=-(m-1)$.
\end{lemma}

\begin{proof}
    We show this by induction on $m$, where the statement is clear for $m=1$. Hence, let $m > 1$, and suppose that the above lemma is true for all $m' < m$. By the assumption, there exists an $A \in \B\langle \I \rangle \cap \A $, and we have $ A \notin \B\langle \I^{(i)} \rangle$ for all $i \in [m]$ by the definition of~$\A$. Consider the connected components $K_1,\ldots,K_\ell$ of the graph $G[\B \setminus A]$. By the choice of $A$, we have $\ell \ge 2$, and for all $i \in [m]$, we have $\B\langle \I^{(i)} \rangle \subseteq \B[K_s]$ for some $s \in [\ell]$. In particular, $(\I^{(i)},\alpha^{(i)})$ defines an independent blocks inequality for $\CBP(K_s)$. Moreover, all pairs $(\I^{(i)},\alpha^{(i)})$ belonging to the same connected component $K_s$ together with the set $\A_s \coloneqq \A \cap \B[K_s]$ trivially fulfill the assumptions of this lemma and by the choice of $A$, for each connected component there are at most $m-1$ pairs $(\I^{(i)},\alpha^{(i)})$ belonging to the same connected component. Hence, we can apply the induction hypothesis to $K_s$, and get independent blocks inequalities $(\I'^{(s)},\beta^{(s)})$ for $K_s$ for all $s \in [\ell]$ with the required properties. Then it is easy to check that $(\I,\beta)$, where
    \begin{align*}
    	\beta_B \coloneqq \begin{cases} \beta^{(s)}_B, &\text{if } B \in \B[ K_s], \\
    		-(\ell-1), &\text{if } B=A, \\
    		0, &\text{otherwise},
    	\end{cases}
    \end{align*}
defines an independent blocks inequality with the required properties.
\end{proof}

We can now state the main result of this section.

\begin{theorem}
    \label{completefacets}
    Let $G$ be a connected graph. Then $\CBP(G) = \P(G)$. In particular the independent blocks inequalities, together with nonnegativity constraints, yield the complete and irredundant facet description of $\CBP(G)$.
\end{theorem}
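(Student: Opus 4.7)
The plan is to prove the nontrivial inclusion $\P(G) \subseteq \CBP(G)$; combined with $\CBP(G) \subseteq \P(G)$ (from \Cref{supporting} and \Cref{theoremBox}), this yields equality, and irredundancy follows immediately since each listed inequality is shown to be facet-defining by \Cref{theoremBox} and \Cref{indblockfacet}.

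My strategy is to show that every vertex of the rational polytope $\P(G)$ is a $\{0,1\}$-vector. Once this is established, any such vertex $\chi^{\A}$ must have $G[\A]$ connected: if not, pick blocks $B_1,B_2 \in \A$ that lie in different connected components of $G[\A]$. Since $B_1,B_2$ are then vertex-disjoint, $\I := \{B_1,B_2\}$ is an independent set of blocks and $\B\langle\I\rangle$ contains the block path from $B_1$ to $B_2$, at least one block of which is missing from $\A$. Applying \Cref{construction} to $\I$ yields an IBI $(\I,\alpha)$ with $\sum_B \alpha_B \chi^{\A}_B = 2 + \sum_{B \in \B\langle\I\rangle\setminus\I,\, B\in\A} \alpha_B = 2 > 1$, contradicting $\chi^{\A} \in \P(G)$. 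A straightforward extension handles larger witnesses $\I$.

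For integrality I induct on $|\B|$. The base case $|\B|=1$ gives $\P(G)=[0,1]$. For the step, pick a leaf $B$ of the block-cut tree $\T(G)$ and let $v$ be its unique cut vertex. Let $G' := G[\B \setminus \{B\}]$, so that its block set is $\B' = \B \setminus \{B\}$. Every IBI $(\I,\alpha)$ of $G'$ extends (by setting $\alpha_B := 0$) to an IBI of $G$, so the coordinate restriction $x' \in \R^{\B'}$ of any $x \in \P(G)$ lies in $\P(G')$. By induction, $x' \in \CBP(G')$, and I can write $x' = \sum_i \lambda_i \chi^{\A_i}$ as a convex combination of incidence vectors of connected block subsets $\A_i \subseteq \B'$.

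The remaining task is to lift this to a convex decomposition of $x$. Partition the $\A_i$ into \emph{attachable} ones ($\A_i = \emptyset$ or $\A_i$ contains a block incident to $v$, so $\A_i \cup \{B\}$ is connected in $G$) and \emph{unattachable} ones (nonempty, no block at $v$, so $\A_i \cup \{B\}$ is disconnected). Setting $\Lambda^+ := \sum_{\A_i \text{ attachable}} \lambda_i$, I plan to reroute total mass $x_B$ from $\chi^{\A_i}$ to $\chi^{\A_i \cup \{B\}}$ among attachable $\A_i$; this is feasible provided $x_B \le \Lambda^+$, which is the crux of the argument. I expect the main obstacle to be establishing this bound for a suitably chosen (unattachable-minimizing) decomposition. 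The idea is that any certificate of unattachable mass produces an independent set $\I' \subseteq \B'$ lying on the far side of $v$; combining $\I'$ with $\{B\}$ via \Cref{combining} yields an IBI $(\{B\}\cup\I',\beta)$ of $G$ whose validity $\sum_B \beta_B x_B \le 1$, evaluated against the decomposition of $x'$, forces $x_B \le \Lambda^+$. Matching the extremal decomposition against exactly the right IBI is where the care is required.
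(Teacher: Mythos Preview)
Your leaf-peeling induction is genuinely different from the paper's argument, and the plan has a real gap at the lifting step. The inequality $x_B \le \Lambda^+$ is the heart of the matter, and you have not supplied an argument for it---only the hope that some IBI combining $\{B\}$ with a suitable $\I'$ will witness it. Concretely, what you need is that the minimum unattachable mass $\Lambda^-_{\min}(x')$ over all convex decompositions of $x'$ is bounded above by $\max \sum_{B'\in\B'}\beta_{B'}x'_{B'}$, the maximum taken over IBIs $(\I,\beta)$ of $G$ with $B\in\I$, restricted to $\B'$. This is an LP-duality statement: the dual of your ``minimize unattachable mass'' program has optimal solutions that are linear functionals on $\R^{\B'}$ separating attachable from unattachable vertices of $\CBP(G')$, and you must show that the optimum is always attained at a functional of IBI shape. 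That is nontrivial and is not addressed; the vague reference to ``certificates'' and \Cref{combining} does not do it (and \Cref{combining} as stated does not even apply when the connecting path $P$ is empty, i.e., when $\B\langle\I'\rangle$ already contains a block at $v$---there you would need \Cref{construction} directly). Your bare induction hypothesis $\P(G')=\CBP(G')$ hands you \emph{some} convex decomposition of $x'$, but gives you no control over which one; without the full duality argument or a strengthened hypothesis, the lifting does not close.

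The paper sidesteps exactly this difficulty by strengthening the induction: it proves that for every $x\in\P(G)$ there exist a decomposition $x=\sum_i\lambda_i v_i$ into vertices of $\CBP(G)$ \emph{and} an IBI $(\I,\alpha)$ with $\sum_i\lambda_i \le \sum_{B}\alpha_B x_B$. The inductive step subtracts $\mu=\min_B x_B$ so that the residual $y=x-\mu\mathbf{1}$ has a zero coordinate, applies the strengthened hypothesis to each connected component of $\supp(y)$, and then glues the per-component IBIs into a single IBI of $G$ via \Cref{combining}; the tracked IBI bound is precisely what makes the total mass come out $\le 1$. If you want leaf-peeling to work, you will almost certainly need to carry an analogous IBI bound through your induction rather than rely on an unproved duality claim at the end.
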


\begin{proof}
    It is clear that $\CBP(G) \subseteq \P(G)$. Let $x \in \P(G)$. We show that there exists a $k \in \mathbb{N}$ and $\lambda_1,\ldots,\lambda_k \in \R_{>0}$, with $\sum_{i=1}^k \lambda_i \le 1$, and vertices $v_1,\ldots,v_k$ of $\CBP(G)$, such that
    \begin{align*}
        x=\sum_{i=1}^k \lambda_iv_i.
    \end{align*}
    This yields that $x$ can be written as a convex combination of the vertices of $\CBP(G)$ since the zero vector can be added to the sum with coefficient $1-\sum_{i=1}^k \lambda_i$. We proceed by induction on the number of blocks in $G$ and show the stronger statement that for every $x \in \P(G)$ there exist $k \in \mathbb{N}$, $\lambda_1,\ldots,\lambda_k \in \R_{>0}$, vertices $v_1,\ldots,v_k$ of $\CBP(G)$, and an independent blocks inequality $(\I,\alpha)$, such that
    \begin{align*}
        x=\sum_{i=1}^k \lambda_iv_i
    \end{align*}
    and
    \begin{align*}
        \sum_{i=1}^k \lambda_i \le \sum_{B \in \B}\alpha_Bx_B.
    \end{align*}
    Since $x \in \P(G)$, this implies $\sum_{i=1}^k \lambda_i \le 1$. Let $\B$ be the set of blocks of $G$. If $\vert \B \vert=1$, the statement is obviously true. Suppose now that $\vert \B \vert=n$ and by induction the statement is true for all connected graphs with less than $n$ blocks. Let $x=(x_{B_1},\ldots,x_{B_n}) \in \P(G)$ and $\mu=\min_{i \in [n]}x_{B_i}$. Then we can write $x$ as
    \begin{align*}
        x=\mu \cdot \mathbf{1}+y,
    \end{align*}
    with $y\coloneqq (x_{B_1}-\mu,\ldots,x_{B_n}-\mu)$. Observe that all entries of $y$ are between $0$ and $1$ and there is at least one entry in $y$ that is $0$. Let $G_y$ be the graph that is given by the union of the blocks of $\supp(y)\coloneqq \{B \in \B \mid y_B > 0\}$ and let $K_1,\ldots,K_m$ be the connected components of $G_y$. For $i\in [m]$ we denote by $y^{(i)}$ the projection of $y$ onto the coordinates $\{B \in \B \mid B \subseteq K_i\}$. We show that $y^{(i)} \in \P(K_i)$ for all $i \in [m]$. Every independent blocks inequality $(\I,\alpha)$ of any $\CBP(K_i)$ induces an independent blocks inequality $(\I,\alpha')$ of $G$, where $\alpha'$ is defined by
    \begin{align*}
        \alpha'_B=\begin{cases}\alpha_B, & \text{if } B \in \B\langle\I\rangle, \\
                                0,& \text{otherwise.}\end{cases}
    \end{align*}
    With this it follows
    \begin{align*}
        \sum_{B \in \B[K_i]}\alpha_By_B^{(i)}&=\sum_{B \in \B}\alpha'_Bx_B-\mu\underbrace{\sum_{B \in \B}\alpha'_B}_{=1} \le 1-\mu\le 1,
    \end{align*}
    which shows that $y^{(i)} \in \P(K_i)$. Hence, by induction, there exist $\lambda_1^{(i)},\ldots,\lambda_{k_i}^{(i)}>0$ and vertices $v_1^{(i)},\ldots,v_{k_i}^{(i)}\in \CBP(K_i)$ and an independent blocks inequality $(\I^{(i)},\alpha^{(i)})$ of $\CBP(K_i)$ such that
    \begin{align*}
        y^{(i)}=\sum_{j=1}^{k_i} \lambda_j^{(i)}v_j^{(i)},
    \end{align*}
    and
    \begin{align}
    \label{lambda}
        \sum_{j=1}^{k_i}\lambda_j^{(i)} \le \sum_{B \in \B[K_i]} \alpha^{(i)}_By^{(i)}_B,
    \end{align}
    Let $w_j^{(i)}\coloneqq v_j^{(i)} \times \{0\}_{B \in \B\setminus\B [K_i]}$. We note that the $w_j^{(i)}$ are vertices of $\CBP(G)$. We can hence write $x$ as
    \begin{align*}
        x=\mu \cdot \mathbf{1} + \sum_{i=1}^m\sum_{j=1}^{k_i} \lambda_j^{(i)}w_j^{(i)}.
    \end{align*}
    For the sum of coordinates we have
    \begin{align*}
        \mu+\sum_{i=1}^m\sum_{j=1}^{k_i} \lambda_j^{(i)} &\overset{\eqref{lambda}}{\le} \mu+\sum_{i=1}^m\sum_{B \in \B[K_i]} \alpha^{(i)}_By_B^{(i)} \\
        &= \mu+\sum_{i=1}^m \left(\sum_{B \in \B[K_i]} \alpha^{(i)}_B(x_B-\mu) \right).
    \end{align*}
    Let $\B(\mu) \coloneqq \{ B \in \B \mid x_B=\mu\}$. Hence, $G[\B \setminus \B(\mu)]=G_y$. By \Cref{combining}, there exists a set of blocks $\A\coloneqq \{A_1,\ldots,A_t\} \subseteq \B(\mu)$, $t \le m-1$, and weights $\alpha_{A_i}$ such that  $(\I,\beta)$, with $\I \coloneqq \bigcup_{i=1}^m\I^{(i)}$, and
    \begin{align*}
        \beta_B\coloneqq \begin{cases} \alpha^{(i)}_B, &\text{if } B \in \B[K_i], \\
                                       \alpha_{A_s}, &\text{if } B=A_s, s \in [t], \\
                                       0, &\text{otherwise}.
        \end{cases}
    \end{align*}
	defines an independent blocks inequality, such that $\sum_{i=1}^t \beta_{A_i}=-(m-1)$. With this we have
    \begin{align*}
        \mu+\sum_{i=1}^m \left(\sum_{B \in \B[K_i]} \alpha^{(i)}_B(x_B-\mu)\right)&=-(m-1)\mu+\sum_{i=1}^m\sum_{B \in \B[K_i]} \alpha^{(i)}_Bx_B \\
        &=\sum_{i=1}^t \beta_{A_i}x_{A_i}+\sum_{i=1}^m\sum_{B \in \B[K_i]} \beta_Bx_B  \\
        &=\sum_{B \in \B}\beta_Bx_B,
    \end{align*}
	which finishes the proof.
\end{proof}

\section{Edges}
\label{sectionEdges}

In this section we fully characterize the edges of the connected blocks polytope and show as an application that connected blocks polytopes are Hirsch. The following theorem considers the polyhedral edges incident to the origin, i.e., the empty subset of blocks.

\begin{theorem}
\label{edgesempty}
    Let $G$ be a connected graph and $\B$ its set of blocks. Consider $\emptyset \neq \A \subseteq \B$ such that $G[\A]$ is connected. Then $\{\textbf{0},\chi^\A\}$ is an edge of $\CBP(G)$ if and only if $\vert \A \vert =1$.
\end{theorem}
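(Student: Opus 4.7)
The plan is to handle the two directions of the equivalence separately.

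For the ``if'' direction, assume $\A = \{B\}$ is a singleton. The cleanest route is to exhibit the alleged edge as a $1$-dimensional face cut out by box facets. Consider
\[
F := \{x \in \CBP(G) \mid x_{B'} = 0 \text{ for all } B' \in \B \setminus \{B\}\},
\]
which is the intersection of $|\B|-1$ of the facets provided by \Cref{theoremBox}. A vertex $\chi^{\A'}$ of $\CBP(G)$ lies in $F$ if and only if $\A' \subseteq \{B\}$, so $F$ has exactly the two vertices $\chi^\emptyset$ and $\chi^{\{B\}}$; hence $F = \conv\{\chi^\emptyset, \chi^{\{B\}}\}$ is the claimed edge.

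For the ``only if'' direction, assume $|\A| \ge 2$. If $\{\emptyset,\A\}$ were an edge, then any pair of vertices $v_1,v_2$ of $\CBP(G)$ with $v_1+v_2=\chi^\emptyset+\chi^\A$ would have to satisfy $\{v_1,v_2\}=\{\chi^\emptyset,\chi^\A\}$, because the midpoint of $[v_1,v_2]$ coincides with the midpoint of the edge and hence forces both $v_1$ and $v_2$ into the edge. I will contradict this by producing a different such pair. To do so, I use the block-cut tree $\T(G[\A])$ of the induced subgraph $G[\A]$. Since every $B \in \A$ is $2$-connected in $G$, it remains maximal $2$-connected in $G[\A]$, so the blocks of $G[\A]$ are precisely $\A$; hence $\T(G[\A])$ has at least $|\A| \ge 2$ block-nodes, and therefore at least one cut-vertex node. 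As every cut-vertex node of a block-cut tree has degree $\ge 2$, some leaf of $\T(G[\A])$ is a block $B_1 \in \A$. Setting $\A_1 := \{B_1\}$ and $\A_2 := \A \setminus \{B_1\}$, both sets are nonempty proper subsets of $\A$; $G[\A_1]$ is trivially connected, and $G[\A_2]$ is connected since removing a leaf from the tree $\T(G[\A])$ leaves a subtree. Now $\chi^{\A_1}+\chi^{\A_2} = \chi^\A = \chi^\emptyset + \chi^\A$, while $\chi^{\A_1},\chi^{\A_2} \notin \{\chi^\emptyset,\chi^\A\}$, yielding the desired contradiction.

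The only genuine subtlety lies in the correct identification of the block-cut tree of $G[\A]$, which is needed to guarantee that deleting a leaf block really does leave a connected remainder; everything else amounts to bookkeeping with incidence vectors.
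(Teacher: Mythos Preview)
Your proof is correct and takes a genuinely different route from the paper's. The paper's argument is essentially a one-liner relying on the complete facet description of \Cref{completefacets}: since every independent blocks inequality has right-hand side $1$, the only facets through the origin are the coordinate hyperplanes $x_B=0$, so the tangent cone of $\CBP(G)$ at $\mathbf{0}$ coincides with that of the unit cube $Q_{|\B|}$, and hence the edges at the origin are exactly those to the unit vectors $\chi^{\{B\}}$. Your approach instead handles the two directions separately and more elementarily: the ``if'' direction uses only the box facets of \Cref{theoremBox} rather than the full description, and the ``only if'' direction uses the midpoint trick (the same device the paper later employs in the proof of \Cref{edges}), splitting $\A$ into a leaf block and its complement. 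The upshot is that your argument is independent of the heavier \Cref{completefacets}, while the paper's is shorter once that theorem is in hand.

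One small wording issue: in justifying that the blocks of $G[\A]$ are exactly $\A$ you write ``every $B\in\A$ is $2$-connected in $G$'', but a block of $G$ may also be a bridge. The conclusion is unaffected---a bridge of $G$ remains a bridge of the subgraph $G[\A]$, and maximality of a $2$-connected $B$ is inherited since any larger $2$-connected subgraph of $G[\A]$ would also be one of $G$---so this is only a matter of phrasing, not a gap.
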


\begin{proof}
    
    Due to \Cref{completefacets}, the only homogeneous facets are the faces given by $\{x \in \R^\B \mid x_B=0 \text{ for a } B \in \B\}$. Hence, the vertex figure of the origin is the same as the vertex figure of the origin in the unit cube $Q_{\vert \B \vert} \subseteq \R^\B$.
\end{proof}

The next theorem characterizes the edges of $\CBP(G)$ in the remaining cases.

\begin{theorem}
\label{edges}
    Let $G$ be a connected graph and $\B$ its set of blocks. Let $\emptyset \neq \A_1,\A_2 \subseteq \B$, $\A_1 \neq \A_2$, $G[\A_1]$ and $G[\A_2]$ connected, and $\vert \A_1 \vert \le \vert \A_2 \vert$. Then $\{\chi^{\A_1},\chi^{\A_2}\}$ is an edge of $\CBP(G)$ if and only if one of the following conditions holds:
\begin{enumerate}[(i)]
    \item $G[\A_1 \cup \A_2]$ is disconnected, or
    \item $\A_1 \subseteq \A_2$ and there is a unique block in $\A_2 \setminus \A_1$ that shares a common vertex with a block of $\A_1$.
\end{enumerate}
\end{theorem}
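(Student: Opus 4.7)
The plan is to use the midpoint characterization of edges: $\{\chi^{\A_1},\chi^{\A_2}\}$ is an edge iff every representation of $m\coloneqq\tfrac12(\chi^{\A_1}+\chi^{\A_2})$ as a convex combination $m=\sum_i\lambda_i\chi^{\B_i}$ of vertices of $\CBP(G)$ (each $\B_i\subseteq\B$ inducing a connected, possibly empty, subgraph; $\lambda_i>0$) uses only $\chi^{\A_1}$ and $\chi^{\A_2}$. The key structural fact I exploit throughout is that a subset $\A\subseteq\B$ induces a connected subgraph of $G$ precisely when it forms a subtree of the block-cut tree $\T(G)$, so that intersections of two connected block-sets are again connected or empty. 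For the sufficiency direction, observe that $\supp(m)=\A_1\cup\A_2$ forces $\B_i\subseteq\A_1\cup\A_2$. Under condition (i), the sets $\A_1$ and $\A_2$ lie in distinct components of $G[\A_1\cup\A_2]$, so each connected $\B_i$ lies entirely in one of them, and matching the coordinate $\tfrac12$ at every block of $\A_j$ forces every nonempty $\B_i\subseteq\A_j$ to equal $\A_j$. Under condition (ii), the coordinate $1$ on $\A_1$ forces $\A_1\subseteq\B_i$, so I may write $\B_i=\A_1\cup S_i$ with $S_i\subseteq\A_2\setminus\A_1$; since the unique block $B^*\in\A_2\setminus\A_1$ adjacent to $\A_1$ is the only possible attachment point, each nonempty $S_i$ must be a subtree containing $B^*$, and matching the coordinate $\tfrac12$ at every single block of $\A_2\setminus\A_1$ (not just $B^*$) then forces $S_i=\A_2\setminus\A_1$, i.e.\ $\B_i=\A_2$.

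For the necessity direction, I assume neither (i) nor (ii) holds and exhibit a decomposition of $m$ into other vertices. In Subcase~A, where $\A_1\not\subseteq\A_2$ (and, using $\vert\A_1\vert\le\vert\A_2\vert$ together with $\A_1\ne\A_2$, also $\A_2\not\subseteq\A_1$), the intersection $\A_1\cap\A_2$ is empty or connected and the union $\A_1\cup\A_2$ is connected by the failure of (i); both are distinct from $\A_1$ and $\A_2$, giving
\begin{align*}
m=\tfrac{1}{2}\bigl(\chi^{\A_1\cap\A_2}+\chi^{\A_1\cup\A_2}\bigr)
\end{align*}
as a straightforward coordinate-wise check confirms. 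In Subcase~B, where $\A_1\subsetneq\A_2$ but at least two blocks of $\A_2\setminus\A_1$ are adjacent to $\A_1$, removing $\A_1$ from $\T\langle\A_2\rangle$ splits $\A_2\setminus\A_1$ into $k\ge2$ branches $S_1,\ldots,S_k$, each attached to $\A_1$ at a distinct cut vertex; then $\A'_1\coloneqq\A_1\cup S_1$ and $\A'_2\coloneqq\A_2\setminus S_1=\A_1\cup(S_2\cup\cdots\cup S_k)$ are both connected, both distinct from $\A_1$ and $\A_2$, and satisfy $\chi^{\A'_1}+\chi^{\A'_2}=\chi^{\A_1}+\chi^{\A_2}$, so $m=\tfrac{1}{2}(\chi^{\A'_1}+\chi^{\A'_2})$.

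The main obstacle will be the coordinate-matching argument for case (ii) of the sufficiency direction, where one must rule out every intermediate subtree $S_i$ strictly between $\{B^*\}$ and $\A_2\setminus\A_1$: this requires simultaneously testing the coordinate equation at every block of $\A_2\setminus\A_1$, not merely at $B^*$. The remaining ingredients---the branch decomposition around $\A_1$ in Subcase~B and the intersection/union identity in Subcase~A---follow cleanly once one adopts the subtree viewpoint of the block-cut tree.
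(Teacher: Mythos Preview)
Your sufficiency direction takes a genuinely different route from the paper: you argue via uniqueness of the midpoint's convex representation, whereas the paper exhibits explicit supporting hyperplanes that isolate $\chi^{\A_1}$ and $\chi^{\A_2}$. Both are valid; your argument is more self-contained, but in case~(i) you skip a step worth making explicit: after splitting the $\B_i$ by component into index sets $I_1,I_2$, one needs $\sum_{i\in I_j}\lambda_i=\tfrac12$ \emph{exactly} (each side is $\ge\tfrac12$ by testing any single block, and the two sides together with any mass on $\emptyset$ sum to $1$) before the per-block equations force $\B_i=\A_j$ for every $i\in I_j$. Case~(ii) is fine as you wrote it.

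Your necessity direction matches the paper's, but Subcase~B as written has a gap. Removing only the block-vertices $\A_1$ from $\T\langle\A_2\rangle$ need not produce $k\ge2$ components: if two blocks $B_1,B_2\in\A_2\setminus\A_1$ attach to $\A_1$ through the \emph{same} cut vertex $v$ (take $\A_1=\{B\}$ and $\A_2=\{B,B_1,B_2\}$ with $B,B_1,B_2$ all containing $v$), then after deleting $B$ the vertices $B_1,v,B_2$ remain connected in $\T\langle\A_2\rangle$, so $k=1$ and your construction degenerates to $\A'_1=\A_2$, $\A'_2=\A_1$. The remedy is to partition $\A_2\setminus\A_1$ not by components of $\T\langle\A_2\rangle-\A_1$ but by which of the $r\ge2$ blocks adjacent to $\A_1$ lies on the $\T$-path back to $\A_1$; this yields groups $T_1,\ldots,T_r$ with each $\A_1\cup T_j$ connected, and then $\A'_1\coloneqq\A_1\cup T_1$, $\A'_2\coloneqq\A_1\cup\bigcup_{j\ge2}T_j$ gives the desired alternative midpoint decomposition. (Your remark that the branches attach at ``distinct'' cut vertices is likewise false in this example, though you never actually use it.) The paper builds essentially the same $\A'_1,\A'_2$ by an equivalent iterative procedure.
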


\begin{proof}
    First suppose that $G[\A_1 \cup \A_2]$ is disconnected. Let $\vert \A_1 \vert=k$ and $\vert \A_2 \vert =\ell$. Then
    \begin{align*}
        \H \coloneqq \Big\{ x \in \R^\B \Big\vert \textstyle{\frac{1}{k}}\sum_{B \in \A_1}x_B+\textstyle{\frac{1}{\ell}} \sum_{B \in\A_2}x_B-2\sum_{B \in \B \setminus (\A_1 \cup \A_2)}x_B=1 \Big\} 
    \end{align*}
    is a supporting hyperplane for $\CBP(G)$ and the only vertices contained in this hyperplane are $\chi^{\A_1}$ and $\chi^{\A_2}$. Thus $\{\chi^{\A_1},\chi^{\A_2}\}$ is an edge of $\CBP(G)$.

    Suppose now that $\A_1 \subseteq \A_2$ and that there is a unique block $B' \in \A_2 \backslash \A_1$ that shares a vertex with a block of $\A_1$. Let $m=\vert \A_2 \backslash \A_1 \vert$. We consider the face $\mathcal{F}$ of $\CBP(G)$ defined by
    \begin{align*}
        \F \coloneqq & \{x \in \CBP(G) \mid x_A=1 \text{ for } A \in \A_1, x_A=0 \text{ for } A \notin \A_2\} \\
        =&\conv\{\chi^\A \mid \A_1 \subseteq \A \subseteq \A_2, G[\A] \text{ is connected} \}.
    \end{align*}
    Then the hyperplane
        \begin{align*}
            \H \coloneqq \{x \in \R^\B \mid (m-1) x_{B'}-\textstyle{\sum_{B\in \A_2 \backslash \left(\A_1 \cup \{B'\}\right)}x_B}=0 \}
       \end{align*}
    is a supporting hyperplane for $\F$ and $\chi^{\A_1}$ and $\chi^{\A_2}$ are the only vertices of $\F$ in $\H$ .This shows that $\{\chi^{\A_1},\chi^{\A_2}\}$ is an edge of $\F$ and hence of $\CBP(G)$.

    It remains to show that in all other cases $\{\chi^{\A_1},\chi^{\A_2}\}$ is not an edge of $\CBP(G)$. We first consider the case that $G[\A_1 \cup \A_2]$ is connected but $\A_1 \nsubseteq \A_2$ and $\A_2 \nsubseteq \A_1$. Then $\A_1 \cap \A_2$ and $\A_1 \cup \A_2$ induce connected subgraphs of $G$ different from $\A_1$ and $\A_2$. For their incidence vectors we obtain the relation
    \begin{align*}
        \frac{1}{2}\Big(\chi^{\A_1}+\chi^{\A_2}\Big)=\frac{1}{2}\Big(\chi^{\A_1\cap \A_2}+\chi^{\A_1 \cup \A_2}\Big),
    \end{align*}
    which shows that there is a second pair of vertices of $\CBP(G)$ that has the same midpoint as $\chi^{\A_1}$ and $\chi^{\A_2}$. Hence, $\{\chi^{\A_1},\chi^{\A_2}\}$ is not an edge of $\CBP(G)$.
    
    The second case to consider is that $\A_1 \subseteq \A_2$ and there are at least two blocks $B_1$ and $B_2$ in $\A_2 \backslash \A_1$ that each shares a vertex with some block of $\A_1$ (possibly distinct). Thus, the two graphs $G[\A_1 \cup \{B_1\}]$ and $G[\A_1 \cup \{B_2\}]$ are connected subgraphs of $G$. Now we join the blocks of $\A_2 \backslash (\A_1 \cup \{B_1,B_2\})$ with $\A_1 \cup \{B_1\}$ or $\A_1 \cup \{B_2\}$ iteratively in the following way. We set $\A_1^{(1)}\coloneqq \A_1  \cup \{B_1\}$ and $\A_2^{(1)}\coloneqq \A_1 \cup \{B_2\}$ and $\B^{(1)}\coloneqq \{B_1,B_2\}$. For increasing $\ell$, starting with $\ell=1$, we pick a block $B' \in \A_2 \backslash (\A_1 \cup \B^{(\ell)})$ that shares a common vertex with a block of $\A_1^{(\ell)}$ or $\A_2^{(\ell)}$ (or both). We set $\A_1^{(\ell+1)}\coloneqq \A_1^{(\ell)} \cup \{B'\}$ and $\A_2^{(\ell+1)}\coloneqq \A_2^{(\ell)}$, if $B'$ shares a common vertex with a block of $\A_1^{(\ell)}$, and else, we set $\A_1^{(\ell+1)}\coloneqq \A_1^{(\ell)}$ and $\A_2^{(\ell+1)}\coloneqq \A_2^{(\ell)}\cup \{B'\}$. In both cases we set $\B^{(\ell+1)}\coloneqq \B^{(\ell)}\cup \{B'\}$. By construction it is clear that such a block $B'$ always exists as long as $\A_2 \backslash (\A_1 \cup \B^{(\ell)}) \neq \emptyset$. In the end, we obtain two sets of blocks $\A'_1$ and $\A'_2$ that induce connected subgraphs of $G$ and whose incidence vectors have the same midpoint as $\chi^{\A_1}$ and $\chi^{\A_2}$ in the polytope $\CBP(G)$. Hence, $\{\chi^{\A_1}, \chi^{\A_2}\}$ is not an edge of $\CBP(G)$.
\end{proof}

We recall that a $d$-dimensional polytope $\P$ is Hirsch, if its diameter is at most $k-d$, where $k$ denotes the number of facets of $\P$. In \cite{Dantzig}, Dantzig stated the Hirsch conjecture, first formulated by Hirsch in 1957, which claims that every polytope is Hirsch. More than fifty years later, Santos found the first counterexample to the Hirsch conjecture, see \cite{Santos_2012}. To show that connected blocks polytopes are Hirsch, we first give an upper bound on their diameter.

\begin{lemma}
    \label{diameter}
    Let $G$ be a connected graph. Then $\CBP(G)$ has diameter at most $\dim(\CBP(G))$, i.e., for each pair of vertices there exists a path along the edges of the polytope between those vertices traversing at most $\dim(\CBP(G))$ many edges.
\end{lemma}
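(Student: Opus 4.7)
The plan is to split on whether $G[\A_1 \cup \A_2]$ is connected, and in the connected case to walk via the intermediate vertex $\chi^{\A_1 \cup \A_2}$, adding and removing one block at a time.

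If $G[\A_1 \cup \A_2]$ is disconnected, then $\{\chi^{\A_1}, \chi^{\A_2}\}$ is already an edge by \Cref{edges}(i), so the diameter bound is trivial. Hence assume $G[\A_1 \cup \A_2]$ is connected.

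In the main case where both $\A_1$ and $\A_2$ are nonempty, the plan is to walk $\chi^{\A_1} \to \chi^{\A_1 \cup \A_2} \to \chi^{\A_2}$ in two legs. For the first leg I would repeatedly add a single block of $\A_2 \setminus \A_1$ that is adjacent (in $G$) to the current set. Such a block always exists: any remaining block of $\A_2 \setminus (\A_1 \cup S)$ has a path of blocks in $\A_1 \cup \A_2$ to $\A_1 \cup S$, and the last block on that path before entering $\A_1 \cup S$ lies in $\A_2 \setminus (\A_1 \cup S)$ and is adjacent to $\A_1 \cup S$. Each single-block addition is a valid edge by \Cref{edges}(ii), because the difference consists of a single block so the "unique block sharing a vertex" condition is trivial. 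This leg has length $|\A_2 \setminus \A_1|$. Symmetrically, for the second leg I would remove the blocks of $\A_1 \setminus \A_2$ one at a time, always choosing one that is a leaf of the block-cut subtree of the current set; such a leaf in $\A_1 \setminus \A_2$ always exists, since otherwise every remaining block of $\A_1 \setminus \A_2$ would be internal to $\T[\text{current}]$, forcing it to lie on a path in $\T$ between two blocks of $\A_2$ and hence to belong to $\A_2$, a contradiction. The total length is then $|\A_1 \triangle \A_2| \le |\B| = \dim(\CBP(G))$.

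The degenerate case $\A_1 = \emptyset$ (symmetric for $\A_2 = \emptyset$) is handled separately, because one cannot apply \Cref{edges}(ii) starting from the origin. Here the first step is $\chi^\emptyset \to \chi^{\{B\}}$ for some $B \in \A_2$, which is an edge by \Cref{edgesempty}, and one then proceeds as in the first leg above to reach $\chi^{\A_2}$, for a total of $|\A_2| \le |\B|$ steps.

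The only nontrivial points to verify are the two existence claims used above (an adjacent block at each stage of adding, a leaf block in $\A_1\setminus \A_2$ at each stage of removing); both reduce to short block-cut tree arguments using the connectedness of $\A_1 \cup \A_2$ and of $\A_2$, respectively.
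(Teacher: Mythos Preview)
Your proof is correct, but it takes a different route from the paper's. The paper walks via the \emph{intersection} $\A_1\cap\A_2$ rather than the union: since $G[\A_1\cap\A_2]$ is always connected (an easy block-cut tree observation, valid even when the intersection is empty), one gets $\delta(\chi^{\A_1},\chi^{\A_2})\le|\A_1\setminus\A_2|+|\A_2\setminus\A_1|$ uniformly, with no case split. This avoids both your disconnected-union case and the leaf-existence argument in your removal leg, because in the subset situation $\A_1\cap\A_2\subseteq\A_i$ only \emph{additions} are needed and an adjacent block is immediately available by connectedness of $\A_i$. Your approach still works and yields the same bound $|\A_1\triangle\A_2|$, but the intersection route is slightly slicker: it folds the degenerate $\A_i=\emptyset$ case and the disconnected-union case into the main argument for free.
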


\begin{proof}
    Let $\B$ be the set of blocks and let $\delta(\chi_1,\chi_2)$ denote the distance of the vertices $\chi_1$ and $\chi_2$ in the graph of the polytope $\CBP(G)$. At first we remark that if $\A_1,\A_2 \subseteq \B$, such that $G[\A_1]$ and $G[\A_2]$ are connected, with $\A_1 \subseteq \A_2$, we have $\delta(\chi^{\A_1},\chi^{\A_2}) \le \vert \A_2 \setminus \A_1 \vert$: Let $\A_2=\A_1 \cup \{B_1,\ldots,B_k\}$. After possibly reordering we can assume that $\A^{(i)}\coloneqq \A_1 \cup \{B_1,\ldots,B_i\}$ induces a connected subgraph for all $i \in \{0,\ldots,k\}$. Due to \Cref{edgesempty,edges}, respectively, we see that $\{\chi^{\A^{(i-1)}},\chi^{\A^{(i)}}\}$ is an edge of $\CBP(G)$ for all $i \in [k]$. This implies $\delta(\chi^{\A_1},\chi^{\A_2})\le \vert \A_2 \setminus \A_1\vert$.
    
    Now let $\A_1,\A_2 \subseteq \B$ be arbitrary sets of blocks each inducing a connected subgraph. Clearly, $G[\A_1 \cap \A_2]$ also induces a connected subgraph. Then by the previous case, we have
    \begin{align*}
        \delta(\chi^{\A_1},\chi^{\A_2}) &\le \delta(\chi^{\A_1},\chi^{\A_1\cap\A_2})+\delta(\chi^{\A_2},\chi^{\A_1\cap\A_2}) \\
                        & \le \vert \A_1 \setminus (\A_1\cap\A_2) \vert + \vert \A_2 \setminus (\A_1\cap\A_2) \vert \\
                        & \le \vert \B \vert=\dim(\CBP(G)),
    \end{align*}
    where the last inequality follows because the two sets are disjoint.
\end{proof}

From this it easily follows:

\begin{corollary}
    The connected blocks polytopes are Hirsch.
\end{corollary}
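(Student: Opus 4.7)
The plan is to combine the diameter bound from \Cref{diameter} with a simple count of facets. By \Cref{diameter}, the diameter of $\CBP(G)$ is at most $\dim(\CBP(G)) = |\B|$, where the last equality is \Cref{dimension}. So it suffices to show that the number of facets $k$ of $\CBP(G)$ satisfies $k - \dim(\CBP(G)) \ge |\B|$, i.e., $k \ge 2|\B|$.

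For the facet count, I would invoke \Cref{theoremBox}: for each block $B \in \B$, both inequalities $x_B \ge 0$ and $x_B \le 1$ define facets of $\CBP(G)$. These $2|\B|$ facets are pairwise distinct (since $x_B \ge 0$ and $x_{B'} \le 1$ describe disjoint subsets of $\CBP(G)$, and distinct box inequalities have distinct supporting hyperplanes). Hence $k \ge 2|\B|$, and consequently
\begin{equation*}
    \mathrm{diam}(\CBP(G)) \le \dim(\CBP(G)) = |\B| \le k - \dim(\CBP(G)),
\end{equation*}
which is exactly the Hirsch bound.

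There is no real obstacle here; the lemma and the box-inequality theorem do all the work. One could optionally remark that the bound is often far from tight: the independent blocks inequalities of \Cref{indblockfacet} with $|\I| \ge 2$ contribute additional facets, so typically $k$ is much larger than $2|\B|$.
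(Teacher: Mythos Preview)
Your proof is correct and matches the paper's argument essentially verbatim: both invoke \Cref{diameter} for the diameter bound $\dim(\CBP(G))=|\B|$ and \Cref{theoremBox} to count at least $2|\B|$ box facets, yielding the Hirsch inequality. Your extra remarks about distinctness of the box facets and non-tightness from the independent blocks inequalities are fine but not needed.
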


\begin{proof}
    Let $\B$ the set of blocks of a connected graph $G$. The dimension of $\CBP(G)$ is $\vert \B \vert$ by \Cref{dimension}. In \Cref{theoremBox}, we showed that the box inequalitites are facets of $\CBP(G)$. Hence, $\CBP(G)$ has at least $2 \cdot \vert \B \vert$ many facets. By \Cref{diameter}, the diameter of $\CBP(G)$ is bounded by
    \begin{align*}
        \dim(\CBP(G))=\vert \B \vert = 2 \cdot \vert \B \vert - \dim(\CBP(G)) \le k-\dim(\CBP(G)),
    \end{align*}
    where $k$ denotes the number of facets of $\CBP(G)$.
\end{proof}

\section{Unimodular Triangulations}

\label{sectionUnimodularTriangulations}

In this section, using our above obtained knowledge of the polytope's facets, we study the $h^\ast$-vector of the connected blocks polytope. We start by providing the needed preliminaries from Ehrhart theory. Afterwards, we show that connected blocks polytopes admit a regular unimodular triangulation by giving a squarefree Gr\"obner basis. Continuing, we show that connected blocks polytopes are Gorenstein of index $2$. Both results together imply unimodality of the $h^\ast$-vector. Furthermore, we will determine the $h^\ast$-polynomial of connected blocks polytopes explicitly, in the case that the underlying graph is a block path. We close this section by giving a lower bound on $h^\ast_1$.

\subsection*{Lattice polytopes and Ehrhart theory}
Let $\Pc \subseteq \R^d$ be a lattice polytope. We say that $\P$ is \emph{reflexive} if its dual polytope $\P^\Delta \subseteq \R^d$ is also a lattice polytope. If $\P$ can be written as
\begin{align*}
    \P=\{x \in \R^d \mid A \cdot x \le \mathbf{1} \},
\end{align*}
where $A$ is an integer matrix, it already follows that $\P$ is reflexive. Furthermore, we say that $\P$ is \emph{Gorenstein of index $k$} if the $k$-th dilation $k\P$ of $\P$ is reflexive.

Ehrhart \cite{Ehrhart} proved that $|n\Pc\cap\Z^d|$ -- i.e.,~the number of lattice points in the $n$-th dilation of $\Pc$ -- is given by a polynomial $E(\Pc,n)$ in $n$ of degree $\dim P$. Similarly, the  \emph{$h^\ast$-polynomial} $h^\ast(\Pc,t)=h_0^\ast(\Pc)+h_1^\ast (\Pc)t+\cdots +h_d^\ast(\Pc) t^d$ of a $d$-dimensional lattice polytope $\Pc$ is obtained by applying a particular change of basis to $E(\Pc,n)$; namely, we have
\[
E(\Pc,n)=h_0^\ast(\Pc) {n+d\choose d}+h_1 ^\ast(\Pc) {n+d-1\choose d} +\cdots + h_d^\ast (\Pc){n\choose d} \, .
\]
All coefficients of $h^\ast(\P,t)$ are nonnegative \cite{Stanley}. For $k\in \N$, $\Pc$ is Gorenstein of index $k$ if $h^\ast(\Pc,t)=t^{d+1-k} h^\ast\left(\Pc,\frac{1}{t}\right)$ \cite{Hib}. 
A \emph{triangulation} $\mathcal{S}$ of a $d$-dimensional lattice polytope $\Pc$ is a subdivision into simplices of dimension at most $d$. 
Such a triangulation is \emph{unimodular} if all its simplices are, i.e., they have normalized volume $1$. The \emph{$f$-polynomial} of $\mathcal{S}$ is defined as 
\[
f(\mathcal{S},t)=f_{-1}+f_0t+\cdots +f_dt^{d+1},
\]
where $f_i=|\{F\in\mathcal{S}\mid \dim(F)=i\}|$. The \emph{$h$-polynomial} $h(\mathcal{S},t)$ of $\mathcal{S}$ is defined via the relation 
\[
f(\mathcal{S},t)=\sum_{i=0}^{d+1}h_it^i(t+1)^{d+1-i}.
\]
If $\Pc$ admits  a unimodular triangulation $\mathcal{S}$, then $h^\ast(\Pc,t)=h(\mathcal{S},t)$. 

To calculate a regular, possibly unimodular, triangulation of $\Pc$, one often computes a Gr\"obner basis. Let $\mathbb{K}$ be a field and $\mathbb{K}[y_1^{\pm},\ldots,y_d^{\pm},s]$ be the Laurent polynomial ring. For a lattice polytope $\Pc\subseteq \R^d$ and any $\alpha=(\alpha_1,\ldots,\alpha_d)\in \Pc\cap \Z^d$, we set $y^\alpha=y_1^{\alpha_1}\cdots y_d^{\alpha_d} \in \mathbb{K}[y_1^{\pm},\ldots,y_d^{\pm},s]$. The ring  $\mathbb{K}[\P]\coloneqq \mathbb{K}[y^\alpha s~|~\alpha\in \Pc\cap \Z^d]$ is the \emph{toric ring} of $\Pc$, i.e., $\mathbb{K}[\P]$ is the subring of $\mathbb{K}[y_1^{\pm},\ldots,y_d^{\pm},s]$ generated by the monomials $y^\alpha s$ for $\alpha\in \Pc\cap \Z^d$. Let $S=\mathbb{K}[x_\alpha~|~\alpha\in \Pc\cap \Z^d]$, where $\deg(x_\alpha)=1$. The \emph{toric ideal} $I_{\Pc}$ of $\Pc$ is the kernel of the surjective ring homomorphism
\[
\pi \colon S\to \mathbb{K}[P] \text{, with } \pi(x_\alpha) \coloneqq y^\alpha s.
\]

A \emph{term order} on a polynomial ring, here $R=\mathbb{K}[x_1,\ldots,x_n]$, is a  total order $<$ on the set of monomials, such that for all monomials $a,b,c$ we have $1<a$, and $a<b$ implies $ac<bc$. A prototypical example is given by the \emph{degree reverse lexicographic ordering} $<_{\rev}$ (degrevlex), induced by $x_1<\cdots < x_n$; we have $x_1^{\alpha_1}\cdots x_n^{\alpha_n}<_{\rev}x_1^{\beta_1}\cdots x_n^{\beta_n}$  if and only if $\sum_{i=1}^n \alpha_i<\sum_{i=1}^n \beta_i$, or  $\sum \alpha_i=\sum \beta_i$ and $\alpha_\ell>\beta_\ell$ for $\ell=\min\{j~|~\alpha_j\neq \beta_j\}$. For a polyomial $f\in R$, its \emph{initial term} $\init_<(f)$ is the largest monomial appearing in $f$. The \emph{initial ideal} $\init_<(I)$ of an ideal $I\subseteq R$ is the ideal generated by all initial terms in $I$. We call a set of  generators $g_1,\ldots,g_r$ of $I$ a \emph{Gr\"obner basis} (w.r.t. $<$) if $\init_<(g_1),\ldots,\init_<(g_r)$ generate $\init_<(I)$. 

Any Gr\"obner basis of the toric ideal $I_{\Pc}$ of a lattice polytope $\Pc$ gives rise to a regular triangulation $\Delta_{\Pc}$ of $\Pc$ on the vertex set $\Pc\cap \Z^d$  as follows: A  set $M\subseteq \Pc\cap \Z^d$ is the vertex set of a simplex in $\Delta_{\Pc}$ if and only if $\prod_{\alpha\in M}x_\alpha\notin \sqrt{\init_<(I_{\Pc})}$. The triangulation $\Delta_{\Pc}$ is a regular unimodular triangulation if and only if $\init_<(I_{\Pc})=\sqrt{\init_<(I_{\Pc})}$. This is the case if and only if $I_{\Pc}$ has a squarefree Gr\"obner basis (see e.g., \cite[Corollary 8.9]{Sturmfels}), i.e., there exists a Gr\"obner basis whose initial terms are squarefree.

\bigskip

\paragraph{\textbf{Results for the connected blocks polytopes}} We now focus on Gröbner bases for connected blocks polytopes. For a connected graph $G$ and its set of blocks $\B$, we consider the polynomial ring $\mathbb{K}[x_\A \mid \A \subseteq \B, \, G[\A] \text{ connected}]$. We consider the partial order where $x_{\A_1} < x_{\A_2}$ if and only if $\A_1 \supset \A_2$. We use the degree reverse lexicographic ordering $<_{\rev}$ w.r.t. any linear extension of this partial order.

\begin{theorem}
    \label{grobner}
    Let $G$ be a connected graph and $\B$ its set of blocks. Then
    \begin{align*}
        \G \coloneqq \{\underline{x_{\A_1}x_{\A_2}}-x_{\A_1 \cap \A_2}x_{\A_1 \cup \A_2} \mid & \A_1,\A_2 \subseteq \B,\ \A_1 \nsubseteq \A_2, \ \A_2 \nsubseteq \A_1, \\
        &G[\A_1],  \ G[\A_2], \ G[\A_1 \cup \A_2] \text{ connected} \}
    \end{align*}
    is a Gr\"obner basis of the toric ideal $I_{\CBP(G)}$ of $\CBP(G)$ with respect to the above defined order. In particular, $\CBP(G)$ has a regular unimodular triangulation. The leading monomials are the underlined terms.
\end{theorem}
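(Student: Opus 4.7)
The plan is to prove that $\G$ is a Gr\"obner basis by exhibiting a bijection between the standard monomials of $R/\init_<(\G)$ and the lattice points of the integer cone over $\CBP(G)$. Once this is done, all leading monomials $x_{\A_1}x_{\A_2}$ are squarefree (since $\A_1 \ne \A_2$), so $\init_{<_{\rev}}(I_{\CBP(G)})$ is squarefree and \cite[Corollary~8.9]{Sturmfels} yields the regular unimodular triangulation.

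I would begin by recording the routine facts. Each generator $x_{\A_1}x_{\A_2}-x_{\A_1 \cap \A_2}x_{\A_1\cup \A_2}$ lies in $I_{\CBP(G)}$: the intersection of two connected subtrees of the block-cut tree is again a subtree, so $G[\A_1 \cap \A_2]$ is connected (the empty graph is connected by convention), $x_{\A_1 \cap \A_2}$ is a legitimate variable, and the vertex identity $\chi^{\A_1}+\chi^{\A_2}=\chi^{\A_1 \cap \A_2}+\chi^{\A_1 \cup \A_2}$ is set-theoretic. For the underlined leading term, $\A_1 \cup \A_2$ is the largest of the four sets and hence $x_{\A_1\cup \A_2}$ is strictly the smallest of the four variables; in degrevlex this gives $x_{\A_1 \cap \A_2} x_{\A_1\cup \A_2} <_{\rev} x_{\A_1}x_{\A_2}$, identifying $x_{\A_1}x_{\A_2}$ as the leading term.

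The heart of the argument is the bijection. Call a monomial $x_{\A_1}\cdots x_{\A_k}$ \emph{standard} when no pair $(\A_i,\A_j)$ is simultaneously incomparable and has $G[\A_i\cup\A_j]$ connected. For a lattice point $p\in \mathbb{Z}_{\geq 0}^{\B}$ in the integer cone over $\CBP(G)$, the canonical standard monomial I propose collects, for every level $v\geq 1$, the connected components of the level set $L_v(p)\coloneqq\{B\in\B:p_B\geq v\}$. Each such component is a connected block-set, the sum of the corresponding incidence vectors equals $p$, and any two components from levels $v\leq w$ are either nested (hence comparable) or lie in distinct $L_v$-components, in which case their union in $G$ is disconnected since the path joining them in $\T$ runs through blocks of multiplicity $<v$. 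The main obstacle is uniqueness: any other standard multiset summing to $p$ must coincide with this canonical one. The key observation is that in any standard multiset, the sets containing a fixed block $B$ form a chain under inclusion (two such sets share $B$, so their union is connected and they must be comparable by standardness). Moreover, the $\subseteq$-maximal set $\A^*$ in this chain must equal the connected component $C$ of $\supp(p)$ through $B$: if there were a block $B^{\star}\in C\setminus \A^*$ adjacent in $\T$ to some $B'' \in \A^*$, then $B^\star$ would lie in a further set $\A_j$ of the multiset, and $\A^* \cup \A_j$ would be connected in $G$ (since $B''$ and $B^\star$ share a cut vertex) while $\A^*$ and $\A_j$ are incomparable, contradicting standardness. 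An induction on $\sum_B p_B$, removing at each stage one copy of such a maximal component, then identifies the multiset with the canonical one level by level.
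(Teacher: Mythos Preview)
Your proposal is correct and takes a genuinely different route from the paper. The paper argues directly: for a binomial $x^a-x^b\in I_{\CBP(G)}$ with leading term $x^a$ and $\gcd(x^a,x^b)=1$, it shows that if no initial term of $\G$ divides $x^a$ then the block-sets occurring in $x^a$, restricted to each component $K$ of $G[\B[x^a]]$, form a chain, so $x_{\B[K]}$ divides $x^a$ but not $x^b$; consequently every set appearing in $x^b$ is strictly contained in some set appearing in $x^a$, and degrevlex then forces $x^b$ to be the leading term, a contradiction. You instead establish a bijection between standard monomials and semigroup elements and conclude via equality of Hilbert functions. Both arguments rest on the same combinatorial core---in a standard monomial the block-sets meeting a fixed block are totally ordered by inclusion---but your approach has the bonus of exhibiting the standard monomials explicitly as level-set decompositions, which is useful structural information beyond what the paper's proof provides. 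Two small points worth tightening in your write-up: the bijection is really with pairs $(p,d)$ rather than $p$ alone, so your canonical monomial must be padded by a power of $x_\emptyset$ to land in degree $d$ (this is harmless since $\emptyset$ is comparable to every set and $x_\emptyset$ never occurs in a leading term of $\G$); and your assertion that $\A^*$ and $\A_j$ are incomparable tacitly uses that $\A^*\subsetneq\A_j$ is impossible, since then $B\in\A^*\subseteq\A_j$ would contradict the maximality of $\A^*$ among the sets containing $B$.
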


\begin{proof}
    Let $\tilde\B =\{\B_1,\ldots,\B_m\}$ be the set of all subsets of blocks inducing a connected subgraph. Let $x^a-x^b \in I_{\CBP(G)}$, $a,b \in \N^{\tilde\B}$, with $\init_{<_\rev}(x^a-x^b)=x^a$. We have to show that there exists $\A_1, \A_2 \in \tilde\B$, $x_{\A_1}x_{\A_2}-x_{\A_1 \cap \A_2}x_{\A_1 \cup \A_2} \in \G$ such that $x_{\A_1}x_{\A_2}$ divides $x^a$, since $\mathrm{in}_<(x_{\A_1}x_{\A_2}-x_{\A_1 \cap \A_2}x_{\A_1 \cup \A_2})=x_{\A_1}x_{\A_2}$. Without loss of generality we can assume that $\gcd(x^a,x^b)=1$. Let $\B[x^a] \coloneqq \bigcup_{a_{\B_i} \neq 0}\B_i$. Since $x^a-x^b \in I_{\CBP(G)}$, we have $\deg(x^a)=\deg(x^b)$ and $\B[x^a]=\B[x^b]$. In particular, $G[\B[x^a]]$ and $G[\B[x^b]]$ have the same connected components. Suppose that there is no element in $\G$ whose initial term divides $x^a$. If there were two sets of blocks $\A'_1, \A'_2 \in \tilde\B$ with $a_{\A'_1},a_{\A'_2} \neq 0$ such that $G[\A'_1 \cup \A'_2]$ is connected, and $\A'_1 \nsubseteq \A'_2$ and $\A'_2 \nsubseteq \A'_1$, then $x_{\A'_1}x_{\A'_2}-x_{\A'_1 \cap \A'_2}x_{\A'_1 \cup \A'_2} \in \G$ and $\init_{<_\rev}(x_{\A'_1}x_{\A'_2}-x_{\A'_1 \cap \A'_2}x_{\A'_1 \cup \A'_2})=x_{\A'_1}x_{\A'_2}$ divides $x^a$. Hence, assume that for all $\A'_1,\A'_2 \in \tilde\B$ with $a_{\A'_1},a_{\A'_2}\neq 0$ and $G[\A_1 \cup \A_2]$ connected, we have $\A'_1 \subseteq \A'_2$ or $\A'_2 \subseteq \A'_1$. Then we have $a_{\B[K]} \neq 0$ for every connected component $K$ of $G[\B[x^a]]=G[\B[x^a]]$ for the following reason. Suppose $a_{\B[K]}=0$ and let $\A \subseteq \B[K]$ be an inclusion-maximal set with $a_\A \neq 0$. By definition of $K$, for every block $B' \in \B[K]$, there has to be an $\A' \in \tilde{\B}$, with $a_{\A'} \neq 0$, that contains $B'$. In particular there is an $\A'$ with $a_{\A'} \neq 0$, that contains a block of $\B[K]\setminus \A$ that shares a common vertex with a block of $\A$. Hence, $G[\A \cup \A']$ is connected and this implies $\A \subsetneq \A'$ which is a contradiction to $\A \subseteq \B[K]$ being inclusion-maximal such that $a_\A \neq 0$. Since $\gcd(x^a,x^b)=1$, we have $b_{\B[K]}=0$ and since $G[\B[x^b]]$ has the same components, for every $\A \in \tilde\B$ with $b_\A \neq 0$ there exists an $\A' \in \tilde\B$ with $\A \subsetneq \A'$ and $a_{\A'} \neq 0$. But because of $\deg(x^a)=\deg(x^b)$ this means that $\mathrm{in}_<(x^a-x^b)=x^b$, which is a contradiction to the assumption that $x^a$ is the initial term of $x^a-x^b$.
    
    Since $\G$ is square free, it follows from \cite[Corollary 8.9]{Sturmfels} that $\CBP(G)$ has a regular unimodular triangulation.
\end{proof}

\begin{theorem}
    \label{gorenstein}
    Let $G$ be a connected graph and $\B$ its set of blocks. Then the connected blocks polytope $\CBP(G)$ is Gorenstein of index $2$.
\end{theorem}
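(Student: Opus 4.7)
The plan is to use the facet description from \Cref{completefacets} together with \Cref{sum=1} to show directly that $2\,\CBP(G)$ is reflexive, by exhibiting an interior lattice point and verifying that translating to this point produces a polytope of the form $\{x : Ax \le \mathbf{1}\}$ with integer $A$.

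First, I would identify the candidate interior lattice point of $2\,\CBP(G)$, namely $\mathbf{1} = \chi^{\B}$. Since $G$ itself is connected, $\mathbf{1}$ is a vertex of $\CBP(G)$, hence a lattice point of $2\,\CBP(G)$. To see $\mathbf{1}$ lies strictly inside $2\,\CBP(G)$, I would check all facet inequalities of $2\,\CBP(G)$ from \Cref{completefacets}: the inequalities $x_B \ge 0$ become $1 > 0$ strictly, and the inequality $\sum_{B\in\B}\alpha_B x_B \le 2$ evaluated at $\mathbf{1}$ gives $\sum_{B\in\B}\alpha_B = 1 < 2$ by \Cref{sum=1}. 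Thus $\mathbf{1}$ is an interior lattice point of $2\,\CBP(G)$.

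Next, I would consider the translate $\mathcal{Q} := 2\,\CBP(G) - \mathbf{1}$. Substituting $y = x + \mathbf{1}$ into the facet description of $2\,\CBP(G)$ rewrites each box inequality $y_B \ge 0$ as $-x_B \le 1$, and each independent blocks inequality $\sum_B \alpha_B y_B \le 2$ as
\[
\sum_{B\in\B}\alpha_B x_B \;\le\; 2 - \sum_{B\in\B}\alpha_B \;=\; 2-1 \;=\; 1,
\]
using \Cref{sum=1} once more. All the coefficients on the left-hand sides are integers (the $\alpha_B$ are integers by definition of independent blocks inequalities), and all right-hand sides equal $1$. Hence $\mathcal{Q}$ has the form $\{x : Ax \le \mathbf{1}\}$ with $A$ integer, so $\mathcal{Q}$ is reflexive by the characterization recalled at the start of this section.

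Finally, since $\mathcal{Q}$ is a lattice translate of $2\,\CBP(G)$ by the lattice vector $-\mathbf{1}$, the polytope $2\,\CBP(G)$ itself is reflexive, and therefore $\CBP(G)$ is Gorenstein of index $2$. I do not expect any real obstacle: the content of the argument is entirely concentrated in the identity $\sum_{B}\alpha_B = 1$ (\Cref{sum=1}), which forces the slack between the right-hand side $2$ of the dilated inequalities and the value at $\mathbf{1}$ to be exactly $1$, giving the integer right-hand sides needed for reflexivity after translation.
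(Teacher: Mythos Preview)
Your proposal is correct and follows essentially the same approach as the paper: use the facet description from \Cref{completefacets}, translate $2\,\CBP(G)$ by $-\mathbf{1}$, and invoke \Cref{sum=1} to see that every facet inequality becomes integral with right-hand side $1$. The only difference is that you additionally verify explicitly that $\mathbf{1}$ is an interior lattice point of $2\,\CBP(G)$, which the paper omits since the form $\{x : Ax \le \mathbf{1}\}$ with integer $A$ already suffices for reflexivity.
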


\begin{proof}
    We have to show that the dual polytope of $2 \cdot \CBP(G)- \mathbf{1}$ is a lattice polytope. By \Cref{completefacets} we know that all inequalities defining $\CBP(G)$ are either of the form $x_B \ge 0$ for some $B \in \B$ or independent blocks inequalities. Then the inequalities defining $2 \cdot \CBP(G)$ are either of the form $x_B \ge 0$ or $\sum_{B \in \B}\alpha_Bx_B \le 2$, where there exists an independent set of blocks $\I \subseteq \B$ such that $(\I,\alpha)$ defines an independent blocks inequality. Thus the inequalities describing $2 \cdot \CBP(G)-\mathbf{1}$ are of the form $x_B+1 \ge 0$ and $\sum_{B \in \B}\alpha_B(x_B+1) \le 2$.
    The first inequality is equivalent to $-x_B \le 1$ and the second inequality is equivalent to $\sum_{B \in \B}\alpha_Bx_B \le 1$, since $\sum_{B \in \B} \alpha_B=1$. Since all coefficients are integral, this shows that $2 \cdot \CBP(G)$ is a reflexive polytope by \cite[Theorem 2.11]{Ziegler}.
\end{proof}

It is well known due to Stanley that a result like \Cref{gorenstein} is equivalent to the following statement.

\begin{corollary}
    Let $G$ be a connected graph and $\B$ its set of blocks. The $h^\ast$-vector of $\CBP(G)$ is of the form $(h_0^\ast,h_1^\ast,h_2^\ast,\ldots,h_{n-1}^\ast,h_n^\ast)$, where $n=\vert \B \vert$,
    \begin{align*}
        h_n^\ast&=0, \text{ and} \\
        h^*_i&=h^\ast_{n-i-1} \text{ for } i=0,\ldots,\lfloor \frac{n}{2}\rfloor.
    \end{align*}
\end{corollary}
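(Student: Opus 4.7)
The plan is to invoke Stanley's characterization of Gorenstein lattice polytopes (cited as \cite{Hib} earlier in the section) to translate \Cref{gorenstein} directly into the claimed symmetry of the $h^\ast$-vector. Recall from the preliminaries that a $d$-dimensional lattice polytope $\P$ is Gorenstein of index $k$ if and only if
\[
h^\ast(\P, t) = t^{d+1-k}\, h^\ast(\P, 1/t).
\]

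First I would note that $\dim(\CBP(G)) = n$ by \Cref{dimension}, while \Cref{gorenstein} gives $k = 2$. Substituting these into Stanley's identity yields
\[
h^\ast(\CBP(G), t) \;=\; t^{n-1}\, h^\ast(\CBP(G), 1/t).
\]

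Next I would compare coefficients. Writing $h^\ast(\CBP(G), t) = \sum_{i=0}^{n} h_i^\ast t^i$, the right-hand side expands to $\sum_{i=0}^{n} h_i^\ast t^{n-1-i}$, whose monomials range over exponents $-1, 0, \ldots, n-1$. Matching the coefficient of $t^j$ on both sides gives $h_j^\ast = h_{n-1-j}^\ast$ for each $j \in \{0, 1, \ldots, n-1\}$, while the coefficient of $t^n$ on the right is $0$, forcing $h_n^\ast = 0$. Since the symmetry $h_j^\ast = h_{n-1-j}^\ast$ pairs indices around $(n-1)/2$, it suffices to record it for $j = 0, \ldots, \lfloor n/2 \rfloor$, which is exactly the stated form.

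There is no real obstacle here; the corollary is a direct coefficient-extraction from Stanley's equivalence once \Cref{gorenstein} is in place. The only thing to double-check is the index bookkeeping (that $d+1-k = n-1$ and that the $t^n$-coefficient on the right vanishes), which accounts for both the symmetry and the top vanishing $h_n^\ast = 0$.
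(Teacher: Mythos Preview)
Your proposal is correct and is exactly the approach the paper intends: the corollary is stated with no separate proof, just the remark that by Stanley's result \Cref{gorenstein} is equivalent to the claimed symmetry, and you have simply written out the coefficient comparison that makes this explicit.
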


Furthermore, Theorem 1 of \cite{BR} implies that the $h^\ast$-vector of a Gorenstein lattice polytope with a regular unimodular triangulation is unimodal. Hence, from \Cref{grobner,gorenstein} we get:

\begin{corollary}
    Let $G$ be a connected graph. The $h^\ast$-vector of $\CBP(G)$ is unimodal.
\end{corollary}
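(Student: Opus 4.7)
The plan is to derive this as a direct consequence of the two preceding theorems in this section, invoking a classical Ehrhart-theoretic result of Bruns and Römer. Specifically, I would combine \Cref{grobner}, which provides a squarefree Gröbner basis and hence a regular unimodular triangulation of $\CBP(G)$, with \Cref{gorenstein}, which tells us that $\CBP(G)$ is Gorenstein of index $2$. Together these are exactly the hypotheses required by Theorem~1 of \cite{BR}.

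First I would recall the statement of the Bruns--Römer theorem: if a lattice polytope $\Pc$ is Gorenstein and admits a regular unimodular triangulation, then its $h^\ast$-vector is unimodal. I would then simply observe that both hypotheses have already been established for $\CBP(G)$ in the preceding two theorems, so the conclusion is immediate. The proof is thus purely a citation argument and requires no further computation.

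Since this step is genuinely a corollary, there is no serious obstacle; the only thing to be careful about is that the cited Bruns--Römer result is stated for general Gorenstein lattice polytopes rather than, say, only reflexive ones, so it applies to our index-$2$ Gorenstein polytope without modification. The Gorenstein index does not enter the hypothesis of the Bruns--Römer unimodality theorem, so the fact that \Cref{gorenstein} gives index $2$ (rather than $1$) causes no issue.
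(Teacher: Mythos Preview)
Your proposal is correct and matches the paper's own argument exactly: the paper also deduces the corollary directly from \Cref{grobner} and \Cref{gorenstein} via Theorem~1 of \cite{BR}. There is nothing to add.
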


\Cref{Narayana} shows the explicit description of the $h^\ast$-vector for the connected blocks polytope of a block path of length $n$, which is given by the $n$-th Narayana polynomial. To this end we define:

\begin{definition}
    For $n \in \N_{>0}$ and $1 \le k \le n$ the Narayana number $N(n,k)$ is defined by
    \begin{align*}
        N(n,k)\coloneqq \frac{1}{n}\binom{n}{k}\binom{n}{k-1}.
    \end{align*}
    The $n$-th Narayana polynomial is given by
    \begin{align*}
        N_n(x) \coloneqq \sum_{k=1}^n N(n,k)x^{k-1}.
    \end{align*}
\end{definition}

\begin{theorem}
    \label{Narayana}
    Let $G'$ be a block path of length $n$. Then the $h^\ast$-polynomial of $\CBP(G')$ is given by the $n$-th Narayana polynomial.
\end{theorem}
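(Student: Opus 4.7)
My plan is to exploit the regular unimodular triangulation $\Delta$ of $\CBP(G')$ from \Cref{grobner}, so that $h^\ast(\CBP(G'), t) = h(\Delta, t)$ and the task reduces to computing the $h$-polynomial of $\Delta$. Let $B_1, \ldots, B_n$ be the blocks of $G'$; the nonempty connected block-subsets are exactly the intervals $[i, j] \coloneqq \{B_i, \ldots, B_j\}$. Reading off the initial ideal in \Cref{grobner}, two intervals $[a, b]$ and $[c, d]$ with $a \le c$ may coexist in a face of $\Delta$ if and only if either they are nested ($d \le b$) or gap-separated ($c \ge b + 2$); all adjacent and properly overlapping pairs are forbidden. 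Since $\mathbf{0}$ is compatible with every interval, it acts as a cone vertex: letting $\Delta'_n$ be the subcomplex on the interval-vertices only, one has $f(\Delta, t) = (1 + t)\, f(\Delta'_n, t)$, whence $h(\Delta, t) = h(\Delta'_n, t)$.

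Next I compute the generating function $G(x, t) \coloneqq \sum_{n \ge 0} f(\Delta'_n, t)\, x^n$ via a forest decomposition. Every face $V \in \Delta'_n$ has a canonical structure: the maximal intervals of $V$ (its ``roots'') are pairwise gap-separated, and inside each root $[a, b]$ the remaining intervals of $V$ form a face of $\Delta'_{b - a + 1}$ containing $[a, b]$. Writing $A(x, t)$ for the generating function of rooted collections (those whose single root is the top interval), I obtain $A = t\,(G - A - 1)$, because a rooted collection on a path of length $m \ge 1$ is exactly a collection not containing the top with the top interval added (contributing a factor of $t$). Decomposing a global collection as a possibly-empty left end-gap, a sequence of roots separated by obligatory one-block gaps, and a possibly-empty right end-gap, gives
\[
G(x, t) = \frac{1 + A(x, t)}{1 - x\,(1 + A(x, t))},
\]
and eliminating $A$ yields the quadratic $x t\, G^2 - (1 - x)\, G + 1 = 0$.

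Finally, I match this with the classical Narayana generating function. Let $N_n(t)$ denote the $n$-th Narayana polynomial and set $M(y, u) \coloneqq \sum_{n \ge 1} N_n(u)\, y^n$; it is well known that $M$ satisfies $M = y\,(1 + M)(1 + u M)$. Substituting $u = t/(1 + t)$ and $y = x\,(1 + t)$, a short calculation shows that $1 + M$ satisfies exactly the quadratic derived for $G$, whence $G(x, t) = 1 + M\bigl(x(1 + t),\, t/(1 + t)\bigr)$. Extracting the coefficient of $x^n$ gives $f(\Delta'_n, t) = (1 + t)^n\, N_n\bigl(t/(1 + t)\bigr) = \sum_{k = 1}^n N(n, k)\, t^{k - 1} (1 + t)^{n - k + 1}$, which coincides with the expansion $\sum_i h_i\, t^i (1 + t)^{n - i}$ exactly when $h_{k - 1} = N(n, k)$. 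Hence $h^\ast(\CBP(G'), t) = h(\Delta'_n, t)$ is the desired Narayana polynomial.

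The main obstacle I anticipate is the combinatorial book-keeping in the forest decomposition: the forced one-block gap between consecutive roots, the two possibly-empty end-gaps, and the split between the rooted sub-collection inside each root and the remainder must all be combined correctly. Once the functional equation for $G$ is established, matching it to the Narayana equation is a clean algebraic substitution.
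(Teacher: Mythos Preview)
Your argument is correct and takes a genuinely different route from the paper's proof. The paper proceeds by exhibiting an explicit unimodular map $\varphi(\chi^{\{B_i\}})=\mathbf e_i-\mathbf e_{i+1}$ from $\CBP(G')$ onto Braun's positive root polytope $\P_{n+1}=\conv\{\mathbf 0,\mathbf e_i-\mathbf e_j\mid i>j\}$ and then simply cites Braun's computation of $h^\ast(\P_m)$. Your approach instead stays entirely inside the paper: you use the squarefree initial ideal from \Cref{grobner}, recognise the resulting flag complex on intervals of $[n]$ (nested or gap-separated pairs), cone off the vertex $\mathbf 0$, and extract the $h$-polynomial via a forest decomposition and the Narayana functional equation. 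This is more laborious but fully self-contained, and it gives a transparent combinatorial explanation for why Narayana numbers appear. Both the recursion $A=t(G-A-1)$ and the root-and-gap decomposition leading to $G=(1+A)/(1-x(1+A))$ are set up correctly; the resulting quadratic $xt\,G^2-(1-x)G+1=0$ and the substitution matching it to $M=y(1+M)(1+uM)$ check out.

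One point you should make explicit: your computation yields $h_{k-1}=N(n,k)$, i.e.\ the $n$-th Narayana polynomial, whereas the theorem as stated says $(n+1)$-st. Do not gloss this over with ``the desired Narayana polynomial''. A quick sanity check shows your index is the right one: for $n=2$ the polytope is the unit square with $h^\ast=1+t=N_2$, not $N_3=1+3t+t^2$. So the off-by-one lies in the theorem statement (presumably in how Braun's indexing is quoted), not in your derivation; you should flag this rather than silently claim agreement.
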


\begin{proof}
    Let $\B=\{B_1,\ldots,B_n\}$ be the set of blocks of $G'$, where $B_i$ and $B_{i+1}$ have a common vertex for $i \in [n-1]$. In \cite{Braun15}, Braun showed that the $h^\ast$-polynomial of
    \begin{align*}
        \P_n \coloneqq \conv\{\mathbf{0},\textbf{e}_i-\textbf{e}_j \mid n \ge i > j \ge 1\},
    \end{align*}
    where $\textbf{e}_i$ denotes the $i$-th standard unit vector, is given by the $n-1$-st Narayana polynomial (in \cite{Braun15} it is claimed that $h^\ast(\P_n,z)=N_n(z)$, however, the correct statement is $h^\ast(\P,z)=N_{n-1}(z)$). We will show that $\CBP(G')$ and $\P_{n+1}$ are unimodular equivalent. Our connected blocks polytope is given by
    \begin{align*}
        \CBP(G')=\conv\{\mathbf{0}, \chi^{\{B_k,\ldots,B_\ell\}} \mid 1 \le k \le \ell \le n\}.
    \end{align*}
    $\P_{n+1}$ lies in the $n$-dimensional subspace $V \coloneqq \{x \in \R^{n+1} \mid \sum_{i=1}^n x_i=0\}$. We define the linear map $\varphi \colon \R^\B \to V$ by
    \begin{align*}
        \varphi(\chi^{\{B_i\}})\coloneqq \textbf{e}_{i+1}-\textbf{e}_i,
    \end{align*}
    for all $i \in [n]$. Then $\varphi$ is a unimodular transformation that maps $\CBP(G')$ to $\P_{n+1}$. Thus, $\CBP(G')$ and $\P_{n+1}$ have the same $h^\ast$-polynomial.
    \end{proof}

We finish by giving a lower bound for $h_1^\ast$:

\begin{theorem}
    \label{gamma}
    Let $G$ be a connected graph and $\B$ its set of blocks. Then,
    \begin{align*}
        h_1^\ast \ge \dim \CBP(G)-1
    \end{align*}
    and $h^\ast_1=\dim \CBP(G)-1$ if and only if $\vert \B \vert \le 2$.
\end{theorem}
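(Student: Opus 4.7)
The plan is to reduce the inequality to a combinatorial count of connected block subsets and then construct enough of them explicitly. Since $\CBP(G) \subseteq [0,1]^\B$, every lattice point of $\CBP(G)$ is a $0/1$-vector and hence coincides with some characteristic vector $\chi^\A$ for $\A \subseteq \B$ with $G[\A]$ connected. The standard Ehrhart-theoretic identity $h_1^\ast = \vert \CBP(G) \cap \Z^\B \vert - (\dim \CBP(G) + 1)$, combined with \Cref{dimension}, therefore gives
\[
    h_1^\ast = c(G) - \vert \B \vert - 1, \qquad \text{where } c(G) \coloneqq \vert \{\A \subseteq \B \mid G[\A] \text{ connected}\} \vert.
\]
Hence the claimed inequality $h_1^\ast \ge \dim \CBP(G) - 1$ is equivalent to $c(G) \ge 2 \vert \B \vert$, and the equality characterization translates accordingly.

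For the lower bound, set $n \coloneqq \vert \B \vert$ and pick an ordering $B_1, \ldots, B_n$ of $\B$ such that $\{B_1, \ldots, B_i\}$ induces a connected subgraph of $G$ for every $i \in [n]$; such an ordering exists by a DFS on the ``block graph'' whose vertices are the blocks and whose edges connect pairs of blocks sharing a cut vertex (this graph is connected because $G$ is). Then the $2n$ subsets
\[
    \emptyset,\ \{B_1\},\ \{B_1,B_2\},\ \ldots,\ \{B_1,\ldots,B_n\},\ \{B_2\},\ \{B_3\},\ \ldots,\ \{B_n\}
\]
are pairwise distinct and each induces a connected subgraph of $G$, giving $c(G) \ge 2n$.

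For the equality characterization in the cases $\vert \B \vert \le 2$, I would just compute directly: if $n = 1$ then trivially $c(G) = 2$, and if $n = 2$ the two blocks necessarily share a common cut vertex, so every subset of $\B$ is connected and $c(G) = 4$. In both cases $c(G) = 2n$ and hence $h_1^\ast = n - 1$.

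For $n \ge 3$, I would exhibit an additional connected subset to obtain strict inequality. Choose $B_1$ to be a leaf block of the block-cut tree $\T$; such a block exists since leaves of $\T$ are always blocks (cut vertices have degree at least two in $\T$) and $n \ge 2$. Extend $B_1$ to an ordering as above. The tree $\T - B_1$ is still connected and contains at least two block-vertices, so it must contain some cut vertex adjacent to two distinct blocks $B, B' \in \B \setminus \{B_1\}$. The two-element set $\{B, B'\}$ is connected in $G$, does not contain $B_1$, and is thus distinct from every subset in the list above, giving $c(G) \ge 2n+1$ and hence $h_1^\ast \ge n > n-1$. The only mildly technical point is ensuring that the prefix-connected ordering can be chosen to begin at a prescribed leaf block of $\T$, but this is routine because the block graph is connected and any vertex can serve as a DFS root.
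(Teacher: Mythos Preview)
Your proof is correct and follows essentially the same approach as the paper: both arguments use the identity $h_1^\ast=|\CBP(G)\cap\Z^{\B}|-(\dim\CBP(G)+1)$, exhibit the same list of $2n$ connected block sets (the empty set, the singletons, and the prefixes of a connectivity-preserving ordering), and verify equality for $|\B|\le 2$ directly. The only cosmetic difference is in the $n\ge 3$ step: the paper simply observes that $B_3$ must share a cut vertex with $B_1$ or $B_2$ (since $\{B_1,B_2,B_3\}$ is connected), yielding the extra connected pair $\{B_1,B_3\}$ or $\{B_2,B_3\}$ without needing to restart the ordering at a leaf block; your argument achieves the same conclusion with a bit more setup.
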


\begin{proof}
    Let $V(\CBP(G))$ denote the set of vertices of $\CBP(G)$. We have
    \begin{align*}
        h^\ast_1=\vert V(\CBP(G)) \vert -(\dim(\CBP(G))+1).
    \end{align*}
    Hence, we count the vertices of $\CBP(G)$. Let $\B=\{B_1,\ldots,B_n\}$. Without loss of generality we can assume that $G[\{B_1,\ldots,B_i\}]$ is connected for all $i \in [n]$. Then the incidence vectors $\chi^{\{B_1, \ldots, B_i\}}$ yield $\vert \B \vert$ many vertices of $\CBP(G)$. Additionally, there are the vertices $\chi^\emptyset$ and $\chi^B$ for all $B \in \B \backslash \{B_1\}$. Due to \Cref{dimension}, $\vert V(\CBP(G))\vert \ge 2 \cdot \dim(\CBP(G))$ and it follows $h_1^\ast \ge \dim \CBP(G)-1$. For $\vert \B \vert \le 2$ it is clear that these are all the vertices of $\CBP(G)$, which shows $h_1^\ast = \dim \CBP(G)-1$ if $\vert \B \vert \le 2$. For $\vert \B \vert \ge 3$ we know that $B_3$ shares a common node with $B_1$ or $B_2$ by choice of the numbering of $B_1,\ldots,B_n$. Hence, $\chi^{\{B_1 \cup B_3\}}$ or $\chi^{\{B_2 \cup B_3\}}$ is another vertex of $\CBP(G)$. Hence, $h_1^\ast > \dim \CBP(G)-1$.
\end{proof}

For a palindromic $h^\ast$-polynomial the so-called $\gamma$-vector is defined by the coefficients of the $h^\ast$-polynomial after a specific change of basis in the polynomial ring $\R[x]$, and we have $\gamma_1=h^\ast_1-d$, where $d$ denotes the degree of the $h^\ast$-polynomial. For a connected graph $G$, it directly follows from \Cref{gamma} that the $\gamma$-vector of $\CBP(G)$ has the property that $\gamma_1 \ge 0$ and $\gamma_1=0$ if and only if $\vert \B \vert \le 2$, where $\B$ denotes the set of blocks of $G$.

\bibliographystyle{alpha}
\bibliography{bibliography}

Data sharing is not applicable to this article as no new data were created or analyzed in this study.

\end{document}